\theoremstyle{plain}
\newtheorem{theorem}{Theorem}[section]
\newtheorem*{thm:chfunction}{Theorem~\ref{thm:chfunction}}
\newtheorem*{thm:chfunctionext}{Theorem~\ref{thm:chfunctionext}}
\newtheorem*{thm:bothbounds}{Theorem~\ref{thm:bothbounds}}
\newtheorem*{thm:bothboundslinks}{Theorem~\ref{thm:bothboundslinks}}
\newtheorem{cor}[theorem]{Corollary}
\newtheorem{conj}[theorem]{Conjecture}
\newtheorem{lemma}[theorem]{Lemma}
\newtheorem{question}[theorem]{Question}
\newtheorem*{cor:tnoffiberedlinks}{Corollary~\ref{cor:tnoffiberedlinks}}
\newtheorem*{conj:gmn=g}{Conjecture~\ref{conj:gmn=g}}
\theoremstyle{definition}
\newtheorem{remark}[theorem]{Remark}
\newtheorem{observation}[theorem]{Observation}
\newcommand{\hatF}{\ensuremath{{\widehat{F}}}}
\newcommand{\calA}{\ensuremath{{\mathcal A}}}
\newcommand{\calT}{\ensuremath{{\mathcal T}}}
\newcommand{\calD}{\ensuremath{{\mathcal D}}}
\newcommand{\calH}{\ensuremath{{\mathcal H}}}
\newcommand{\comment}[1]{}
\newcommand{\bdry}{\ensuremath{\partial}}
\newcommand{\nbhd}{\ensuremath{\mathcal{N}}}
\newcommand{\N}{\ensuremath{\mathbb{N}}}
\newcommand{\Q}{\ensuremath{\mathbb{Q}}}
\newcommand{\R}{\ensuremath{\mathbb{R}}}
\newcommand{\Z}{\ensuremath{\mathbb{Z}}}
\newcommand{\MN}{{\ensuremath{\rm{MN}}}}
\newcommand{\TN}{{\ensuremath{\rm{TN}}}}
\newcommand{\cut}{\ensuremath{\backslash}}
\newcommand{\dcsum}{\ensuremath{\mathbin{\rotatebox[origin=c]{-30}{$\asymp$}}}} 
\definecolor{amaranth}{rgb}{0.9, 0.17, 0.31} 
\definecolor{carrotorange}{rgb}{0.93, 0.57, 0.13} 
\definecolor{citrine}{rgb}{0.89, 0.82, 0.04} 
\definecolor{dartmouthgreen}{rgb}{0.05, 0.5, 0.06} 
\definecolor{ballblue}{rgb}{0.13, 0.67, 0.8} 
\definecolor{ceruleanblue}{rgb}{0.16, 0.32, 0.75} 
\definecolor{amethyst}{rgb}{0.6, 0.4, 0.8} 
\definecolor{amber}{rgb}{1.0, 0.75, 0.0} 
\definecolor{burlywood}{rgb}{0.87, 0.72, 0.53} 
\title{Morse-Novikov numbers, tunnel numbers, and handle numbers of sutured manifolds}
\author{Kenneth L. Baker}
\address{Department of Mathematics, University of Miami, Coral Gables, FL 33146, USA}
\email{k.baker@math.miami.edu}
\author{Fabiola Manjarrez-Guti\'errez}
\address{Instituto de Matem\'aticas, Universidad Nacional Aut\'onoma de Mexico, Cuernavaca, Mor., MEXICO}
\email{fabiola.manjarrez@im.unam.mx}
\subjclass[2020]{57K10, 57K35, 57K99}  
\keywords{sutured manifolds, handle number, Heegaard splittings, Morse-Novikov number, tunnel number}
\begin{document}

\begin{abstract}

Developed from geometric arguments for bounding the Morse-Novikov number of a link in terms of its tunnel number, we obtain upper and lower bounds on the handle number of a Heegaard splitting of a sutured manifold $(M,\gamma)$ in terms of the handle number of its decompositions along a surface representing a given 2nd homology class. 
Fixing the sutured structure $(M,\gamma)$, this leads us to develop the handle number function $h \colon H_2(M,\bdry M;\R) \to \N$ which is bounded, constant on rays from the origin, and locally maximal.  Furthermore, for an integral class $\xi$, $h(\xi)=0$ if and only if the decomposition of $(M,\gamma)$ along some surface representing $\xi$ is a product manifold.
\end{abstract}

\maketitle

\tableofcontents

\section{Introduction}

Let $L$ be an oriented link in $S^3$ with Morse-Novikov number $\MN(L)$ and tunnel number $\TN(L)$.  
Letting $M= S^3 \cut \nbhd(L)$ be the link exterior, $2\TN(L)+2$  counts the minimum number of critical points of a Morse function on $M$ for which $\bdry M$ is the maximal level while
 $\MN(L)$ counts the minimum number of critical points among regular circle valued Morse functions on $M$ dual to the meridional class.
In \cite{P} A. Pajitnov proved that $\MN(L)\leq 2\TN(L)$ using  a perturbation argument to construct a circle-valued Morse map from a given real valued Morse function on the link exterior.
Discussions about a constructive geometric argument for this upper bound on $\MN(L)$ led to developing a lower bound and further generalizations for sutured manifolds.

\subsection{Bounds between linear and circular Heegaard splittings of sutured manifolds}
For our constructive geometric arguments, we shift from  Morse functions and circular Morse functions to handle decompositions, Heegaard splittings, and circular Heegaard splittings where we count handles instead of critical points.  

Given a connected sutured manifold $(M,\gamma)$ without toroidal sutures, a Heegaard surface is a connected decomposing surface $S$ for which $\bdry S$ is the union of the cores of the annular sutures $A(\gamma)$ so that $(M,\gamma)$ decomposed along $S$ yields a pair of connected compression bodies for which $S$ is the ``thick'' surface and any annular suture is a vertical boundary component. The surface $S$ defines a {\em linear} Heegaard splitting of $(M,\gamma)$.  

More generally, for a ``thin'' decomposing surface $F$ for $(M,\gamma)$ that meets every toroidal suture, let the ``thick'' surface $S$ be the union of a Heegaard surface for each component of the decomposition of $(M,\gamma)$ by $F$.  Then together $F$ and $S$ define a {\em circular} Heegaard splitting of $(M,\gamma)$.  Note that a linear Heegaard splitting is simply a circular Heegaard splitting with no thin surface, $F=\emptyset$.

Dropping $\gamma$ from the notation,  the {\em handle number} $h(M,F,S)$ of the splitting of $(M,\gamma)$ with the thin and thick surfaces $F$ and $S$ counts the minimum number of handles needed to create the compression bodies in the decomposition of $(M,\gamma)$ first along $F$ and then along $S$.   Then we define the handle number $h(M,\gamma,\xi)$ of a class $\xi \in H_2(M, \bdry M)$, to be the minimum of $h(M,F,S)$ among circular Heegaard splittings for which $[F]=\xi$.  Of course this is only defined for classes $\xi$ represented by decomposing surfaces that meet every toroidal suture, and we say such classes are {\em regular}.  This regularity is a kind of ``non-zero linking number''  condition.

When $(M,\gamma)$ has no toroidal sutures, every class is regular.  In particular $\xi=0$ is a regular class and the linear handle number, $h(M,\gamma, 0)$, is realized by a linear Heegaard splitting with $F = \emptyset$.

Moreover, as we require neither the thin surface $F$ nor the result of the decomposition of $(M,\gamma)$ along $F$ to be connected, our circular Heegaard splittings $(M,F,S)$ may actually be what are commonly called generalized Heegaard splittings or generalized circular Heegaard splittings according to whether or not the homology class of $F$ is trivial. For simplicity, we will just call them all Heegaard splittings, using the adjectives {\em linear}, {\em circular}, and {\em generalized} for emphasis where useful.  Indeed, note that a Heegaard splitting $(M,F,S)$ in which $[F]$ is a non-primitive homology class is necessarily a generalized circular Heegaard splitting.

Finally, we also define a sort of Euler characteristic complexity $\chi^{h}(M,\gamma,\xi)=\chi^{h}_{(M,\gamma)}(\xi)$ that measures the minimum ``size'' of thin surface needed in a Heegaard splitting that realizes the handle number of the class $\xi$.  
This complexity arises naturally when we bound the circular handle number from below by the linear handle number.

\begin{thm:bothbounds}
Suppose $(M, \gamma)$ is a sutured manifold  without toroidal sutures.
Then for any $\xi \in H_2(M,\bdry M)$ we have
\[ h(M,\gamma, \xi) \leq h(M,\gamma,0)-2d  \leq h(M,\gamma, \xi) + 2\chi^{h}_{(M,\gamma)}(\xi)\]
where $d=0,1,2$ depending on whether neither, just one, or both of $R_+(\gamma)$ and $R_-(\gamma)$ are empty.
\end{thm:bothbounds}

\begin{proof}
Theorem~\ref{thm:lineartocircular} develops the first inequality
while
Theorem~\ref{thm:generallowerbound} develops the second.
\end{proof}

As an application, let $(M,\gamma_+)$ be the sutured manifold where $M=S^3 \cut \nbhd(L)$ is the exterior of an oriented link $L$ in $S^3$ with $R_+(\gamma_+) = \bdry M$.  Let $\xi \in H_2(M,\bdry M)$ be the class of a Seifert surface for $L$.  Then $\MN(L) = h(M,\gamma_+, \xi)$ while $2\TN(L)+2 = h(M,\gamma_+,0)$.   Since $R_+(\gamma_+) = \bdry M \neq \emptyset$ while $R_-(\gamma_+) = \emptyset$, the bound 
\[h(M,\gamma, \xi) \leq h(M,\gamma,0)-2d \]
of Theorem~\ref{thm:bothbounds} reduces to Pajitnov's bound 
$\MN(L) \leq 2\TN(L)$, \cite{P}.
For the lower bound on $\MN(L)$ in this context, let us define the {\em Morse-Novikov genus} $g_{\MN}(L)$ to be the minimum genus among (possibly disconnected) Seifert surfaces $F$ for $L$ that are the thin surface of a Heegaard splitting realizing $\MN(L)=h(M,\gamma_+,\xi)$.
It turns out that in this situation $\chi^{h}(M,\gamma_+,\xi)$ is just $2g_{\MN}(L)$. Hence Theorem~\ref{thm:bothbounds} becomes
\begin{thm:bothboundslinks}
For an oriented link $L$ in $S^3$,   
\[ \MN(L) \leq 2\TN(L) \leq \MN(L) + 4g_{\MN}(L).\]
\end{thm:bothboundslinks}

\begin{cor:tnoffiberedlinks}
If $L$ is a fibered link, then $\TN(L) \leq 2g(L)$.
\end{cor:tnoffiberedlinks}

\begin{proof}
If $L$ is fibered, then $\MN(L) = 0$ and $g_\MN(L) = g(L)$ since there is a unique incompressible Seifert surface.
Then Theorem~\ref{thm:bothboundslinks} implies the result.
\end{proof}

One may expect that $g_{\MN}(L) = g(L)$, that the Morse-Novikov genus of an oriented link $L$ is realized by a minimal genus Seifert surface. However it is not so clear.  Nevertheless, by \cite{Baker-MN}, we do know the Morse-Novikov genus of a knot is realized by an incompressible Seifert surface.  That proof also extends to oriented links allowing for disconnected Seifert surfaces.
\begin{conj:gmn=g}[Cf. {\cite[Remark p440]{CTPforKnots}}]
For an oriented link $L$ in $S^3$, $g_{\MN}(L) = g(L)$.
\end{conj:gmn=g}

In Section~\ref{sec:handlecomplexities}, 
Question~\ref{ques:handlecharvsthurstonnorm} asks about similar relationships for more general sutured manifolds and homology classes.

\subsection{Handle indices}

Theorem~\ref{thm:bothbounds} has a somewhat cleaner statement using  {\em handle indices} rather than {\em handle numbers}.  Instead of adding up the handles of all dimensions, the handle index counts the number of $1$ and $2$ handles minus the number of $0$ and $3$ handles.  Effectively, the handle index $j(M,F,S)$ of a splitting is the handle number $h(M,F,S)$ minus twice the number of handlebodies in the decomposed manifold. It turns out that at most one $0$--handle and one $3$--handle is used in splittings realizing $h(M,\gamma, \xi)$, corresponding directly to whether $R_-(\gamma) = \emptyset$ or $R_+(\gamma)=\emptyset$; see Lemma~\ref{lem:compressionbodymeridianandverticaldecomp}.  Consequently $j(M,\gamma,\xi) = h(M,\gamma,\xi)$ for all $\xi\neq 0$ while $j(M,\gamma,0) = h(M,\gamma, 0)-2d$ for $d$ as in Theorem~\ref{thm:bothbounds}.  Hence Theorem~\ref{thm:bothbounds} may be rewritten as:
\begin{cor}\label{cor:handleindexbounds}
Suppose $(M,\gamma)$ is a sutured manifold without toroidal sutures.  Then for any $\xi \in H_2(M,\bdry M)$ we have
\[ j(M,\gamma, \xi) \leq j(M,\gamma,0) \leq j(M,\gamma,\xi) + 2\chi^{h}(M,\gamma,\xi) \]
\end{cor}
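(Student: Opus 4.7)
My plan is to derive Corollary~\ref{cor:handleindexbounds} as a direct substitution into Theorem~\ref{thm:bothbounds}, using the definition of handle index $j(M,F,S) = h(M,F,S) - 2(\text{number of handlebodies})$. In this formulation the actual work lives in identifying when handlebodies appear in an optimal splitting, which the paragraph preceding the statement attributes to Lemma~\ref{lem:compressionbodymeridianandverticaldecomp}.

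First, I would record the two identities that link $j$ and $h$ at the level of classes. Applying Lemma~\ref{lem:compressionbodymeridianandverticaldecomp} to a splitting realizing $h(M,\gamma,\xi)$, one may assume at most one $0$--handle and at most one $3$--handle is used, with a $0$--handle (respectively $3$--handle) appearing only when some compression body inherits no negative (resp.\ positive) boundary from $F \cup R_-(\gamma)$ (resp.\ $F \cup R_+(\gamma)$). When $\xi \neq 0$ the thin surface $F$ satisfies $[F] = \xi \neq 0$ and so must be nonempty, and every compression body meets $F$; hence no handlebodies can appear and $j(M,\gamma,\xi) = h(M,\gamma,\xi)$. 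When $\xi = 0$ a realizing splitting can be taken with $F = \emptyset$, in which case handlebodies appear precisely from the sides with empty $R_\pm(\gamma)$, so the number of handlebodies equals the integer $d \in \{0,1,2\}$ of Theorem~\ref{thm:bothbounds}, giving $j(M,\gamma,0) = h(M,\gamma,0) - 2d$.

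Substituting these identities into Theorem~\ref{thm:bothbounds} for $\xi \neq 0$ converts
\[ h(M,\gamma,\xi) \leq h(M,\gamma,0) - 2d \leq h(M,\gamma,\xi) + 2\chi^{h}_{(M,\gamma)}(\xi) \]
immediately into
\[ j(M,\gamma,\xi) \leq j(M,\gamma,0) \leq j(M,\gamma,\xi) + 2\chi^{h}(M,\gamma,\xi). \]
For $\xi = 0$ the desired inequality collapses: the left half reads $j(M,\gamma,0) \leq j(M,\gamma,0)$, and the right half requires only $\chi^{h}(M,\gamma,0) \geq 0$, which holds by definition.

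The only potential obstacle is supplying the handlebody count used above; modulo Lemma~\ref{lem:compressionbodymeridianandverticaldecomp}, no further geometric input is needed and the corollary is pure bookkeeping. I therefore expect the proof to consist of one sentence invoking that lemma to establish the two identities, followed by a two-line substitution into Theorem~\ref{thm:bothbounds} and a remark dispensing with the $\xi = 0$ case.
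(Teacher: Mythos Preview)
Your proposal is correct and matches the paper's own derivation: the corollary is obtained by substituting the identities $j(M,\gamma,\xi)=h(M,\gamma,\xi)$ for $\xi\neq 0$ and $j(M,\gamma,0)=h(M,\gamma,0)-2d$ (established in Section~\ref{sec:handlecounts} via Lemma~\ref{lem:compressionbodymeridianandverticaldecomp}) directly into Theorem~\ref{thm:bothbounds}. The paper treats this as a one-line reformulation rather than giving a separate proof, so your level of detail is already more than the paper provides.
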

Written as such, this prompts one to examine the relationships among $h(M,\gamma, \xi)$ over all regular classes $\xi \in H_2(M,\bdry M)$.

\subsection{A function on homology}
The arguments that prove Theorem~\ref{thm:bothbounds} reveal interesting structure of $h(M,\gamma,\xi)$ as a function of the regular classes of $H_2(M,\bdry M)$.   Here we fix the sutured manifold $(M,\gamma)$ and write $h(\xi)$ for $h(M,\gamma,\xi)$, viewing it as a function on homology.

\begin{thm:chfunction}
For a sutured manifold $(M, \gamma)$,
the handle number function 
\[h \colon H_2(M,\bdry M) \to \N\]
has the following properties for any regular classes $\alpha, \beta \in H_2(M, \bdry M)$: 
\begin{enumerate}
    \item $h(\beta) \leq N$ for some constant $N$ (depending on $(M,\gamma)$),
    \item $h(k \beta) = h(\beta)$ for any integer 
    $k >0$, and
    \item $h(\alpha+m\beta) \leq h(\beta)$ for sufficiently large $m$.
\end{enumerate}
\end{thm:chfunction}

As the regular classes form a dense open set of $H_2(M,\bdry M)$ as shown in Lemma~\ref{lem:regclasses}, these properties enable a natural extension to a function on homology with real coefficients.

\begin{thm:chfunctionext}
The handle number extends to a function 
\[ h \colon H_2(M,\bdry M; \R) \to \N\]
that is
\begin{enumerate}
    \item bounded by constant,
    \item constant on rays from the origin,  and
    \item locally maximal.
\end{enumerate}
\end{thm:chfunctionext}

As discussed in Section~\ref{sec:fiberedfaces}, if $(M,\gamma)$ is a sutured manifold with $\bdry M$ toroidal and no annular sutures, then $h=0$ exactly on fibered faces of the Thurston norm.  More generally, for an integral class $\xi$, $h(\xi)=0$ if and only if the decomposition of $(M,\gamma)$ along some surface representing $\xi$ is a product manifold.

\subsection{Acknowledgements}

KLB was partially supported by a grant from the Simons Foundation (grant \#523883  to Kenneth L.\ Baker).

FM-G was partially supported by Grant UNAM-PAPIIT IN108320.

\section{Terminology}\label{sec:terminology}
Much of this can be found throughout the literature in various forms, though we take some liberty with resetting terminology so as to not be encumbered with modifiers.
We largely follow and broaden the perspective in \cite{Baker-MN} based on work of Goda \cite{Goda-circlevaluedmorsetheory}.

\subsection{Notations and conventions}
All manifolds are assumed to be compact and oriented. All submanifolds are assumed to  be properly embedded. If $Y$ is a $n$--manifold  and  $S\subset Y$ is a properly embedded submanifold we denote by $\nbhd(S)$ an open regular neighborhood around $S$, 
and by $\nbhd(\bdry S)$ the open regular neighborhood $\nbhd(S)\cap \bdry Y$ of $\bdry S$ in $\bdry Y$. 

If the codimension of $S$ in $Y$ is one, then  $\nbhd(S)$ is called a (bi)collar neighborhood of $S$ and is equivalent to $S\times (-\epsilon, \epsilon)$, for $\epsilon>0$. The product $S\times [-\epsilon, \epsilon]$, for $\epsilon>0$, is called a closed collar neighborhood of $S$, or simply a collar of $S$.

We use $Y\cut S$ to denote  $Y-\nbhd(S)$.

\subsection{Sutured manifolds and compression bodies}

Sutured manifolds were introduced by Gabai \cite{Gabai-FT3M-I} and provide a convenient framework for discussing compression bodies and various generalizations of Heegaard splittings, e.g. \cite{Goda-circlevaluedmorsetheory, Baker-MN}

A {\em sutured manifold} is a compact oriented $3$--manifold $M$ with a disjoint pair of subsurfaces $R_+$ and $R_-$ of $\bdry M$ such that
\begin{itemize}
    \item the orientation of $R_+$ is consistent with the boundary orientation of $\bdry M$ while the orientation of $R_-$ is reversed,
    \item $\bdry M \cut (R_+ \cup R_-)$ is a collection $\gamma$ of annuli $A(\gamma)$ and tori $T(\gamma)$, collectively called the {\em sutures}, and
    \item each annular suture joins a component of $\bdry R_+$ to a component of $\bdry R_-$.
\end{itemize}

Typically, sutured manifolds are written as $(M,\gamma)$ where $\gamma$ represents the sutured structure.  With this sutured structure understood, we also may use $\gamma$ to represent the core curves of $A(\gamma)$, oriented to be isotopic to $\bdry R_+$.  
At times it is convenient to use $\bdry_+ M$ and $\bdry_- M$ to refer to the surfaces $R_+$ and $R_-$.
(One may also care to use $\bdry_v M$, for {\em vertical} boundary), to refer to the annular sutures $A(\gamma)$.)
We may also use variations of $R_\pm(M)$ or $R_\pm(\gamma)$ for these surfaces $R_-$ and $R_+$.

Suppose $(S, \bdry S)$ and $(T,\bdry T)$ are two embedded oriented surfaces in general position in $(M, \bdry M)$. Define $T \dcsum S$ to be the {\em double curve sum of $T$ and $S$}, i.e., cut and paste along the curves of intersection to get an embedded oriented surface representing the cycle $T+S$. The surface $T \dcsum S$ is an embedded oriented surface coinciding with $T \cup S$ outside a regular neighborhood of the intersections $T \cap S$.  See \cite[Definition 1.1]{Scharlemann_SM}.

A {\em decomposing surface} $(S,\bdry S)\subset(M, \bdry M)$ is an oriented properly embedded surface which intersects each toroidal suture in coherently oriented parallel essential circles and each annular suture either in circles parallel to the core of the annulus or in essential arcs (not necessarily oriented coherently). There is a natural sutured manifold structure $(M', \gamma')$ on $M'= M\cut S$, where:
\begin{itemize}
    \item $\gamma'=(\gamma \cap M')\cup \nbhd(S_+\cap R_-)\cup \nbhd(S_- \cap R_+)$,
    \item $R_+'= ((R_+\cap M')\cup S_+)-int(A(\gamma'))$, and
    \item $R_-'= ((R_-\cap M')\cup S_-)-int(A(\gamma'))$.
\end{itemize}
Here $\nbhd(S) = S \times (-\epsilon, \epsilon)$ is a bicollar neighborhood of $S$ where 
$S=S\times \{0\}$ and we set $S_{\pm}= S\times \{\mp \epsilon \}$.  With this arrangement, the normal of the oriented surface  $S_+$ points out of $M\cut S$ and of $S_-$ points into $M'=M\cut S$.
We say $(M',\gamma')$ is obtained from a {\em sutured manifold decomposition} of $(M,\gamma)$ along $S$, and we denote this decomposition as $(M,\gamma) \overset{S}{\leadsto} (M',\gamma')$. See Figure~\ref{fig:SMdecomp}.

\begin{figure}
    \centering
    \includegraphics[height=2.5cm]{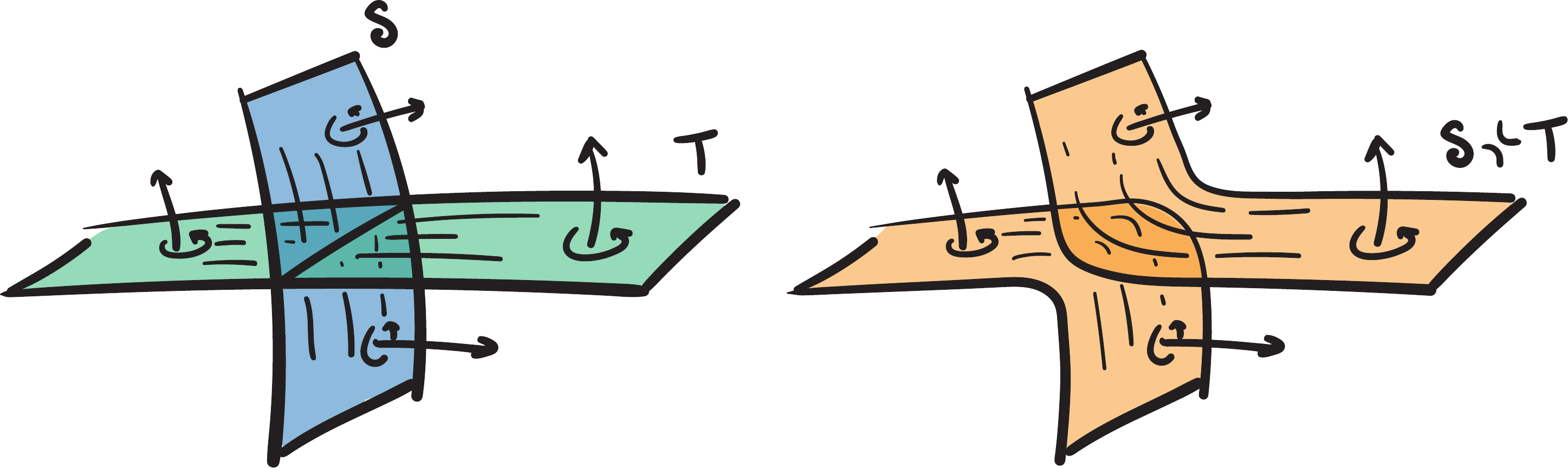}
    \caption{The double curve sum $S \dcsum T$ of the surfaces $S$ and $T$.} 
    \label{fig:doublecurvesum}
\end{figure}

\begin{figure}
    \centering
    \includegraphics[height=2.5cm]{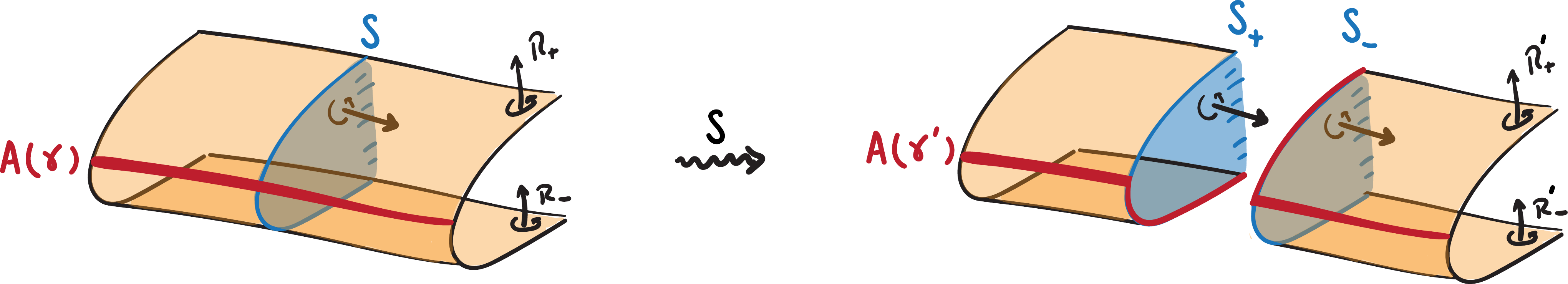}
    \caption{A sutured manifold decomposition along a surface $S$.}
    \label{fig:SMdecomp}
\end{figure}

\medskip
A {\em compression body} $C$ is a special kind of sutured manifold that may be formed from a collar of $R_-$ and a collection of $0$--handles by attaching $1$--handles.   The collar of $R_-$ is a product $R_- \times [-1,1]$ of a non-empty and possibly disconnected compact oriented surface $R$.  Then $1$--handles are attached to $R_- \times \{1\}$ and the boundaries of the $0$--handles.  
As a sutured manifold, the surface $\bdry_- C = R_-$ is identified with $R_- \times \{-1\}$, the annular sutures are $\bdry_v C = \bdry R_- \times [-1,1]$, and $\bdry_+ C = R_+$ is what remains of $\bdry C$.

A {\em meridional disk} of a compression body is a compressing disk for $R_+$.  In the above formulation of compression bodies, $R_-$ is necessarily incompressible in the compression body. For such a compression body $C$, we may say  that $\bdry_- C = R_-$ is the {\em thin boundary}, $\bdry_+ C = R_+$ is the {\em thick boundary}, and $\bdry_v 
C = A(\gamma)$ is the {\em vertical boundary}.

A {\em trivial} compression body is a compression body without any handles; hence it is a product.  A {\em product ball} is a trivial compression body homeomorphic to a $3$--ball when the sutured manifold structure is forgotten; it has the form $D^2 \times [-1,1]$.

A {\em handlebody} is a connected compression body in which $R_- = \emptyset$. The entire boundary is $R_+$, a closed oriented surface.  Hence, a handlebody may be viewed as a single $0$--handle with $g$ $1$--handles attached where $g$ is the genus of $R_+$.  We also say $g$ is the genus of the handlebody.  More generally one may regard a handlebody as a closed regular neighborhood of a connected finite graph in which the vertices give rise to the collection of $0$--handles and the edges determine the $1$--handles.  

We may ``dualize'' a compression body by swapping $R_+$ and $R_-$.
Then a {\em dualized compression body} may be regarded as a sutured manifold formed from a collar of $R_-$ by attaching $2$--handles and filling with $3$--handles a collection of resulting spherical boundary components that are disjoint from the vertical boundary.  In passing between a compression body and a dualized compression body, duality exchanges the $1$-- and $2$--handles, the $0$-- and $3$--handles, and the roles of the surfaces $R_+$ and $R_-$. So, dually, $R_+$ is the thin boundary, $R_-$ is the thick boundary, and a handlebody is a connected dualized compression body in which $R_+ = \emptyset$. See Figure~\ref{fig:compressioinbodydual}.

\begin{figure}
    \centering
    \includegraphics[width=.7\textwidth]{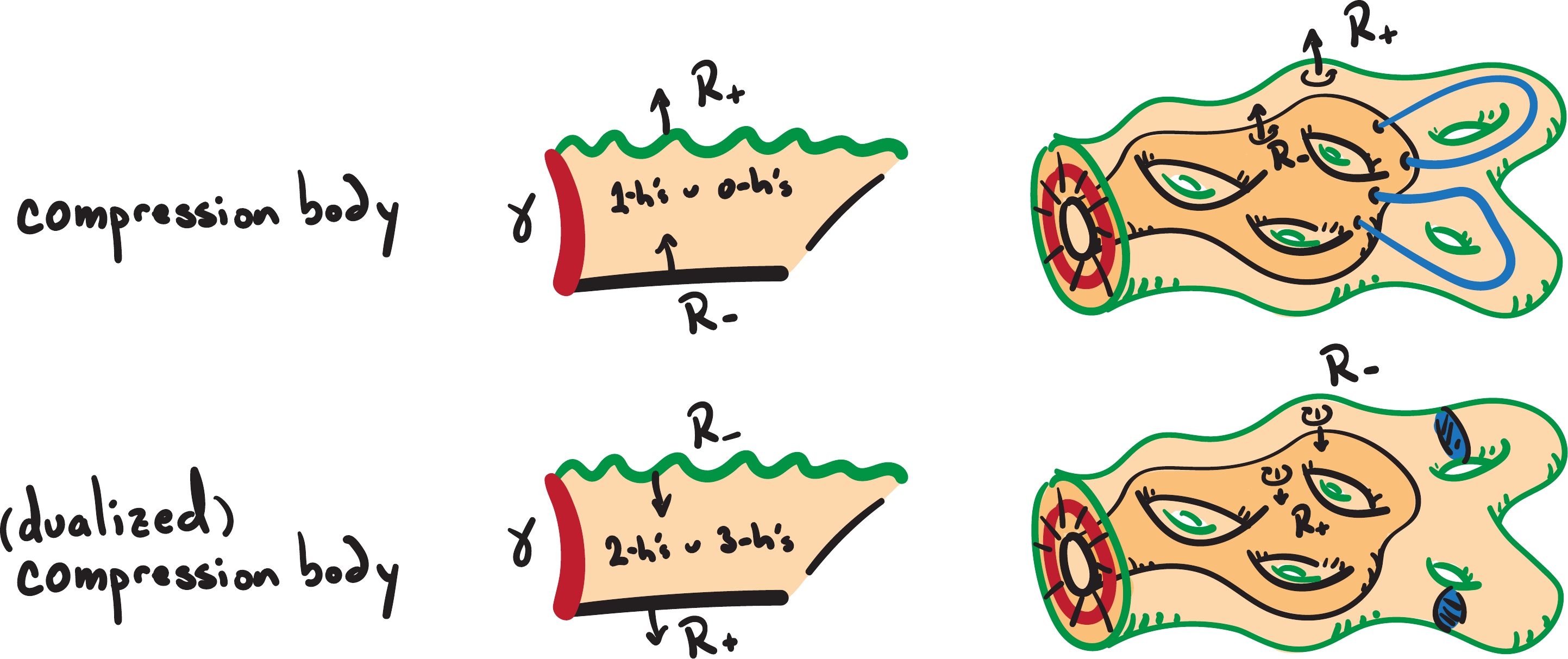}
    \caption{A compression body and a dualized compression body, each shown first with a schematic depiction and then with an example.  The examples also show the cores for a choice of $1$--handles and $2$--handles.}
    \label{fig:compressioinbodydual}
\end{figure}

\begin{remark}
Forgetting about the gradient flows implied by the indices of handles and the (normal) orientations of $R_+$ and $R_-$, a compression body $C$ can be viewed as a $3$--manifold  obtained by

\begin{itemize}
    \item (the ``Inside-Out'' construction) attaching $1$--handles to a collar of the surface $\bdry_- C$ and some $0$--handles so that no spherical boundary is left disjoint from $\bdry_v C$, or equivalently
    \item (the ``Outside-In'' construction)  attaching $2$--handles to a collar of the surface $\bdry_+ C$ and filling with $3$--handles a collection of spherical boundary components disjoint from $\bdry_v C$.
\end{itemize}

\end{remark}

\subsection{Handle numbers, Heegaard splittings }
\label{sec:handle}
The {\em handle number} $h(C)$ of a compression body $C$ is the minimum number of $0$-- and $1$--handles needed to form $C$.   The handle number of a handlebody $H$ of genus $g$ is $h(H)=g+1$. 
In the dualized setting,  the handle number is the minimum number of $2$-- and $3$--handles needed to form the compression body.

Let $(M,\gamma)$ be a sutured manifold without toroidal sutures.
A {\em linear Heegaard splitting} of $M$, denoted $(M,S)$ or $(A,B;S)$, is a decomposition of $M$ into a compression body $A$ and dual compression body $B$ along a surface $S = R_+(A) = R_-(B)$.  

Observe that $S$ is then a decomposing surface with $\bdry S$ contained in the annular sutures of $M$.
Then the sutured manifold decomposition $M' = M \cut S$ yields a disjoint union of a 
 compression body $A'$ and a 
dualized compression body $B'$ 
where $S_+ = R_+(A')$, $S_- = R_-(B')$, and $M = A' \cup \nbhd(S) \cup B'$.
In particular, $A= A' \cup S\times [-\epsilon, 0]$ and $B= B'\cup \times [0,\epsilon]$ so that $A\cong A'$ and $B \cong B'$ as sutured manifolds.
Note that $R_-(M) = R_-(A)$ and $R_+(M) = R_+(B)$.
See Figure~\ref{fig:HSsuturedman}.

The surface $S$ is called a {\em Heegaard surface} for $(M,\gamma)$. As $S$ is the thick boundary of both $A$ and $B$, we also refer to $S$ as a {\em thick} surface. 
The {\em Heegaard genus} $g(M,\gamma)$ of the sutured manifold $(M,\gamma)$ is the minimum genus of its Heegaard surfaces.

\begin{remark}
Traditionally, the term {\em Heegaard splitting} is used for a decomposition of a manifold $M$ along a connected surface $S$ into either two handlebodies when $\bdry M = \emptyset$ or two connected compression bodies without vertical boundary when $\bdry M \neq \emptyset$.  We include such splittings among {\em linear Heegaard splittings}.  Ultimately, we opt to use the term {\em Heegaard splitting} to encompass a much larger class of splittings.
\end{remark}

\begin{remark}
Let us restrict attention here to sutured manifolds $(M,\gamma)$ without spherical components of $R(\gamma)$.
In \cite{juhasz}, Juh\'asz introduces {\em sutured diagrams} which are Heegaard diagrams for sutured manifolds $(M,\gamma)$.  Implicitly, these define Heegaard splittings for such sutured manifolds.   Conversely, given a Heegaard splitting $(A,B;S)$ of a sutured manifold $(M,\gamma)$, a minimal choice of handle structure on $A$ and $B$ induces a sutured diagram on $S$.  We leave the details of this to the reader as we are not concerning ourselves with sutured diagrams in this article.
\end{remark}

\begin{figure}
    \centering
    \includegraphics[width=4cm]{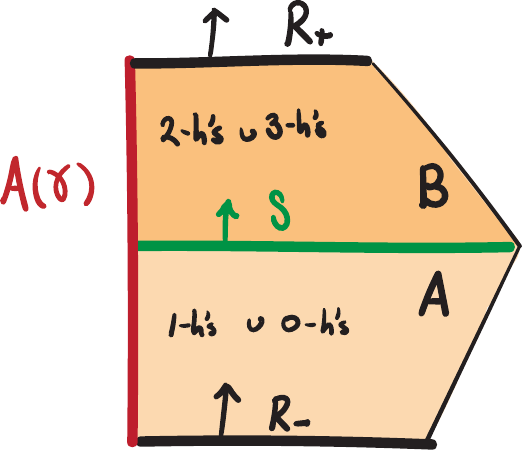}
    \caption{Heegaard splitting of $(M,\gamma)$ into compression bodies $A$ and $B$.}
    \label{fig:HSsuturedman}
\end{figure}

Every sutured manifold $M$ without toroidal sutures admits a Heegaard splitting. 
The {\em handle number} of a Heegaard splitting $(M,S) = (A,B;S)$ of a sutured manifold $M$ is $h(M,S) = h(A)+h(B)$.
The {\em (linear) handle number} 
$h(M,\gamma, 0)$ of the sutured manifold $M$ is the minimum of $h(M,S)$ among all linear Heegaard splittings of $(M,\gamma)$.
Observe that this is the minimum number of $0$--, $1$--, $2$--, and $3$--handles needed to construct $M$ from a collar of $R_-$.  

In the case of a product sutured manifold $M = F\times [-1,1]$ where $R_{\pm}(M) = F \times \pm1$, the Heegaard surface $F \times 0$ gives a Heegaard splitting of $M$ for which the compression bodies $A$ and $B$ are trivial. Thus its linear handle number is zero.  Indeed, the linear handle number of a sutured manifold is $0$ exactly when the sutured manifold is a  product.

While we often consider situations in which $M$ is connected, note that these definitions all apply when $M$ is a disconnected sutured manifold.  Indeed, a Heegaard splitting of $M$ restricts to a Heegaard splitting on each connected component and its handle number is the sum of the handle numbers of its connected components.
Furthermore, note that the Heegaard surface restricts to a connected Heegaard surface for each connected component of $M$.

\begin{remark} \label{rem:godadefn}
Goda defines the (linear) handle number of a connected sutured manifold $M$ in the situation where the surfaces $R_+$ and $R_-$ are non-empty and homeomorphic (and have no sphere components). Then for any Heegaard splitting $(A,B;S)$ of $M$, $h(A)$ is just a count of $1$--handles, and we have that $h(A) = h(B)$.  So Goda defines $h(M,\gamma)$ to be the minimum of $h(A)$ over all Heegaard splittings \cite{Goda-circlevaluedmorsetheory}.  Our definition takes $h(M,\gamma,0)$ to be twice that.
\end{remark}

\subsection{Circular Heegaard splittings } 
\label{sec:chs}

Let $(M,\gamma)$ be a sutured manifold, possibly with toroidal sutures.  A {\em circular Heegaard splitting} of $M$, or for simplicity just a {\em Heegaard splitting} of $M$, is a decomposition of $M$ along a pair of disjoint properly embedded oriented surfaces $F$ and $S$ so that
\begin{itemize}
    \item $F$ is a decomposing surface for $M$ 
    \item the sutured manifold decomposition of $M$ along $F$ yields a  sutured manifold $M \cut F$ without toroidal sutures, and
    \item $S$ is a Heegaard surface for $M \cut F$.
\end{itemize}
This last condition implies that $\bdry S$ is contained in the union $\gamma \cup \nbhd(\bdry F)$ of the sutures $\gamma$ and a collar neighborhood of $\bdry F$ in $\bdry M$ so that $\bdry S$ is the union of the cores of the annular sutures of $M \cut F$.
Furthermore, as $M\cut F$ is potentially disconnected, the surface $S$ restricts to a Heegaard surface on each component of $M \cut F$.
We may denote the Heegaard splitting as the triple $(M,F,S)$ or the quadruple $(A,B;F,S)$ where $(A,B;S)$ is a linear Heegaard splitting of $M\cut F$.
We say $S$ and $F$ are the {\em thick} and {\em thin} surfaces of the splitting as they are respectively the thick and thin boundaries of both $A$ and $B$, aside from $R_+$ and $R_-$.

\begin{remark}
\label{rem:genHeegspl}
Our definition of {\em Heegaard splitting} encompasses linear Heegaard splittings as defined above as well as the splittings often called ``generalized (linear) Heegaard splittings'' or ``generalized circular Heegaard splittings''.  
Indeed a linear Heegaard splitting is just a Heegaard splitting with $F=\emptyset$.
\end{remark}

Observe that in a  Heegaard splitting $(M,F,S)$, we have $[F,\bdry F] = [S, \bdry S] \in H_2(M, \bdry M)$.  Furthermore, for each toroidal suture $T$ of $\bdry M$, each $[\bdry F]$ and $[\bdry S]$ must restrict to the same non-zero class in $H_1(T)$.   
Let us say a class $\xi \in H_2(M, \bdry M)$ is {\em regular} (with respect to the sutured manifold structure) if any surface $F$ representing $\xi$ meets each toroidal suture.

Note that if $(M,\gamma)$ has no toroidal sutures, then every class is regular.  However, if $(M,\gamma)$ has toroidal sutures, then a regular class is necessarily non-zero.
For the exterior of a knot in $S^3$ whose boundary is a toroidal suture, then every non-zero class is regular.  
For the exterior of a two-component link in $S^3$ whose boundary is a pair of toroidal sutures, then every non-zero class is regular unless the linking number is zero; in that case each component bounds a Seifert surface disjoint from the other component.

The {\em handle number} of a (circular) Heegaard splitting $(M,F,S) = (A,B;F,S)$ is $h(M,F,S) = h(A)+h(B)$. 
For a sutured manifold $(M,\gamma)$ and decomposing surface $F$ that meets all toroidal sutures, 
the {\em handle number} $h(M,F)$ is  the minimum handle number $h(M,F,S)$ among Heegaard splittings of $(M,\gamma)$ with thin level $F$. (In other words, $h(M,F)$ is the linear handle number of the sutured manifold $M\cut F$ obtained by decomposing along $F$.)
For a sutured manifold $(M,\gamma)$ and regular class $\xi \in H_2(M, \bdry M)$,
the {\em handle number} $h(M,\gamma,\xi)$ is the minimum handle number $h(M,F,S)$ among Heegaard splittings of $(M,\gamma)$ (or equivalently, the minimum handle number $h(M,F)$ among decomposing surfaces) in which $F$ represents $\xi$, i.e.\ in which $[F,\bdry F] = \xi$.
We say a surface $F$ representing a regular class $\xi$ {\em realizes} $h(M,\gamma,\xi)$ if there is a surface $S$ in $M$ such that $h(M,\gamma,\xi) = h(M,F,S)$.
 
Observe that $0$ is a regular class for a sutured manifold $(M,\gamma)$ without toroidal sutures.  Hence the handle number for the trivial class is the linear handle number $h(M,\gamma,0)$.  This is in accordance with  Remark~\ref{rem:genHeegspl} and Lemma~\ref{lem:amalg} below.

\subsection{Product decompositions and handle numbers}
Consider a sutured manifold $(M,\gamma)$. A {\em product disk} is a properly embedded disk whose boundary is a union of an arc in $R_+(\gamma)$, an arc in $R_-(\gamma)$, and two spanning arcs of the annular sutures.  A {\em product annulus} is a properly embedded annulus with one boundary component in $R_+(\gamma)$ and the other in $R_-(\gamma)$.  A {\em product decomposition} is a sutured manifold decomposition where the decomposing surface is a union of product disks and product annuli.

The following lemma is well known in the theory of sutured manifold decompositions.  We include it without proof.
\begin{lemma}
\label{lem:commutativityoftransversesumdecomp}
Suppose $E$ and $F$ are transversally intersecting decomposing surfaces for $(M,\gamma)$ 
such that in any component of $A(\gamma)$ all arcs of $\bdry E$ and $\bdry F$ are coherently oriented.
Let $\delta$ be the collection of product disks and product annuli induced from the double curve sum.

Then we have the following diagram of sutured manifold decompositions.

\[\begin{tikzcd}
	&& {(M_E,\gamma_E)} \\
	{(M,\gamma)} && {(M_{E \dcsum F},\gamma_{E \dcsum F})} && {(M',\gamma')} \\
	&& {(M_F,\gamma_F)}
	\arrow["E", squiggly, from=2-1, to=1-3]
	\arrow["F"', squiggly, from=2-1, to=3-3]
	\arrow["{E \dcsum F}", squiggly, from=2-1, to=2-3]  
	\arrow["{F \cut E}", squiggly, from=1-3, to=2-5]
	\arrow["\delta", squiggly, from=2-3, to=2-5]
	\arrow["{E\cut F}"', squiggly, from=3-3, to=2-5]
\end{tikzcd}\]
\qed
\end{lemma}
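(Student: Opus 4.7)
The plan is to recognize that all three paths in the diagram end up cutting $M$ along the same union of surfaces, namely $E \cup F$, just organized in three different ways. So I would set up the target $(M',\gamma')$ as the sutured manifold obtained by first cutting along $E$ and then along $F\cut E$, and show that each of the other two composites naturally identifies with this.

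First I would handle the \emph{two linear paths}. Cutting $(M,\gamma) \leadsto (M_E,\gamma_E)$ along $E$ and then along $F\cut E$ produces $M\cut(E\cup F)$; the induced sutured structure at each step is defined by the local rules for $R_\pm$ and $\gamma$ given in Section~\ref{sec:terminology}, which depend only on how $E$ and $F$ sit in $\bdry M$ near their boundaries. Transversality of $E$ and $F$ and the hypothesis that boundary arcs on each component of $A(\gamma)$ are coherently oriented mean the contributions of $E$ and $F$ to the sutured structure are disjoint and commute, so the same result is obtained by first cutting along $F$ and then along $E\cut F$. This gives the right side of the diagram.

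Next I would treat the \emph{double curve sum path}. The surface $E\dcsum F$ agrees with $E\cup F$ outside a regular neighborhood of $E\cap F$, and inside such a neighborhood the two are related by a local model: near an interior intersection circle $c\subset \Int M$, we may take $\nbhd(c)\cong c\times D^2$ with $E,F$ appearing as $c\times\{\text{two transverse arcs}\}$, and the coherent smoothing $E\dcsum F$ appears as $c\times\{\text{two disjoint arcs}\}$. Cutting the local model along these disjoint arcs yields the same pieces as cutting $E\cup F$ plus one extra \emph{product annulus} $c\times(\text{square strip})$, in which the $R_+$ and $R_-$ sides are determined by the coherent orientation used to smooth. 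Near an intersection arc $c\subset A(\gamma)$, the identical local analysis (now with $c$ an interval) produces a \emph{product disk} instead, and here is precisely where the coherent-orientation hypothesis on $A(\gamma)$ is used: incoherent smoothing would produce a bigon that is not a product disk, and the diagram would fail. The union of these product annuli and product disks is $\delta$, and a further decomposition of $(M_{E\dcsum F},\gamma_{E\dcsum F})$ along $\delta$ removes exactly these extra product regions, yielding the same $(M',\gamma')$.

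Finally I would verify that the three induced sutured structures on $M'$ agree. Since the sutures, $R_+$, and $R_-$ of each decomposition are defined by the local prescriptions involving $S_+\cap R_-$ and $S_-\cap R_+$, and since the three paths differ only in a regular neighborhood of $E\cap F$ where the local models above make the identifications explicit, this reduces to a finite check in the two local models (one for interior intersection circles and one for boundary intersection arcs). The main obstacle, and essentially the only subtle point, is this bookkeeping of $R_\pm$ and annular sutures inside the local models and confirming that after cutting off $\delta$ the $R_\pm$ labels on the pieces match those produced by either linear path; everything else is a purely topological commutativity statement for cutting along disjoint pieces of surfaces.
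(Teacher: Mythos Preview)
The paper states this lemma without proof, remarking only that it is ``well known in the theory of sutured manifold decompositions.'' Your approach---observing that all three routes amount to cutting $M$ along $E\cup F$ and differ only in local models near $E\cap F$, and then checking via those local models that the induced sutured structures agree and that decomposing along $\delta$ exactly accounts for the discrepancy between $M\cut(E\dcsum F)$ and $M\cut(E\cup F)$---is the standard argument and is correct in outline.

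One phrasing imprecision: you write that cutting the local model along the smoothed arcs ``yields the same pieces as cutting $E\cup F$ plus one extra product annulus.'' Taken literally this is off: locally $M\cut(E\dcsum F)$ has \emph{three} regions while $M\cut(E\cup F)$ has four. The correct statement is that the middle ``strip'' region of $M\cut(E\dcsum F)$ is a product (annulus- or disk-times-interval) containing the product surface $\delta$, and decomposing along $\delta$ bisects it into the two missing quadrants, after which the four resulting pieces match those of $M\cut(E\cup F)$ as sutured manifolds. Your subsequent sentences make clear you have the right picture; only this one sentence needs rewording.
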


\begin{remark}
The condition that all arcs of $\bdry E$ and $\bdry F$ are coherently oriented in any component of $A(\gamma)$ will be called condition (C1) in Section~\ref{sec:conditioned}.
\end{remark}

\begin{lemma}\label{lem:productdecompgivesnondecreasinghandlenumber}
Suppose $(M,\gamma)$ has no toroidal sutures, and $(M',\gamma')$ is obtained from $(M,\gamma)$ by a product decomposition. Equivalently, suppose $(M,\gamma)$ is obtained from $(M',\gamma')$ by gluing a pair of spanning rectangles in $A(\gamma')$ or a pair of components of $A(\gamma')$.
Then $h(M',\gamma',0) \geq h(M,\gamma, 0)$.
\end{lemma}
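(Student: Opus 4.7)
The plan is to take a linear Heegaard splitting $(A',B';S')$ of $(M',\gamma')$ realizing $h(M',\gamma',0)$ and produce from it a linear Heegaard splitting $(A,B;S)$ of $(M,\gamma)$ with no more handles. Write $\sigma_1, \sigma_2 \subset A(\gamma')$ for the pair of spanning rectangles (Case 1) or annular components (Case 2) that are identified to pass from $(M',\gamma')$ to $(M,\gamma)$.

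First I would analyze how $S'$ meets the gluing locus. Because $\partial S'$ is precisely the union of the cores of the annular sutures of $(M',\gamma')$, the intersection $\sigma_i \cap S'$ is a single arc $\alpha_i$ spanning $\sigma_i$ (Case 1) or the single core circle $c_i$ of $\sigma_i$ (Case 2); in either case it splits $\sigma_i$ into a sub-rectangle/sub-annulus $\sigma_i^-$ lying in the vertical boundary of $A'$ and one $\sigma_i^+$ lying in the vertical boundary of $B'$. Since the identification $\sigma_1 \to \sigma_2$ preserves the sutured structure, it restricts to three compatible gluings: $\sigma_1^- \to \sigma_2^-$ on the vertical boundary of $A'$, $\sigma_1^+ \to \sigma_2^+$ on the vertical boundary of $B'$, and $\alpha_1 \to \alpha_2$ (or $c_1 \to c_2$) on $\partial S'$. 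Performing these three identifications on $A'$, $B'$, and $S'$ individually produces spaces $A$, $B$, $S$ inside $M$ with $A \cup B = M$ and $A \cap B = S$.

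The core of the argument is to verify that $A$ and $B$ are compression bodies with $h(A) \leq h(A')$ and $h(B) \leq h(B')$. Start from a handle decomposition of $A'$ with $h(A')$ handles, writing $A' = R_-(A') \times [-1,0]$ with the $0$- and $1$-handles attached to $R_-(A') \times \{0\}$ and disjoint from the vertical boundary $\partial R_-(A') \times [-1,0]$. Each $\sigma_i^-$ then has the form $J_i \times [-1,0]$ for an arc or circle $J_i \subset \partial R_-(A')$, so the gluing $\sigma_1^- \to \sigma_2^-$ is product-like in the $[-1,0]$ direction and descends to an identification $J_1 \sim J_2$ on $\partial R_-(A')$. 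Letting $\widehat R_-$ denote the resulting compact oriented surface, the quotient $A$ is naturally $\widehat R_- \times [-1,0]$ with the same $0$- and $1$-handles still attached, exhibiting $A$ as a compression body with $h(A) \leq h(A')$. The dual argument applied to $B'$ (with its $2$- and $3$-handles attached to $R_+(B') \times \{0\}$ away from the vertical boundary) yields $h(B) \leq h(B')$. Adding the two inequalities gives
\[ h(M,\gamma,0) \leq h(A) + h(B) \leq h(A') + h(B') = h(M',\gamma',0). \]

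The main obstacle is the bookkeeping required to confirm that $(A,B;S)$ really is a linear Heegaard splitting of $(M,\gamma)$: that $S$ is a decomposing surface whose boundary is precisely the cores of the annular sutures of $M$, that $R_\pm(A)$ and $R_\pm(B)$ reassemble correctly with respect to $R_\pm(\gamma)$, and that $A(\gamma)$ is obtained from $A(\gamma')$ by the induced identification. This requires a short case analysis depending on whether $\sigma_1$ and $\sigma_2$ lie in the same or different components of $A(\gamma')$ (so that the product decomposition is a product disk with both spanning arcs on one suture versus two, or a product annulus joining one or two components of $R_\pm$), but in each case it reduces to unwinding the definition of a product decomposition and checking that the induced gluing on $\partial R_-(A')$ matches the gluing on $\partial R_-(\gamma')$ dictated by the sutured structure.
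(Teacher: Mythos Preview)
Your proposal is correct and follows essentially the same approach as the paper: take a minimal Heegaard splitting of $(M',\gamma')$, observe that the Heegaard surface meets each gluing region $\sigma_i$ in exactly the core arc (or circle), and glue the two compression bodies along their vertical boundaries to obtain a Heegaard splitting of $(M,\gamma)$ with no more handles. The only difference is cosmetic: where the paper invokes \cite[Lemma~2.4]{Goda-HSforSMandMuraSum} to conclude that the glued pieces $A$ and $B$ are compression bodies with $h(A)=h(A')$ and $h(B)=h(B')$, you supply the direct argument (pushing the handles off the vertical boundary so that the gluing is product-like on the collar), which amounts to reproving the needed direction of Goda's lemma.
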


\begin{proof}
We state the proof for a decomposition along a product disk.  The case of a product annulus carries through similarly.

Let $S'$ be a Heegaard surface for $(M',\gamma')$ that realizes $h(M',\gamma', 0)$, splitting $(M',\gamma')$ into compression bodies $A'$ and $B'$.
Let $\delta_\pm$ be two spanning rectangles in $A(\gamma')$ so that identifying them produces $(M,\gamma)$.  Observe that $S'$ may be taken to intersect each of $\delta_\pm$ as the core arc of the annular sutures does.  The identification of $\delta_\pm$ may be done to also identify these arcs so that $S'$ becomes a surface $S$ in the sutured manifold $(M,\gamma)$. Likewise, the identification of $\delta_\pm$ restricts to an identification of the rectangles $A' \cap \delta_\pm$ of $A'$ and also of the rectangles $B' \cap \delta_\pm$ of $B'$. Consequently these identifications yield sutured manifolds $A$ and $B$ with $\bdry_+ A = S = \bdry_- B$ that contain product disks, and the decompositions along these product disks yield the compression bodies $A'$ and $B'$.  By \cite[Lemma 2.4]{Goda-HSforSMandMuraSum}, $A$ and $B$ are compression bodies with $h(A)=h(A')$ and $h(B)=h(B')$.   Hence $S$ is a Heegaard surface for $(M,\gamma)$ with $h(M,S) = h(A)+h(B) = h(A')+h(B') = h(M',\gamma',0)$.  Since $h(M,\gamma,0) \leq h(M,S)$, we have $h(M,\gamma,0) \leq h(M',\gamma',0)$.
\end{proof}

\subsection{Handle structures, handle counts, and handle indices}\label{sec:handlecounts}

Given a handle structure on a manifold $X$, let $\mathcal{H}^k(X)$ be the collection of its $k$--handles, and let $h^k(X) = | \mathcal{H}^k(X) |$ be the number of $k$--handles.

The {\em handle number} of a sutured $3$--manifold $M$ with a handle structure is $h(M) = h^0(M)+h^1(M)+h^2(M)+h^3(M)$.
The {\em handle index} of a sutured $3$--manifold $M$ with a handle structure is $j(M) = (h^1(M)-h^0(M))+(h^2(M)-h^3(M))$.

Observe that for a {\em minimal} handle structure on a connected compression body $A$ and a {\em minimal} handle structure on a connected dualized compression body $B$ we have 
\begin{equation*}
h^0(A) =
        \begin{cases} 
        0  &\mbox{ if } \bdry_-A\neq \emptyset\\   
        1 &\mbox{ if } \bdry_-A= \emptyset
        \end{cases}
\qquad \mbox{ and } \qquad
h^3(B) =
        \begin{cases} 
        0  &\mbox{ if } \bdry_+B\neq \emptyset\\   
        1 &\mbox{ if } \bdry_+B= \emptyset
        \end{cases}
\end{equation*}
so that 
\begin{equation*}
h(A) =
        \begin{cases} 
        j(A)  &\mbox{ if } A \mbox{ is not a handlebody}\\   
        j(A)+2 &\mbox{ if } A \mbox{ is a handlebody}
        \end{cases}
\qquad \mbox{ and } \qquad
h(B) =
        \begin{cases} 
        j(B)  &\mbox{ if } B \mbox{ is not a handlebody}\\   
        j(B)+2  &\mbox{ if } B \mbox{ is a handlebody}
        \end{cases}
\end{equation*}

Let $(M,\gamma)$ be a connected sutured manifold without toroidal sutures. 
Define $d=d(M,\gamma)$ to be $0$, $1$, or $2$ depending on whether neither, just one, or both of $R_+$ and $R_-$ are empty.

Then for any linear Heegaard splitting of $(M,\gamma)$ realizing $h(M,\gamma,0)$ with handle structure that is minimal on each compression body, observe that $d = h^0(M) + h^3(M)$.

More generally, let $(M,F,S) = (F,S;A,B)$ be a circular Heegaard splitting of the sutured manifold $(M,\gamma)$.  Since $h(M,F,S) = h(A)+h(B) = h^0(A) + h^1(A) + h^2(B) + h^3(B)$, the number $d(M,F,S) = h^0(A)+h^3(B)$ counts the number of handlebodies in the splitting.  Then if $(M,F,S)$ is a splitting realizing $h(M,\gamma,\xi)$,  we may define $d(M,\gamma, \xi) = d(M,F,S)$.   Observe that $d(M,\gamma,0)=d(M,\gamma)$ as above while $d(M,\gamma,\xi) =0$ if $\xi \neq 0$.


\section{Some operations on Heegaard splittings}

\subsection{Amalgamations}

Consider a circular Heegaard splitting $(M, F, S)$ of $M$ into a  compression body $A$ and dual compression body $B$.  Suppose a collection of components $A_1$ of $A$ and $B_0$ of $B$ meet along a collection of components $F_1$ of $F$ (so that $F_1 = \bdry_- A_1 \cap \bdry_+ B_0$).  If $\bdry_+ A_1 \cap \bdry_- B_0 = \emptyset$ then we can {\em amalgamate} $(M, F, S)$ along $F_1$ to produce a new circular Heegaard splitting $(M',F',S')$ with $F' = F - F_1$.

Since we assume $\bdry_+ A_1 \cap \bdry_- B_0 = \emptyset$, the definition of amalgamation along a surface given in \cite[Definition 12]{Baker-MN} extends to this case where $A_1$ and $B_0$ may be disconnected. (As noted in Remark~\ref{rem:genHeegspl}, our definition of {\em circular Heegaard splitting} encompasses splittings that are often called {\em generalized Heegaard splittings} or {\em generalized circular splittings}.)  See also \cite[\S2]{SchultensClassificationOfHeegSpl} and \cite[\S3]{LackenbyAnAlgorithmtodetermine} for other treatments of the amalgamation operation.  

\begin{lemma}\label{lem:amalg}
If $(M',F',S')$ is an amalgamation of $(M,F,S)$, then $h(M',F',S') \leq h(M,F,S)$.
\end{lemma}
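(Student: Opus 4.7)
The plan is to unwind the amalgamation construction at the level of handle structures. Start by choosing minimal handle structures on the compression body $A$ and the dualized compression body $B$ realizing $h(M,F,S) = h(A)+h(B)$. The global amalgamation changes only the pair $(A_1, B_0)$ glued along $F_1$, leaving $A-A_1$ and $B-B_0$ untouched, so it suffices to exhibit a new handle structure on the amalgamated region $X := A_1 \cup_{F_1} B_0$ that uses no more handles than the restriction of the original structure to $A_1 \cup B_0$.

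To construct this new structure, write $A_1$ as the collar $\bdry_- A_1 \times I$ together with $h^0(A_1)$ 0--handles and $h^1(A_1)$ 1--handles, and dually write $B_0$ as the collar $\bdry_+ B_0 \times I$ together with $h^3(B_0)$ 0--handles and $h^2(B_0)$ 1--handles (reading $B_0$ from its thin side). Since $F_1 \subset \bdry_- A_1 \cap \bdry_+ B_0$, near $F_1$ both collars are glued to form a single product $F_1 \times [-1,1]$ inside $X$. Using this combined collar, slide each 1--handle of $A_1$ attached along $F_1$ across $F_1 \times [-1,1]$ into the $B_0$ region, where by the hypothesis $\bdry_+ A_1 \cap \bdry_- B_0 = \emptyset$ it may be absorbed into the dual handle structure of $B_0$ without colliding with the $B_0$ thick surface. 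The net effect is to replace the thick surface $S$ restricted to $A_1 \cup B_0$ by a new thick surface $S_X \subset X$ obtained from $S \cap (A_1 \cup B_0)$ by tubing along (cores of) the displaced handles, and to re-cluster the handles into a compression body $A'_X$ (built from $(\bdry_- A_1 - F_1) \times I$) and a dualized compression body $B'_X$ (built from $(\bdry_+ B_0 - F_1) \times I$) meeting along $S_X$.

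By construction, every handle of the new structure on $X$ corresponds to at most one handle of the original structure on $A_1 \cup B_0$, so $h(A'_X) + h(B'_X) \leq h(A_1) + h(B_0)$. Setting $A' = (A - A_1) \cup A'_X$ and $B' = (B-B_0) \cup B'_X$ and $S' = (S - (S \cap (A_1 \cup B_0))) \cup S_X$ yields a circular Heegaard splitting $(M, F', S') = (A', B'; F', S')$ with $F' = F - F_1$, and
\[ h(M',F',S') = h(A') + h(B') \leq h(A) + h(B) = h(M,F,S). \]

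The main obstacle is verifying that the merged pieces $A'_X$ and $B'_X$ are genuinely (dualized) compression bodies with the claimed handle counts; in particular one must confirm that the hypothesis $\bdry_+ A_1 \cap \bdry_- B_0 = \emptyset$ makes the tubing of $S$ along the displaced handles well-defined and that no spherical boundary components escape the vertical boundary after the reorganization. Once these topological bookkeeping points are verified, the handle count follows directly, and in fact one typically obtains a strict inequality reflecting cancellations along $F_1$, but only the stated bound is needed.
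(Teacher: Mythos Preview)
Your approach is the same as the paper's: both argue that the handle structure on $(M,F,S)$ induces one on $(M',F',S')$ with the same number of handles of each index, whence $h(M',F',S') \leq h(M,F,S)$. The paper's proof is a one-line appeal to the definition of amalgamation (citing \cite{Baker-MN}); you attempt to spell out that definition.

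Your explicit version has one imprecision worth flagging. Amalgamation along $F_1$ does not localize to $X = A_1 \cup_{F_1} B_0$ alone. The components of $M \cut F$ on either side of $F_1$ are $A_0 \cup_{S_0} B_0$ and $A_1 \cup_{S_1} B_1$, and after removing $F_1$ these merge into a single component of $M \cut F'$ carrying a single Heegaard surface. The new compression body on the $A$--side is $A_0$ together with the $1$--handles of $A_1$ (their feet slid through $B_0$ down to $S_0$), and dually on the $B$--side. Your pieces $A'_X$, $B'_X$ and the formula $A' = (A-A_1) \cup A'_X$ do not quite reflect this: in the typical case $\bdry_- A_1 = F_1$, so your $A'_X$ has empty base collar, and how it attaches to $A-A_1$ is left unspecified. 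The handle count is nonetheless preserved, so once the construction is set up in the standard way the verification you flag as the ``main obstacle'' is routine and your conclusion stands.
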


\begin{proof}
From the definition of amalgamation (see \cite[Definition 12]{Baker-MN}), the initial handle structure on $(M,F,S)$ extends to a handle structure on $(M',F',S')$ with the same number of handles of each dimension.  Hence we have $h(M',F',S') \leq h(M,F,S)$.
(Note that equality does not necessarily hold since it is possible that induced handle structure on $(M',F',S')$ now allows for some cancellations.)
\end{proof}

\begin{remark}
\label{rem:amalgamationhomology}
If $(M',F',S')$ is an amalgamation of $(M,F,S)$ along the surface $F_1 \subset F$, then observe that $[F'] + [F_1] = [F]$.  Hence $[F']=[F]$ only if $[F_1] = 0$.
\end{remark}

\subsection{Realization by homologically essential surfaces}
A properly embedded surface $F$ in a $3$--manifold $M$ is {\em homologically essential} if no non-empty subcollection of its components is null-homologous in $H_2(M,\bdry M)$.

\begin{lemma}\label{lem:homologicallyessential}
Suppose $\xi$ is a regular class.
If $\xi\neq0$ and $F$ realizes $h(M,\gamma,\xi)$, then there is a homologically essential subsurface $\overline{F} \subset F$ that also realizes $h(M,\gamma,\xi)$.

If $\xi=0$, then $h(M,\gamma,0)$ is realized by a Heegaard splitting $(M,S)$.
\end{lemma}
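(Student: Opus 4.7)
The plan is to obtain both statements as applications of the amalgamation operation together with Lemma~\ref{lem:amalg} and Remark~\ref{rem:amalgamationhomology}. The key observation is that Remark~\ref{rem:amalgamationhomology} shows that amalgamating along a \emph{null-homologous} subcollection of the thin surface leaves the homology class unchanged, while Lemma~\ref{lem:amalg} shows that the handle number does not increase. Iterating these moves on null-homologous components produces the desired trimmed thin surface.

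For the first statement, fix a Heegaard splitting $(A,B;F,S)$ realizing $h(M,\gamma,\xi)$, and suppose some non-empty subcollection of components $F_1 \subseteq F$ is null-homologous. The plan is to produce collections of components $A_1 \subseteq A$ and $B_0 \subseteq B$ with $F_1 = \bdry_- A_1 \cap \bdry_+ B_0$ and $\bdry_+ A_1 \cap \bdry_- B_0 = \emptyset$, which is exactly the condition required to amalgamate along $F_1$. Provided this can be arranged, amalgamation yields a splitting $(M,F',S')$ with $F' = F \setminus F_1$; by Remark~\ref{rem:amalgamationhomology} one has $[F'] + [F_1] = [F] = \xi$, and since $[F_1]=0$, also $[F']=\xi$; by Lemma~\ref{lem:amalg}, $h(M,F',S') \leq h(M,F,S) = h(M,\gamma,\xi)$. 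Equality then holds by minimality, so $F'$ also realizes $h(M,\gamma,\xi)$. Since $F'$ has strictly fewer components than $F$, iteration terminates at a homologically essential subsurface $\overline{F} \subseteq F$ that still represents $\xi$ and still realizes $h(M,\gamma,\xi)$.

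For the second statement, let $(A,B;F,S)$ realize $h(M,\gamma,0)$; then $[F]=0$ so all of $F$ is null-homologous. Apply the iteration above, choosing subcollections of $F$ to amalgamate along until no components of $F$ remain. The outcome is a splitting with empty thin surface, i.e.\ a linear Heegaard splitting $(M,S')$, whose handle number is bounded by $h(M,\gamma,0)$ and so realizes it.

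The main obstacle in both cases is verifying that, whenever $F_1 \subseteq F$ is null-homologous and non-empty, some amalgamation along a null-homologous subcollection is actually available. The natural candidate is to take $A_1$ (resp.\ $B_0$) to be the set of components of $A$ (resp.\ $B$) having thin boundary meeting $F_1$; this automatically gives $F_1 \subseteq \bdry_- A_1 \cap \bdry_+ B_0$, but possibly strict containment, and the side condition $\bdry_+ A_1 \cap \bdry_- B_0 = \emptyset$ may fail. The plan is to enlarge $F_1$ by absorbing any extra overlap $\bdry_- A_1 \cap \bdry_+ B_0$ and, if necessary, the blocking overlap $\bdry_+ A_1 \cap \bdry_- B_0$ — expanding $A_1$ and $B_0$ accordingly — and iterating until the amalgamation conditions close up. Because each enlargement can only add components of $F$ that lie in the ``null-homologous part'' determined by the original $F_1$ (a separating 3-chain bounded by $F_1$ together with parts of $\bdry M$), the expanded $F_1$ remains null-homologous, and the process terminates since $F$ is finite. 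This closure argument is the technical heart of the proof and is most cleanly set up as an induction on the number of components of $F$.
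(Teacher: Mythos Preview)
Your overall strategy---iterated amalgamation along null-homologous subcollections of $F$, using Lemma~\ref{lem:amalg} and Remark~\ref{rem:amalgamationhomology}---is exactly the paper's approach. The gap is in the verification of the amalgamation hypothesis $\bdry_+ A_1 \cap \bdry_- B_0 = \emptyset$ (and the exact equality $F_1 = \bdry_- A_1 \cap \bdry_+ B_0$). Your proposed ``enlargement'' procedure does not work as stated: when you absorb extra components of $F$ into $F_1$, there is no reason the enlarged collection remains null-homologous. For instance, an $A$-component adjacent to $F_1$ may also be adjacent to a component $F^j$ of $F$ that carries a nontrivial homology class independent of $[F_1]$; adding $F^j$ destroys null-homology. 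Your appeal to a ``separating $3$-chain bounded by $F_1$'' is exactly what fails here: an arbitrary null-homologous $F_1$ bounds only a $3$-chain with possibly varying integer coefficients, not a submanifold, so there is no well-defined ``side'' confining the enlargement.

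The paper avoids this by choosing $F_1$ to be a \emph{minimal} non-empty null-homologous subcollection. Minimality forces $F_1$ to actually bound a submanifold $W\subset M$: from a bounding $3$-chain one extracts the sublevel set $\{n_i\geq 1\}$, whose relative boundary is a non-empty oriented subcollection of $F_1$, hence equals $F_1$ by minimality. Once $F_1=\bdry W$, every compression body adjacent to $F_1$ lies entirely in $W$ or entirely in $M\setminus W$, and the orientation convention forces all of $A_1$ into one side and all of $B_0$ into the other. Both amalgamation conditions then follow immediately from this separation---no enlargement is ever needed. The same minimal-subcollection trick handles the $\xi=0$ case as well, terminating at $F=\emptyset$.
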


\begin{proof}
Suppose $F$ realizes $h(M,\gamma,\xi)$ with the Heegaard splitting $(M,F,S)$.
Further suppose $F$ contains a non-empty collection of components that is null-homologous.  Then $F$ contains a minimal such collection $F_1$,  for which no proper non-empty subcollection is null-homologous.  In particular, $F_1$ is the boundary of a submanifold of $M$.
Then we have collections of components $A_1$ of $A$ and $B_0$ of $B$ for which $F_1 = \bdry_-A_1 \cap \bdry_+ B_0$.  They have their corresponding collections of components $B_1$ of $B$ and $A_0$ of $A$ where $\bdry_+ A_i = \bdry_- B_i = S_i$ is a collection of components of $S$ for $i=0,1$. 
Suppose $S_0 \cap S_1 \neq \emptyset$ so that $S_0$ and $S_1$ share a component $S^*$.  Then there are components $A^*$ of $A$ and $B^*$ of $B$ so that $S^* = \bdry_+ A^* = \bdry_- B^*$ and $F_1$ has a component in each $\bdry_- A^*$ and $\bdry_+ B^*$.  Thus there is a path in $A^* \cup_{S^*} B^*$ from one side of $F_1$ to the other side of $F_1$.  However that is contrary to $F_1$ bounding a submanifold of $M$.
Thus $S_0 \cap S_1 = \emptyset$ and we may amalgamate $(M,F,S)$ along $F_1$ to obtain the splitting $(M',F',S')$ where $F' = F-F_1$.  By Lemma~\ref{lem:amalg}, $h(M',F',S') \leq h(M,F,S)$.  Since $F_1$ is null-homologous, $[F']=[F]$ as noted in Remark~\ref{rem:amalgamationhomology}.  Since  $F$ realizes $h(M,\gamma,\xi)$, the previous inequality is an equality and $F'$ realizes $h(M,\gamma,\xi)$ too.  This may be repeated until we are left with a Heegaard splitting $(M, \overline{F}, \overline{S})$ that also realizes $h(M,\gamma,\xi)$ where $\overline{F} \subset F$ is a homologically essential subsurface, possibly the empty set.
\end{proof}

\subsection{Conditioned splittings}\label{sec:conditioned}

Given a sutured manifold $(M,\gamma)$, a {\em conditioned} surface $F$ is a decomposing surface of $(M,\gamma)$ such that
\begin{enumerate}
    \item[(C1)] all arcs of $\bdry F$ in any component of $A(\gamma)$ are coherently oriented, and
    \item[(C2)] no non-empty collection of simple closed curves $\bdry F \cap R(\gamma)$ is trivial in $H_1(\bdry M, \gamma)$.
\end{enumerate}
We further say $F$ is {\em well-conditioned} if
\begin{enumerate}
    \item[(C3)] $F$ is homologically essential.
\end{enumerate}
A Heegaard splitting $(M,F,S)$ of $(M,\gamma)$ is {\em conditioned} if $F$ is conditioned, and it is {\em well-conditioned} if $F$ is well-conditioned.

\begin{lemma}\label{lem:realizeC1}
Suppose $\xi$ is a regular class for $(M,\gamma)$. Then there is a surface $F$ realizing $h(M,\gamma,\xi)$ that satisfies condition (C1).
\end{lemma}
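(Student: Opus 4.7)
The plan is a minimality argument. Among all decomposing surfaces $F$ representing $\xi$ that achieve $h(M,F) = h(M,\gamma,\xi)$, pick one minimizing $n(F) := |\bdry F \cap A(\gamma)|$, the total number of arcs of $\bdry F$ lying in annular sutures. I claim such a minimizer satisfies (C1). Suppose not: some component $A$ of $A(\gamma)$ contains adjacent arcs $\alpha_+, \alpha_- \subset \bdry F$ of opposite orientation, cobounding a rectangle $R \subset A$ with $\bdry R = \alpha_+ \cup \beta_+ \cup \alpha_- \cup \beta_-$ and $\beta_\pm \subset \bdry R_\pm$.

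Construct a new surface $F'$ as follows. Let $\widehat{R}$ be a properly embedded disk obtained from $R$ by pushing its interior slightly into $M$ while keeping $\bdry \widehat{R} = \bdry R$, so that $\widehat{R}$ meets $F$ transversely in two arcs parallel to $\alpha_\pm$. Form $F' := F \dcsum \widehat{R}$. Because $\alpha_+$ and $\alpha_-$ are oppositely oriented on $\bdry F$, the oriented resolution along the two intersection arcs absorbs both $\alpha_+$ and $\alpha_-$, yielding $\bdry F' \cap A = (\bdry F \cap A) \setminus \{\alpha_+,\alpha_-\}$ and $n(F') = n(F) - 2$. Moreover $[F'] = [F] + [\widehat{R}] = [F]$ in $H_2(M, \bdry M)$, since $\widehat{R}$ is isotopic rel $\bdry$ to the disk $R \subset \bdry M$ and hence null-homologous.

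The crux is to verify that $F'$ still realizes the handle number, i.e.\ $h(M,F') \leq h(M,F)$. For this one shows that $M \cut F$ is obtained from $M \cut F'$ by decomposition along a single product disk $P$ that records the surgery: $P$ sits in $M \cut F'$ parallel to an inward-pushed copy of $R$, with its four boundary arcs distributed as two arcs on $R_\pm(M \cut F')$ (coming from $\beta_\pm \subset R_\pm(M)$) together with two spanning arcs through an annular suture of $M \cut F'$ (crossed transversely by $P$ near $\alpha_+$ and $\alpha_-$). Applying Lemma~\ref{lem:productdecompgivesnondecreasinghandlenumber} to this product decomposition yields $h(M \cut F, 0) \geq h(M \cut F', 0)$, so $h(M, F) \geq h(M, F')$. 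Hence $F'$ realizes $h(M,\gamma,\xi)$ while $n(F') < n(F)$, contradicting the minimality of $n(F)$ and establishing (C1).

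The principal obstacle is the local verification that $P$ is a genuine product disk in $M \cut F'$ and that $(M \cut F') \cut P$ agrees with $M \cut F$ as sutured manifolds. This is careful bookkeeping at $R$ using the definition of sutured decomposition: one must track how the rectangles of $A$, the strips $\nbhd(F_+ \cap R_-)$ and $\nbhd(F_- \cap R_+)$, and the two sides of $P$ assemble into the annular sutures of $M \cut F$. The opposite orientations of $\alpha_\pm$ on $\bdry F$ are essential — only this orientation mismatch allows the two sides of $P$ to meet the correct pieces of $R_\pm(M \cut F)$ and of the new annular sutures created around $\alpha_+$ and $\alpha_-$ in $M \cut F$.
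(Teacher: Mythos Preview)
Your approach is essentially the same as the paper's: both eliminate a pair of adjacent anti-parallel arcs by a local modification of $F$ and then invoke Lemma~\ref{lem:productdecompgivesnondecreasinghandlenumber} (the new decomposed manifold product-decomposes to the old one) to show the handle number does not increase. The paper phrases the modification as a \emph{boundary tubing} of $F$ along the core arc $\alpha\subset R$ joining the two anti-parallel arcs; the resulting surface $F_1$ carries a boundary-compressing disk $\delta_\alpha$ that becomes the product disk in $M\cut F_1$, and decomposing along it recovers $M\cut F$.

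One technical point in your write-up deserves correction. As stated, the double curve sum $F \dcsum \widehat{R}$ is not well-defined: you require $\bdry\widehat{R}=\bdry R$, but $\bdry R$ contains $\alpha_\pm \subset \bdry F$, so $F$ and $\widehat{R}$ share boundary arcs and are not in general position --- hence there are no transverse intersection arcs along which to perform the oriented resolution. What you actually want is the oriented corner-smoothing of $F\cup\widehat{R}$ along the shared arcs $\alpha_\pm$ (which is consistent precisely because $\alpha_+$ and $\alpha_-$ are oppositely oriented in $\bdry F$). That surface is isotopic to the paper's boundary-tubed $F_1$, your product disk $P$ is the paper's $\delta_\alpha$, and the remainder of your argument then matches the paper's.
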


\begin{proof}
Let $F_0$ be a decomposing surface for $(M,\gamma)$ that represents $\xi$.
Suppose two adjacent arcs of $F_0 \cap A(\gamma)$ are anti-parallel and joined by an arc $\alpha$ of the core suture.
Then let $F_1$ be the surface obtained by a boundary tubing of $F_0$ along $\alpha$ that is disjoint from the rectangle of $A(\gamma)$ defined by alpha and these two arcs.  Then $F_1$ has a boundary compressing disk $\delta_\alpha$ that crosses $\alpha$ once and boundary compresses $F_1$ back into $F_0$. 
(Note that $F_1$ also represents $\xi$.)    In the result $(M_1,\gamma_1)$ of the sutured manifold decomposition of $(M,\gamma)$ along $F_1$, $\delta_\alpha$ is a product disk.  Furthermore, the decomposition along $\delta_\alpha$ results in the sutured manifold $(M_0, \gamma_0)$ obtained from decomposing $(M,\gamma)$ along $F_0$.  Consequently, $(M_1,\gamma_1)$ can be regarded as being obtained from $(M_0,\gamma_0)$
 by regluing a product disk corresponding to $\delta_\alpha$.  
See Figure~\ref{fig:realizeC1}.  By Lemma~\ref{lem:productdecompgivesnondecreasinghandlenumber}, $h(M_1, \gamma_1,0) \leq h(M_0, \gamma_0,0)$.  In particular, there is a Heegaard surface $S_1$ of $(M_1, \gamma_1)$  such that $h(M,F_1,S_1) \leq h(M,F_0, S_0)$ for any Heegaard surface $S_0$ of $(M_0, \gamma_0)$.

\begin{figure}
    \centering
    \includegraphics[width=.9\textwidth]{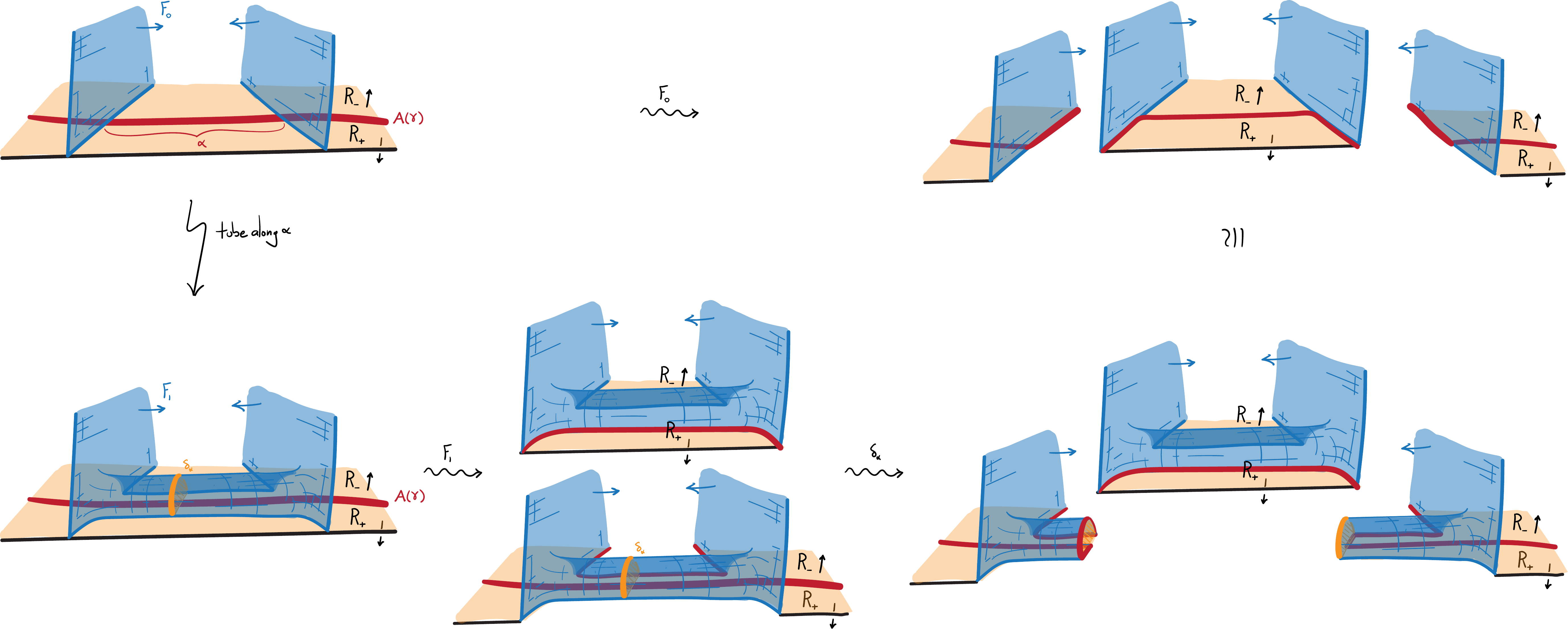}
    \caption{Left: A boundary tubing of $F_0$ along $\alpha$ produces $F_1$ with its boundary compressing disk $\delta_\alpha$.  Center: Upon decomposing $(M,\gamma)$ along $F_1$ to obtain $(M_1, \gamma_1)$, the disk $\delta_\alpha$ becomes a product disk.  Right: The result of decomposing $(M_1, \gamma_1)$ along the product disk $\delta_\alpha$ is homeomorphic to $(M_0,\gamma_0)$ obtained by decomposing $(M,\gamma)$ along $F_0$. }
    \label{fig:realizeC1}
\end{figure}

Hence it follows that $h(M,\gamma, \xi)$ is realized by a surface $F$ whose arcs of intersection with $A(\gamma)$ are coherently oriented in any component of $A(\gamma)$.
\end{proof}

\begin{lemma}\label{lem:dcsumheegsplitting}
Let $F$ be a decomposing surface for a sutured manifold $(M,\gamma)$ that meets every toroidal suture and satisfies property (C1).
Let $R_0$ be a component of $R=R(\gamma)$, let $F_{R_0}$ be the double curve sum of $F$ with a push-off of $R_0$, and let $F_{R_0}'$ be the result of discarding any components of $F_{R_0}$ that are $\bdry$--parallel into $R_0$.
Then
\begin{enumerate}
    \item $F_{R_0}'$ is homologous to $F$,
    \item $|F_{R_0}' \cap R| \leq |F \cap R|$,
    \item $|F_{R_0}'| \leq |F|$, and
    \item $h(M,F_{R_0}') \leq h(M,F)$.
\end{enumerate}
Furthermore, 
\begin{enumerate}
    \item[(2')] $|F_{R_0}' \cap R| < |F \cap R|$ if some component of $R_0 \cut F$ gives the null-homology in $H_1(M,\gamma)$ of a non-empty collection of simple closed curves of $\bdry F \cap R_0$, and
    \item[(3')] $|F_{R_0}'| < |F|$ if some component of $R_0 \cut F$ meets two components of $F_\pm$ with consistent orientations.
\end{enumerate}
\end{lemma}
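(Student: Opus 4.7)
The plan is to push $R_0$ slightly into the interior of $M$ to obtain a properly embedded oriented surface $R_0'$ with $\bdry R_0' \subset A(\gamma)$ parallel to $\bdry R_0$, then form $F_{R_0} = F \dcsum R_0'$ and discard $\bdry$-parallel components to produce $F_{R_0}'$. Condition (C1) ensures that $F \cap R_0'$ is a collection $\Gamma$ of properly embedded arcs and circles parallel to $\bdry F \cap R_0$. For (1), since $R_0 \subset \bdry M$ we have $[R_0, \bdry R_0] = 0$ in $H_2(M, \bdry M)$, and the product collar between $R_0$ and $R_0'$ gives $[R_0'] = 0$; additivity of the double curve sum then yields $[F_{R_0}] = [F]$, and each discarded $\bdry$-parallel component cobounds a product region with a portion of $R_0$ and is therefore null-homologous, giving $[F_{R_0}'] = [F]$.

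For (2), (3), and their strict versions, I would analyze the smoothing locally, using the fact that every component $P$ of $R_0 \cut F$ has a pushed-off copy $P'$ sitting inside $R_0'$ whose boundary curves are identified with components of $\bdry F \cap R_0$. After smoothing along $\Gamma$, $P'$ is reattached to sheets of $F$ along these curves. When a component of $F_{R_0}$ consists solely of such a $P'$ joined to $F$ by a product bridge, it is $\bdry$-parallel into $R_0$ and is discarded in passing to $F_{R_0}'$, removing the corresponding curves from $\bdry F \cap R$; this will give (2) and (2'). Similarly, if $P'$ attaches to two distinct components of $F_\pm$ with coherent orientations, they are merged into a single component of $F_{R_0}'$, giving (3) and (3').

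For (4), the main tool is Lemma~\ref{lem:commutativityoftransversesumdecomp} applied to the transverse pair $F, R_0'$: decomposing along $F_{R_0}$ followed by the induced product decomposition $\delta$ agrees with decomposing first along $F$ and then along $R_0' \cut F$, and the latter consists entirely of cutting off product collars of components of $R_0 \cut F$; by Lemma~\ref{lem:productdecompgivesnondecreasinghandlenumber} this does not change the handle number. Passing from $F_{R_0}$ to $F_{R_0}'$ only removes product pieces (of handle number $0$) and possibly permits further product amalgamations that, again by Lemma~\ref{lem:productdecompgivesnondecreasinghandlenumber}, can only decrease the handle number. The hard part will be carefully tracking the correspondence between components of $R_0 \cut F$, pieces of $R_0' \cut F$, and the resulting components of $F_{R_0}$, so as to precisely identify when a component becomes $\bdry$-parallel (needed for the strict statements (2') and (3')) and to verify that the induced modification of the sutured decomposition consists only of product decompositions and the discarding of product pieces.
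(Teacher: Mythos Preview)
Your plan is essentially the paper's proof: push $R_0$ off to $R_0'$, form the double curve sum, and for (4) run Lemma~\ref{lem:commutativityoftransversesumdecomp} on the pair $F, R_0'$ to compare the two routes to the common decomposed manifold $(M',\gamma')$.  The analyses you outline for (1)--(3), (2'), (3') are exactly what the paper does with the collar $C = R_0 \times [0,1]$ and the observation $F\cut C = F_{R_0}\cut C = F_{R_0}'\cut C$.

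One small correction in the logic of (4): Lemma~\ref{lem:productdecompgivesnondecreasinghandlenumber} is not what gives ``cutting off product collars does not change the handle number.''  That equality $h(M',\gamma',0) = h(M_F,\gamma_F,0)$ holds for the more elementary reason that decomposing $(M_F,\gamma_F)$ along the boundary-parallel surface $R_0'\cut F$ yields $(M_F,\gamma_F)$ together with a disjoint product sutured manifold (handle number $0$).  Lemma~\ref{lem:productdecompgivesnondecreasinghandlenumber} should instead be applied to the \emph{other} leg of the diagram, the product decomposition $\delta$ of $(M_{F_{R_0}},\gamma_{F_{R_0}})$, to obtain the inequality $h(M_{F_{R_0}},\gamma_{F_{R_0}},0) \leq h(M',\gamma',0)$.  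Chaining these gives $h(M,F_{R_0}) \leq h(M,F)$ as desired.  (Similarly, $h(M,F_{R_0}) = h(M,F_{R_0}')$ is an equality, not just an inequality, since discarding $\bdry$-parallel components merely reabsorbs product pieces.)
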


\begin{proof}
Fix a collar $C = R_0 \times [0,1]$ in $M$ where $R_0 \times \{0\} = R_0$ and $\bdry R_0 \times [0,1]$ is contained in $A(\gamma)$ so that $F \cap C$ is a collar in $F$ of $F \cap R_0$.
In particular, $F \cap C$ may be identified with $(\bdry F \cap R_0) \times [0,1]$.  
Then $F_{R_0}$ is the double curve sum  $R_0\times \{1/2\} \dcsum F$.  
 Since $F$ satisfies property (C1), so does $F_{R_0}$.
Furthermore, since $R_0\times \{1/2\}$ is null-homologous in $H_2(M,\gamma)$, $F_{R_0}$ is homologous to $F$.
Then let $F_{R_0}'$ be $F_{R_0}$ with any component that is $\bdry$--parallel into $R_0$ discarded, so $F_{R_0}'$ is homologous to $F$ too.  Observe that  $F \cut C = F_{R_0} \cut C = F_{R_0}' \cut C$.  See Figure~\ref{fig:peeling}.

\begin{figure}
    \centering
    \includegraphics[width=.9\textwidth]{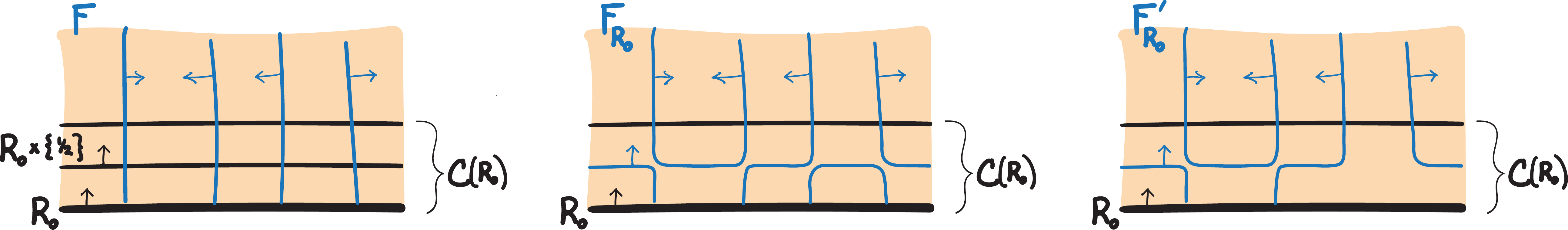}
    \caption{The surfaces $F$, $F_{R_0}$, and $F_{R_0}'$ (assuming the lower-left piece of $F_{R_0}$ is not parallel into $R_0$).}
    \label{fig:peeling}
\end{figure}

Since $F \cap C$ is a collar of $\bdry F \cap R_0$,  $|F| = |F \cut C| = |F_{R_0} \cut C|$.  Since the components of $F_{R_0}$ that meet $\bdry F$ but not $F \cut C$ are necessarily $\bdry$--parallel into $R_0$ since they are disjoint from $R_0 \times \{1\}$. (Otherwise they would meet $F \cut C$).  These are the components that are discarded to form $F_{R_0}'$. Consequently,  $|F_{R_0}' \cap R| \leq |F \cap R|$.  
Now suppose there is a component $Q$ of $R_0 \cut F$ such that $\bdry Q$ is a union of components $\mathcal{C}$ of $\bdry F$ (with induced boundary orientation) and possibly some components of $\bdry R_0$ (with any orientation).  Then $[\mathcal{C}] = \bdry [Q]$ in $H_1(\bdry M, \gamma)$.  Furthermore, one component of the double curve sum $F_{R_0}$ is formed from the union of $Q \times \{1/2\}$ and the annular collars $\mathcal{C} \times [0,1/2]$ in $F$. This component is $\bdry$--parallel to $Q \subset R_0$ and is discarded when forming $F_{R_0}'$.  Thus $\bdry F_{R_0}'$ is a proper subcollection of $\bdry F_{R_0}$, and hence $|F_{R_0}' \cap R| < |F \cap R|$.

As $F_{R_0}'$ is the union of the components of $F_{R_0}$ that meet $F \cut C$, it has at most as many components as $F$.  Hence $|F_{R_0}'| \leq |F|$. However, further observe that if some component $Q$ of $R_0 \cut F$ meets two components of $F_\pm$ with consistent orientations (that is, if say $R_0 \subset R_-$, then this component $Q$ should meet two components of $F_-$), then in the double curve sum $Q \times \{1/2\}$ is joined to $(Q \cap F) \times [1/2,1]$ which then joins to the corresponding components of $F \cut C$.  Hence $F_{R_0}'$ will have fewer components than $F_{R_0}' \cut C$.  Since $|F| = |F \cut C|$ and $F \cut C = F_{R_0}' \cut C$, we have that $|F_{R_0}'| < |F|$.

Finally, to show that $h(M,F_{R_0}') \leq h(M,F)$, observe that $h(M,F_{R_0})=h(M,F_{R_0}')$ immediately follows from the 
definition of $F_{R_0}'$.   So we aim to show that $h(M,F_{R_0}) \leq h(M,F)$.
Let $(M_{F_{R_0}}, \gamma_{F_{R_0}})$ be the sutured manifold decomposition of $(M,\gamma)$ along $F_{R_0}$.  
Let $R_{1/2} = R_0 \times \{1/2\}$. Lemma \ref{lem:commutativityoftransversesumdecomp} implies we have the following diagram:
\[\begin{tikzcd}
	&& {(M_{R_{1/2}},\gamma_{R_{1/2}})} \\
	{(M,\gamma)} && {(M_{F_{R_0}},\gamma_{F_{R_0}})} && {(M',\gamma')} \\
	&& {(M_F,\gamma_F)}
	\arrow["R_{1/2}", squiggly, from=2-1, to=1-3]
	\arrow["F"', squiggly, from=2-1, to=3-3]
	\arrow["{F_{R_0}}", squiggly, from=2-1, to=2-3] 
	\arrow["{F \cut R_{1/2}}", squiggly, from=1-3, to=2-5]
	\arrow["\delta", squiggly, from=2-3, to=2-5]
	\arrow["{R_{1/2}\cut F}"', squiggly, from=3-3, to=2-5]
\end{tikzcd}\]
where $\delta$ is a set of product disks and annuli in $(M_{F_{R_0}}, \gamma_{F_{R_0}})$ arising from the double curve sum of $F$ and $R_{1/2}$ that produces $F_{R_0}$. We observe that:
\begin{itemize}
    \item $(M_{R_{1/2}},\gamma_{R_{1/2}})$ is homeomorphic to $(M, \gamma) \cup (R_0 \times [0, 1/2], \gamma \cap (R_0 \times [0, 1/2]))$,
    \item $F \cut R_{1/2} \cong F \cup (\bdry F \cap R_0) \times [0, 1/2]$, and
    \item $R_{1/2}\cut F$ is parallel to $R_0 \cut F$ in $(M_F, \gamma_F)$.
\end{itemize}
Hence $(M', \gamma')$ is the union of $(M_F, \gamma_F)$ and a product sutured manifold. So $h(M', \gamma',0) = h(M_F, \gamma_F, 0) = h(M,F)$.
Thus we have 
\[ h(M,F) = h(M_F, \gamma_F, 0) = h(M',\gamma',0) \geq h(M_{F_{R_0}},\gamma_{F_{R_0}},0) = h(M,F_{R_0}) \]
where the inequality follows from 
Lemma~\ref{lem:productdecompgivesnondecreasinghandlenumber}.
\end{proof}

\begin{theorem}\label{thm:wellconditioned}
For any sutured manifold $(M,\gamma)$ and regular class $\xi \in H_2(M,\bdry M)$ there is a well-conditioned Heegaard splitting that realizes $h(M,\gamma,\xi)$.
\end{theorem}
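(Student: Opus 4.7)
The plan is to address the three conditions (C1), (C2), (C3) in sequence, using Lemmas~\ref{lem:realizeC1}, \ref{lem:dcsumheegsplitting}, and \ref{lem:homologicallyessential} respectively, verifying at each step that the properties established earlier persist. The case $\xi=0$ is immediate: Lemma~\ref{lem:homologicallyessential} provides a linear Heegaard splitting realizing $h(M,\gamma,0)$ with $F=\emptyset$, and the empty surface vacuously satisfies all three conditions. So assume $\xi\neq 0$ and begin with a surface $F$ realizing $h(M,\gamma,\xi)$.

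First, apply Lemma~\ref{lem:realizeC1} directly to replace $F$ by one satisfying (C1) while still realizing $h(M,\gamma,\xi)$. The heart of the argument is the second step, an iterative reduction to achieve (C2). While (C2) fails, there is a non-empty subcollection $\mathcal{C}\subset\bdry F\cap R(\gamma)$ with $[\mathcal{C}]=0$ in $H_1(\bdry M,\gamma)$. Since the components of $R(\gamma)$ are separated in $\bdry M$ by $\gamma$, there is a direct-sum decomposition $H_1(\bdry M,\gamma)\cong\bigoplus_{R_0}H_1(R_0,\bdry R_0)$ over the components $R_0$ of $R(\gamma)$, so $\mathcal{C}\cap R_0$ is non-empty and null-homologous in some $H_1(R_0,\bdry R_0)$. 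Hence $\mathcal{C}\cap R_0$ bounds a subsurface of $R_0$; taking any component $Q_0$ of this subsurface exhibits a component of $R_0\cut F$ whose boundary in $H_1(M,\gamma)$ is a non-empty subcollection of $\bdry F\cap R_0$. This triggers condition (2') of Lemma~\ref{lem:dcsumheegsplitting}, producing $F_{R_0}'$ with the same homology class (by (1)), no larger handle number (by (4)), and strictly fewer boundary curves on $R(\gamma)$ (by (2')). So $F_{R_0}'$ still realizes $h(M,\gamma,\xi)$, and since $|F\cap R(\gamma)|$ is a non-negative integer, iteration terminates with a surface satisfying both (C1) and (C2). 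Third, apply Lemma~\ref{lem:homologicallyessential} to pass to a homologically essential subsurface $\overline F$ of this surface, achieving (C3).

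The remaining task is to verify preservation of conditions at each transition. Preservation of (C1) by the second step is automatic: the double curve sum with the push-off $R_0\times\{1/2\}$ only introduces new boundary circles (not arcs) in $A(\gamma)$, and discarding $\bdry$-parallel components only removes existing boundary components. Preservation of both (C1) and (C2) by the third step follows because $\bdry\overline F\subset\bdry F$, so any coherence of arcs in $\bdry F\cap A(\gamma)$ and absence of null-homologous non-empty subcollections in $\bdry F\cap R(\gamma)$ restrict to $\overline F$. The most delicate point I expect is the justification in the second step that condition (2') of Lemma~\ref{lem:dcsumheegsplitting} can always be triggered whenever (C2) fails; this depends on the direct-sum decomposition of $H_1(\bdry M,\gamma)$ to localize the null-homology to a single component of $R(\gamma)$ and then on extracting a single component of the bounding subsurface as the required witness in $R_0\cut F$.
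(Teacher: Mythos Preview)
Your argument has a genuine gap in the second step. You claim that if a non-empty collection $\mathcal{C}$ of simple closed curves of $\bdry F\cap R_0$ is null in $H_1(R_0,\bdry R_0)$, then a component $Q_0$ of the subsurface it bounds is a component of $R_0\cut F$, so that condition~(2') of Lemma~\ref{lem:dcsumheegsplitting} applies. But $Q_0$ is a component of $R_0\cut\mathcal{C}$, not of $R_0\cut F$: its interior may well contain other pieces of $\bdry F\cap R_0$ --- further closed curves not in $\mathcal{C}$, or arcs of $\bdry F$ that cross into $A(\gamma)$. Condition~(2') requires a component of $R_0\cut F$ whose entire boundary is a union of closed curves of $\bdry F$ and components of $\bdry R_0$; if every region of $R_0\cut\mathcal{C}$ adjacent to $\mathcal{C}$ is traversed by an arc of $\bdry F\cap R_0$, no such component exists. (A concrete model: take $R_0$ a four-holed sphere, $c$ a separating circle in $\bdry F\cap R_0$, and arcs of $\bdry F$ in both complementary pairs of pants; then $c$ violates (C2) yet no component of $R_0\cut F$ triggers~(2').)

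The paper's proof confronts exactly this obstruction. It takes a minimal bounding region $Q$ and splits into cases: if the interior of $Q$ is disjoint from $\bdry F_0$, Lemma~\ref{lem:dcsumheegsplitting} applies as you intend; if not, the paper locates a component $C_0$ of $\bdry F_0$ inside $Q$ approachable from $\mathcal{C}$ on one side, and performs a \emph{boundary tubing} along an arc $\alpha$ joining them. This tubing produces a surface $F_\alpha$ whose decomposition differs from that along $F_0$ only by a product disk, so Lemma~\ref{lem:productdecompgivesnondecreasinghandlenumber} preserves the handle number while $|F_\alpha\cap R|<|F_0\cap R|$. Your iterative scheme needs this second mechanism (or an equivalent) to terminate.
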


\begin{proof}
By Lemma~\ref{lem:realizeC1}, there is a surface realizing $h(M,\gamma,\xi)$ that satisfies condition (C1).
Assume that $F_0$ has been chosen among all such surfaces $F$ to minimize the complexity $(|F|, |F \cap R|)$ lexicographically. 

Suppose that there is a non-empty collection of simple closed curves $\bdry F_0 \cap R(\gamma)$ is trivial in $H_1(\bdry M, \gamma)$. Then there is a component $R_0$ of $R$ that contains such a collection.  In particular, there is an oriented collection $\mathcal{C} = C_1 \cup \dots \cup C_n$ of components of $\bdry F_0$ in $R_0$ so that $\mathcal{C}$ bounds (with perhaps some components of $\bdry R_0$) a connected subsurface $Q$ of $R_0$ that contains no other such collection. If the interior of $Q$ is disjoint from $\bdry F_0$, then Lemma~\ref{lem:dcsumheegsplitting} produces a surface with smaller complexity.  Hence $\bdry F_0$ must intersect the interior of $Q$.  In particular there is a component $C_0$ of $\bdry F_0$ (possibly an arc)  in the interior of $Q$ and an arc $\alpha$ embedded in $Q$ running from $\mathcal{C}$ to the same side of $C_0$ that is otherwise disjoint from $\bdry F_0$.  
(Due to condition (C1), all arcs of $Q \cap \bdry F_0$ must meet any component of $\bdry R_0$ in $\bdry Q$ coherently.
So if there is an arc $\alpha'$ embedded in $Q$ running from $\mathcal{C}$ to an arc of $Q \cap \bdry F_0$, then there is an arc $\alpha$ as described.)
A boundary tubing of $F_0$ along $\alpha$ produces a surface $F_\alpha$ so that $M \cut F_\alpha$ decomposed along a product disk $\delta_\alpha$ (the boundary compression of the tube along $\alpha$) gives the same as $M \cut F_0$, see Figure~\ref{fig:boundarytubing}.    So by Lemma~\ref{lem:productdecompgivesnondecreasinghandlenumber} $F_\alpha$ must also realize the handle number.  But $F_\alpha$ meets $R$ in fewer components, contrary to assumption.   Thus $F_0$ must satisfy condition (C2).

\begin{figure}
    \centering
    \includegraphics[width=.7\textwidth]{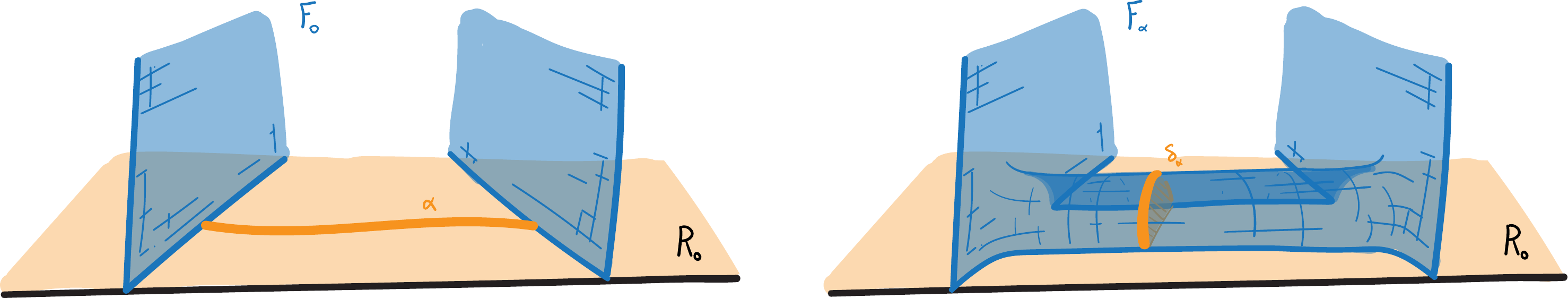}
    \caption{A boundary tubing of a thin surface $F_0$ (left) along an arc $\alpha$ produces a surface $F_\alpha$ (right).  The induced boundary compressing disk $\delta_\alpha$ of $F_\alpha$ is a product disk for the decomposition along $F_\alpha$.}
    \label{fig:boundarytubing}
\end{figure}

 Since any collection of components of a conditioned surface continues to be conditioned,  Lemma~\ref{lem:homologicallyessential} shows that we may discard any homologically inessential components of $F_0$ to obtain a surface $\overline{F}_0$ satisfying condition (C3) that also realizes the handle number.  
  
 In particular, $\overline{F}_0$ is well-conditioned and we have a well-conditioned splitting $(M,\overline{F}_0,\overline{S}_0)$ that realizes $h(M,\gamma,\xi)$.
\end{proof}

\subsection{Tubings of splittings}
Let $(M,F,S)$ be a  Heegaard splitting of $(M,\gamma)$ with a non-trivial compression body $A_i$ containing a $1$--handle $\calH^1_i$ whose feet are in $F$.
Let $B_j$ and $B_k$ be the dual compression bodies that meet the feet of $\calH^1_i$. (Note that we may have $B_j = B_k$.) Let $A_j$ and $A_k$ be the corresponding compression bodies where $\bdry_+ A_j = \bdry_- B_j = S_j$ and $\bdry_+ A_k = \bdry_- B_k = S_k$ are each components of $S$.  Form the compression body $A_i' = A_i \cut \calH^1_i$. In $B_j \cup B_k \cup \calH^1_i$, form a $1$--handle $\calH^1_{jk}$ with feet in $S_j \cup S_k$ from a (narrower) regular neighborhood of the core of $\calH^1_i$ and two vertical arcs in $B_j$ and $B_k$ joining the endpoints of the core to $S_j \cup S_k$.   
Now $B_{jk}' = (B_j \cup B_k \cup \calH^1_i) \cut \calH^1_{jk}$ is a dual compression body and $A_{jk}' = A_j \cup A_k \cup \calH^1_{jk}$ is a compression body that meet along the new thick surface $S'_{jk}$ that is the tubing of $S_j \cup S_k$ along $\bdry \calH^1_{jk}$.   
Also observe that the compression body $A_i' = A_i \cut \calH^1_i$ and the dual compression body $B_{jk}' = (B_j \cup B_k \cup \calH^1_i) \cut \calH^1_{jk}$ meet along the thin surface $F'_{jk}$ that is the tubing of $\bdry_-A_j$ and $\bdry_-A_k$ along $\bdry \calH^1_{jk}$.
This forms a new Heegaard splitting $(M,F',S')$ that we say is the {\em  tubing}  of $(M,F,S)$ along the $1$--handle $\calH^1_i$. 
See Figure \ref{fig:handlextension}.

\begin{figure}
    \centering
    \includegraphics[width=.7\textwidth]{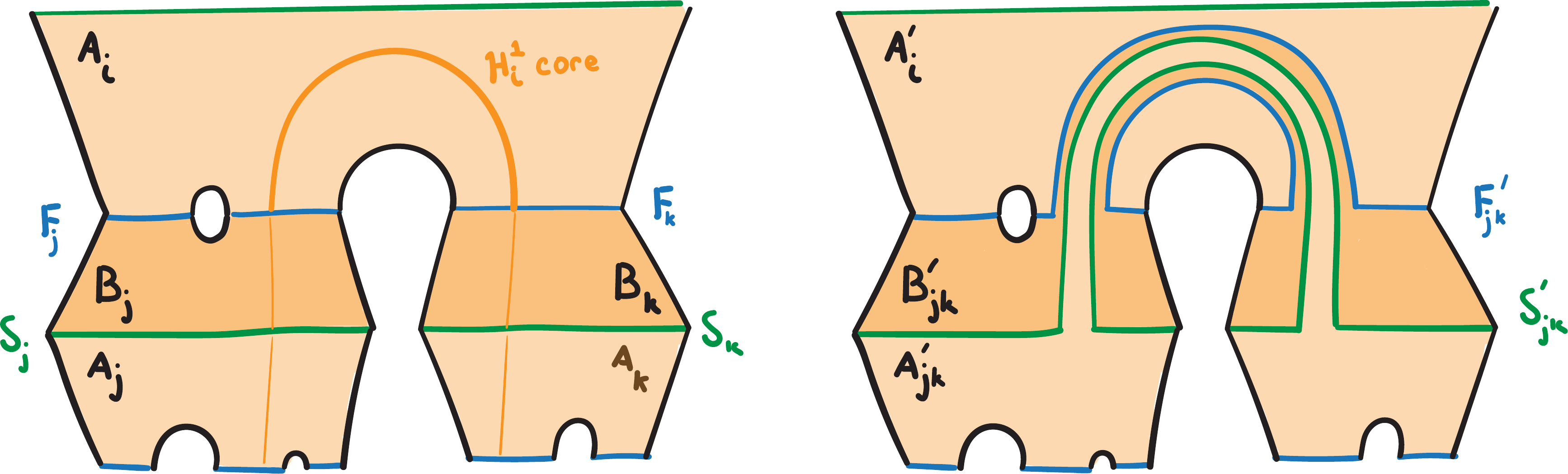}
    \caption{Left: A Heegaard splitting  $(M,F,S)$ with a $1$--handle $\calH^1_i$ in $A_i$ whose feet are in $F$. Right: The new Heegaard splitting $(M,F',S')$  is the {\em  tubing}  of $(M,F,S)$ along the $1$--handle $\calH^1_i$.}
    \label{fig:handlextension}
\end{figure}

\begin{lemma}\label{lem:handleext}
If $(M,F',S')$ is a tubing of $(M,F,S)$, then $[F']=[F]$, $|F'| \leq |F|$, and $h(M,F',S') \leq h(M,F,S)$.  In particular, if the tubing occurs along a handle with feet in distinct components of $F$, then $|F'|=|F|-1$.
\end{lemma}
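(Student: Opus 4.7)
The plan is to handle each of the three claims by tracking how the tubing modifies the splitting locally, with untouched compression bodies cancelling from both sides of each inequality.

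For the homology and component-count claims: the new thin surface $F'$ differs from $F$ by a $1$--surgery that removes the two interior feet disks of $\calH^1_i$ and attaches a tube. This tube together with the two disks forms a $2$--sphere that bounds a $3$--ball in $M$, namely a neighborhood of the core of $\calH^1_{jk}$ together with short arcs in $B_j \cup B_k$ linking $F$ to $S$. Hence $[F]=[F']$ in $H_2(M,\bdry M)$. The same picture shows that $|F'|=|F|-1$ when the feet lie on distinct components of $F$ (the tube merges them into one), and $|F'|=|F|$ otherwise (the tube only raises the genus).

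For the handle number inequality, begin with a handle structure on $(A,B;F,S)$ realizing $h(M,F,S)$ in which $\calH^1_i$ appears as a specified $1$--handle of $A_i$. I would induce a handle structure on $(A',B';F',S')$ by making three local modifications:
\begin{itemize}
    \item[(i)] delete $\calH^1_i$ from $A_i$ to obtain a handle structure on $A_i' = A_i \cut \calH^1_i$ with one fewer $1$--handle;
    \item[(ii)] combine the handle structures of $A_j$ and $A_k$ with the single new $1$--handle $\calH^1_{jk}$ to obtain a handle structure on $A_{jk}' = A_j \cup A_k \cup \calH^1_{jk}$ with exactly one extra $1$--handle;
    \item[(iii)] produce a handle structure on $B_{jk}' = (B_j \cup B_k \cup \calH^1_i) \cut \calH^1_{jk}$ whose total handle count equals $h(B_j)+h(B_k)$.
\end{itemize}
The net effect on the total handle count is $-1 + 1 + 0 = 0$, so the induced handle structure on the new splitting has no more handles than the original, giving $h(M,F',S') \leq h(M,F,S)$.

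The main obstacle is step (iii): verifying that $B_{jk}'$ admits such a handle structure. The key observation is that the region added to $B_j \cup B_k$ in forming $B_{jk}'$ (namely, the complement of the narrower $\calH^1_{jk}$--core neighborhood inside $\calH^1_i$, together with the vertical-arc neighborhoods removed from $B_j$ and $B_k$) consists of product disks and product annuli. Thus $B_{jk}'$ is assembled from $B_j \sqcup B_k$ by product-disk and product-annulus gluings, so by Lemma~\ref{lem:productdecompgivesnondecreasinghandlenumber} its handle number does not exceed $h(B_j)+h(B_k)$. Once this identification is in hand, the lemma follows immediately.
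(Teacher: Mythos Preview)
Your proof is correct and takes essentially the same approach as the paper: both arguments observe that the tubing rearranges the handles of the splitting without changing their total count. The paper compresses your steps (i)--(iii) into the single assertion that ``the initial handle structure of $(M,F,S)$ extends to a handle structure on $(M,F',S')$ with the same number of handles of each dimension''; your route through Lemma~\ref{lem:productdecompgivesnondecreasinghandlenumber} for step~(iii) is slightly indirect (the cleaner observation is that the $2$--handles of $B_j$ and $B_k$, attached along curves in $S_j \cup S_k$ away from the feet of $\calH^1_{jk}$, serve unchanged as the $2$--handles of $B_{jk}'$ built on a collar of $S'_{jk}$), but the conclusion is the same.
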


\begin{proof}
From the definition of tubing along a handle, the initial handle structure of $(M,F,S)$ extends to a handle structure on $(M',F',S')$ with the same number of handles of each dimension.  Hence $h(M,F',S') \leq h(M,F,S)$.    Also, from the definition, if the tubing is done along a handle with feet in distinct components of $F$, then it follows that $|F'|=|F|-1$.
\end{proof}

\begin{remark}
Observe that the operation of tubing a splitting preserves the property of being well-conditioned.  That is, if $(M,F',S')$ is a tubing of a well-conditioned splitting $(M,F,S)$, then $(M,F',S')$ is well-conditioned too.
\end{remark}


\section{Restructuring sutures}\label{sec:restructure}
To get around the issue of non-zero classes that may fail to be regular for link exteriors where the boundary tori are all toroidal sutures, we may instead choose the boundary tori to all belong to $R_\pm$. 
This latter sutured manifold structure allows the circular handle number to be defined for all non-zero classes by ensuring they are all regular.
Moreover, for regular classes of the former ``all toroidal suture'' sutured manifold structure, the resulting circular handle number is the same as with this latter ``all $R_\pm$'' structure.  More generally, we have the following.

\begin{theorem}\label{thm:reassign}
Consider sutured manifolds structures $(M,\gamma)$ and $(M,\gamma^*)$ which agree on $\bdry M \cut T(\gamma)$ and for which $T(\gamma^*)=\emptyset$.

If $\xi \in H_2(M,\bdry M)$ is a regular class for $(M,\gamma)$, 
then either 
\begin{itemize}
    \item $h(M,\gamma,\xi) = h(M,\gamma^*,\xi)$,
    \item 
    there is a component of $T(\gamma)$ in which $\bdry \xi$ and $A(\gamma^*)$ have the same slope, or
    \item there is a component of $T(\gamma)$ in which some component of $R_\pm(\gamma^*)$ is a disk.
\end{itemize}
In particular, if $T(\gamma) \subset R_\pm(\gamma^*)$ then 
$h(M,\gamma,\xi) = h(M,\gamma^*,\xi)$ for every regular class of $(M,\gamma)$.
\end{theorem}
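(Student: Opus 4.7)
My plan is to establish the equality by proving $h(M,\gamma,\xi) \leq h(M,\gamma^*,\xi)$ and $h(M,\gamma^*,\xi) \leq h(M,\gamma,\xi)$ under the assumption that neither exception holds. The ``in particular'' conclusion is then immediate, because $T(\gamma) \subset R_\pm(\gamma^*)$ means $A(\gamma^*)$ is disjoint from every $T \in T(\gamma)$ (so no slope-matching exception applies), and each such torus appears as a single non-disk component of $R_\pm(\gamma^*)$ (so no disk exception applies).

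For $h(M,\gamma^*,\xi) \leq h(M,\gamma,\xi)$, I would take a well-conditioned Heegaard splitting $(F,S)$ of $(M,\gamma)$ realizing $h(M,\gamma,\xi)$, produced by Theorem~\ref{thm:wellconditioned}. Regularity of $\xi$ and condition (C1) force $F$ to meet each $T \in T(\gamma)$ in a nonempty family of coherently oriented parallel essential circles of slope $\sigma_T = [\bdry\xi|_T]$. With the first exception failing, the slope $\tau_T$ of $A(\gamma^*)|_T$ differs from $\sigma_T$; a small isotopy arranges $F \cap A(\gamma^*)|_T$ as a collection of essential spanning arcs, so $F$ qualifies as a decomposing surface for $(M,\gamma^*)$ as well. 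The cut manifold $M_F := M \cut F$ then inherits two sutured structures $\gamma'$ and $\gamma^{*\prime}$ that agree off the pieces of $T(\gamma) \cut F$: under $\gamma'$ each such piece is a single annular suture, whereas under $\gamma^{*\prime}$ it is partitioned into spanning-rectangle annular sutures (cut from $A(\gamma^*)|_T$) and $R_\pm$ pieces (cut from $R_\pm(\gamma^*)|_T$). Using the disk-free assumption, I would argue that $(M_F,\gamma')$ is obtainable from $(M_F,\gamma^{*\prime})$ by successively gluing pairs of spanning rectangles and parallel annuli in the $A(\gamma^{*\prime})$ pieces. By Lemma~\ref{lem:productdecompgivesnondecreasinghandlenumber} these gluings are reverses of product decompositions and do not increase handle number, giving
\[ h(M,\gamma^*,\xi) \leq h((M_F,\gamma^{*\prime}),0) \leq h((M_F,\gamma'),0) = h(M,\gamma,\xi). \]

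For the reverse inequality, I would take a well-conditioned splitting $(F^*,S^*)$ of $(M,\gamma^*)$ realizing $h(M,\gamma^*,\xi)$ and modify $F^*$ to become a decomposing surface for $(M,\gamma)$. Since $\xi$ is regular for $(M,\gamma)$, $F^*$ meets each $T \in T(\gamma)$ in closed curves of total homology $\sigma_T \neq 0$. Using double-curve sums with parallel copies of the $R_\pm(\gamma^*)|_T$ components (in the spirit of Lemma~\ref{lem:dcsumheegsplitting}) together with boundary tubings along arcs through $R_\pm(\gamma^*)|_T$ (in the spirit of Lemma~\ref{lem:handleext}), I would replace $F^*$ by a surface $F$ whose intersection with each $T$ is coherently oriented parallel essential circles of slope $\sigma_T$. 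The non-exception assumptions play an essential role: slope mismatch $\sigma_T \neq \tau_T$ prevents the double curve sums from introducing boundary-parallel components, and absence of disk components in $R_\pm(\gamma^*)|_T$ guarantees the tubing arcs exist in regions where tubing does not kill essential homology. These operations do not increase handle number, so $h(M,\gamma,\xi) \leq h(M,F,S) \leq h(M,\gamma^*,\xi)$.

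The main obstacle is the careful comparison of the two sutured structures $\gamma'$ and $\gamma^{*\prime}$ on $M_F$ and the verification that passing between them amounts to product-disk and product-annulus operations controlled by Lemma~\ref{lem:productdecompgivesnondecreasinghandlenumber}. In particular, one must show that the thick surface $S$ can be adjusted (pushing $\bdry S$ off of $R_\pm$ regions of $\gamma^{*\prime}$ and into the $A(\gamma^{*\prime})$ spanning rectangles) without increasing handle count, and that the exceptional cases genuinely obstruct these moves: matching slopes yield trivial double curve sums and force $F \cap T$ to lie entirely in $A(\gamma^*) \cup R_\pm(\gamma^*)$ in a way incompatible with the compression-body structure, while disk components of $R_\pm(\gamma^*)|_T$ introduce spherical boundary components that block the reinterpretation of handle structures across $\gamma'$ and $\gamma^{*\prime}$.
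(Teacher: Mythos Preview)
Your approach has a genuine gap in the first direction, and is needlessly complicated in the second.

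In the first direction you try to pass between the two sutured structures $\gamma'$ and $\gamma^{*\prime}$ on the \emph{same} manifold $M_F$ by invoking Lemma~\ref{lem:productdecompgivesnondecreasinghandlenumber}. But that lemma compares a sutured manifold with the result of \emph{cutting} it along a product disk or annulus; it cannot relate two sutured structures on an unchanged underlying manifold. Your sentence ``$(M_F,\gamma')$ is obtainable from $(M_F,\gamma^{*\prime})$ by successively gluing pairs of spanning rectangles'' does not parse: any such gluing would alter $M_F$. Even granting some reinterpretation, the inequality you would extract from Lemma~\ref{lem:productdecompgivesnondecreasinghandlenumber} runs the wrong way for the chain $h((M_F,\gamma^{*\prime}),0) \leq h((M_F,\gamma'),0)$ you wrote down.

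What the paper does instead is observe that $(M_F,\gamma')$ and $(M_F,\gamma^{*\prime})$ are \emph{homeomorphic as sutured manifolds}. On each annulus of $T\cut F$ (for $T\in T(\gamma)$), both structures have $F_+$ on one side and $F_-$ on the other with a single annular suture between; the only difference is the width of that suture, which an isotopy supported near $T(\gamma)\cut F$ corrects. The slope exception is exactly what obstructs this: if $\bdry\xi$ and $A(\gamma^*)$ share a slope on some $T$, the curves $\bdry F\cap T$ can lie entirely inside $A(\gamma^*)$ and the decompositions genuinely differ. With homeomorphism in hand one gets equality of linear handle numbers immediately, not merely an inequality.

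For the reverse direction your proposed surgery on $F^*$ via double curve sums and tubings is unnecessary. Once $(M,F^*,S^*)$ is well-conditioned for $(M,\gamma^*)$ (Theorem~\ref{thm:wellconditioned}), condition~(C2) already forces $\bdry F^*\cap T$ to consist of coherently oriented essential circles on each $T\in T(\gamma)$ (a pair of oppositely oriented parallel circles, or a trivial circle, would be null in $H_1(\bdry M,\gamma^*)$). Hence $F^*$ is \emph{already} a decomposing surface for $(M,\gamma)$, and the same homeomorphism observation finishes the argument. Your invocation of Lemma~\ref{lem:handleext} is also off: that lemma concerns tubing a splitting along a $1$--handle of a compression body, not boundary tubing of the thin surface.
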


 \begin{proof}
 Let us assume the last conclusion of the theorem does not occur; that is, in no component of $T(\gamma)$ is some component of $R_\pm(\gamma^*)$ a disk.
 Then this theorem is primarily a consequence of the following observation.  
 Let $\calT$ be a collection of torus boundary components of a $3$--manifold $M$.  Let $F$ be a properly embedded, oriented surface in $M$ whose boundary meets each component of $\calT$ in a non-empty collection of coherently oriented essential simple closed curves.  Then if $(M,\gamma)$ and $(M,\gamma^*)$ are two sutured manifold structures on $M$ that differ only on $\calT$ such that $F$ is a decomposing surface for each, the sutured manifolds that result from their decompositions along $F$ are homeomorphic unless one has annular sutures on a component of $\calT$ of the same slope as the curves of $\bdry F$.

 Now, with the hypotheses of the theorem, suppose $\xi \in H_2(M,\bdry M)$ is a regular class for $(M,\gamma)$.  
 Then for every torus $T$ in $T(\gamma)$, the restriction of 
 $\bdry \xi$ to $H_1(T)$ is not $0$.
   So assume the following:
 \begin{quote} 
 ($\dagger$) 
 The slopes of $\bdry \xi$ and $A(\gamma^*)$ are distinct in each component of $T(\gamma)$.
 \end{quote}
 Hence if $F$ is a decomposing surface for $(M,\gamma)$ representing $\xi$, it meets each component of $T(\gamma)$ in a non-empty collection of coherently oriented simple closed curves.  By the above observation with our assumption ($\dagger$), the results of decomposing each $(M,\gamma)$ and $(M,\gamma^*)$ along $F$ are homeomorphic to the same sutured manifold $(M',\gamma')$.  
 Then for any splitting $(M,F,S)$ of $(M,\gamma)$ realizing $h(M,\gamma,\xi)$ there is a splitting $(M,F,S^*)$ of $(M,\gamma^*)$ with $h(M,F,S^*) = h(M,F,S) = h(M,\gamma,\xi)$.  Hence $h(M,\gamma^*,\xi) \leq h(M,\gamma,\xi)$.
 
On the other hand, suppose $(M,F^*,S^*)$ is a splitting of $(M,\gamma^*)$ realizing $h(M,\gamma^*,\xi)$.  
By Theorem~\ref{thm:wellconditioned}, we may assume $(M,F^*,S^*)$ is well-conditioned.  In particular, this means $\bdry F^*$ is coherently oriented on each component of $T(\gamma)$.
Then, again by the above observation that begins this proof with our assumption ($\dagger$), the results of decomposing each $(M,\gamma)$ and $(M,\gamma^*)$ along $F^*$ are homeomorphic.  Hence there is a splitting $(M,F^*,S)$ of $(M,\gamma)$ with $h(M,F^*,S)=h(M,F^*,S^*)=h(M,\gamma^*,\xi)$, and so $h((M,\gamma,\xi) \leq h(M,\gamma^*, \xi)$.
 
Together this implies that, under assumption ($\dagger$), we have $h((M,\gamma,\xi) = h(M,\gamma^*, \xi)$.
Furthermore, if $T(\gamma) \subset R_\pm(\gamma^*)$, then $A(\gamma^*) \cap T(\gamma) = \emptyset$ so that assumption ($\dagger$) holds for all regular classes $\xi$ of $(M,\gamma)$.
 \end{proof}

A similar proof gives the following.

\begin{cor}
Consider sutured manifolds structures $(M,\gamma_1)$ and $(M,\gamma_2)$ which only disagree on a collection of tori of $\bdry M$ in which $A(\gamma_i)$ are essential annuli.  

If $\xi \in H_2(M,\bdry M)$ is a regular class for $(M,\gamma_1)$ and $(M,\gamma_2)$, 
then either 
\begin{itemize}
    \item $h(M,\gamma_1,\xi) = h(M,\gamma_2,\xi)$ or
    \item 
    there is a torus component of $\bdry M$ in which $\bdry \xi$ and $A(\gamma_i)$ have the same slope for either $i=1,2$. \qed
\end{itemize}

\end{cor}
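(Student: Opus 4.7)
The plan is to proceed in direct analogy with the proof of Theorem~\ref{thm:reassign}. Let $\calT$ denote the collection of torus components of $\bdry M$ on which $\gamma_1$ and $\gamma_2$ disagree; by hypothesis each $T\in\calT$ carries essential annular sutures of slopes $\mu_T^{(1)}$ and $\mu_T^{(2)}$ in the two structures. If on some $T\in\calT$ the $\bdry\xi$-slope coincides with $\mu_T^{(i)}$ for some $i\in\{1,2\}$, then the second bullet holds and we are done. Otherwise set
\begin{quote}
($\dagger'$) on every $T\in\calT$, the slope of $\bdry\xi$ on $T$ is distinct from both $\mu_T^{(1)}$ and $\mu_T^{(2)}$,
\end{quote}
and aim to conclude $h(M,\gamma_1,\xi)=h(M,\gamma_2,\xi)$.

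The central step is the analog of the observation opening the proof of Theorem~\ref{thm:reassign}: any properly embedded oriented surface $F$ meeting each $T\in\calT$ in a nonempty coherently oriented collection of parallel essential simple closed curves of some slope distinct from $\mu_T^{(1)}$ and $\mu_T^{(2)}$ is simultaneously a decomposing surface for $(M,\gamma_1)$ and $(M,\gamma_2)$, and the resulting sutured manifolds $(M,\gamma_1)\cut F$ and $(M,\gamma_2)\cut F$ admit a common sutured manifold obtained from each by a sequence of product disk and product annulus decompositions. Applying Lemma~\ref{lem:productdecompgivesnondecreasinghandlenumber} to both reduction sequences then forces the linear handle numbers of $(M,\gamma_1)\cut F$ and $(M,\gamma_2)\cut F$ to coincide.

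With this observation in hand, Theorem~\ref{thm:wellconditioned} supplies for each $i\in\{1,2\}$ a well-conditioned Heegaard splitting $(M,F_i,S_i)$ realizing $h(M,\gamma_i,\xi)$. By condition (C1) together with ($\dagger'$), the surface $F_i$ meets each $T\in\calT$ in coherently oriented parallel essential simple closed curves of the $\bdry\xi$-slope, which is distinct from $\mu_T^{(3-i)}$. Hence $F_i$ is also a decomposing surface for $(M,\gamma_{3-i})$, and the observation converts $(M,F_i,S_i)$ into a Heegaard splitting of $(M,\gamma_{3-i})$ with the same handle number. This gives $h(M,\gamma_{3-i},\xi)\leq h(M,\gamma_i,\xi)$, and running the argument in both directions yields equality.

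The main obstacle is establishing the key observation precisely. After cutting along $F$, the torus pieces of each $T\in\calT$ remaining in the boundary are $\nu_T$-annuli whose inherited sutured structures from $\gamma_1$ and $\gamma_2$ are generally different, with different numbers and arrangements of spanning arc sutures arising from the original annular sutures of slopes $\mu_T^{(1)}$ and $\mu_T^{(2)}$. Showing that these two configurations reduce to a common sutured manifold via product disk and product annulus decompositions, in a manner compatible with the surrounding $R_\pm$ data, will require a careful combinatorial analysis of how essential annular sutures of one slope restrict under a cut by simple closed curves of another.
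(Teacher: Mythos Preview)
Your overall strategy matches exactly what the paper intends by ``a similar proof'': mirror the argument of Theorem~\ref{thm:reassign}, invoking its key observation about decompositions along $F$ and then using Theorem~\ref{thm:wellconditioned} to supply well-conditioned representatives for the two-sided inequality. However, your formulation of the key observation is weaker than the paper's, and as stated it does not yield the conclusion you draw from it.

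The gap is in your appeal to Lemma~\ref{lem:productdecompgivesnondecreasinghandlenumber}. That lemma gives only the one-directional inequality $h(M',\gamma',0)\geq h(M,\gamma,0)$ when $(M',\gamma')$ is obtained from $(M,\gamma)$ by a product decomposition. So even granting that $(M,\gamma_1)\cut F$ and $(M,\gamma_2)\cut F$ both product-decompose to a common $(N,\delta)$, you would obtain only $h\bigl((M,\gamma_i)\cut F,0\bigr)\leq h(N,\delta,0)$ for each $i$, which does not compare the two to one another. Hence the sentence ``Applying Lemma~\ref{lem:productdecompgivesnondecreasinghandlenumber} to both reduction sequences then forces the linear handle numbers \dots\ to coincide'' is unjustified, and with it the step converting $(M,F_i,S_i)$ into a splitting of $(M,\gamma_{3-i})$ with the same handle number.

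The fix is that the observation opening the proof of Theorem~\ref{thm:reassign} already gives what you need in full strength: under ($\dagger'$) the sutured manifolds $(M,\gamma_1)\cut F$ and $(M,\gamma_2)\cut F$ are \emph{homeomorphic}, not merely reducible to a common object. Concretely, on each annular piece $A$ of a torus $T\in\calT$ (bounded by curves adjacent to $F_+$ and $F_-$), the suture of the decomposed manifold is, up to isotopy, a single annular suture with core parallel to $\bdry A$, independent of the original slope $\mu_T^{(i)}$ so long as $\mu_T^{(i)}\neq\nu_T$. With homeomorphism in hand, your two-directional inequality via Theorem~\ref{thm:wellconditioned} goes through verbatim, and the ``careful combinatorial analysis'' you flag as the main obstacle becomes unnecessary.
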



\section{An upper bound for sutured manifolds without toroidal sutures}
\subsection{Decompositions of compression bodies}

Given a connected compact oriented surface $\Sigma$, the product $P = \Sigma \times I$ is a trivial compression body which has the structure of a product sutured manifold $(P,\gamma)$ where the sutures $\gamma = \bdry \Sigma \times \{0\}$ have annular neighborhoods $A(\gamma) = \bdry \Sigma \times I$.  Here we take $I = [-1,1]$.      Furthermore we have the oriented surfaces  $R_\pm = \Sigma \times \{\pm1\}$ for which the normal orientation is the $I$ direction.
Being a product sutured manifold, $(P, \gamma)$ is taut, e.g.\ \cite[Definition 2.2]{Scharlemann_SM}.

\begin{theorem}\label{thm:cuttoproduct}
Let $\Sigma$ be a connected compact oriented surface other than a sphere.
Let $P = \Sigma \times I$ be the induced product sutured manifold.  
Let $F_0$ be a conditioned surface in $P$.
Then there is a conditioned surface $F$ in $P$ where
\begin{enumerate}
    \item $\bdry F \cut A(\gamma) = \bdry F_0 \cut A(\gamma)$,
    \item $\bdry F$  is homologous to  $[\bdry F_0] + m [\gamma]$ in $H_1(\bdry P)$ for some non-negative integer $m$,
    \item $[F,\bdry F] = [F_0,\bdry F_0]$ in $H_2(P,\bdry P)$, 
    \item $F$ is taut, and
    \item the sutured manifold decomposition of $(P, \gamma)$ along $F$ is again a product.
\end{enumerate}
In particular, if $\Sigma$ is closed so that $A(\gamma) = \emptyset$, then $\bdry F = \bdry F_0$.
\end{theorem}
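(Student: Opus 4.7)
The approach is to construct $F$ by taking the double curve sum of $F_0$ with many parallel push-offs of $R_+$, then invoking Gabai's results on taut decompositions of taut sutured manifolds.

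For each positive integer $m$, let $F_m$ denote the double curve sum $F_0 \dcsum (m\cdot R_+^{\#})$, where $R_+^{\#}$ denotes $m$ parallel push-offs of $R_+$ into a collar of $R_+$ in $P$, all placed in general position with $F_0$. Because $R_+ \subset \bdry P$, each push-off is null-homologous in $H_2(P,\bdry P)$ (as it is represented by a chain lying in $\bdry P$), so $[F_m,\bdry F_m]=[F_0,\bdry F_0]$, establishing (3). The push-offs meet $\bdry P$ only in $A(\gamma)$, hence $\bdry F_m\cut A(\gamma)=\bdry F_0\cut A(\gamma)$ (property (1)), and within $A(\gamma)$ each push-off contributes a parallel copy of the suture core, so $[\bdry F_m]=[\bdry F_0]+m[\gamma]$ in $H_1(\bdry P)$ (property (2)). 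Conditions (C1) and (C2) persist: (C1) holds because $\bdry R_+$ and $\bdry F_0$ are coherently oriented on each component of $A(\gamma)$, so the double curve sum does not introduce anti-parallel arcs; (C2) holds because adding copies of $R_+$ only adds coherently oriented parallel copies of $\bdry R_+$ to $\bdry F_m\cap R(\gamma)$, which cannot produce a new null-homologous subcollection in $H_1(\bdry P,\gamma)$.

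The crucial ingredient, following Gabai~\cite{Gabai-FT3M-I}, is that for $m$ sufficiently large the surface $F_m$ realizes the Thurston norm of $[F_0,\bdry F_0] \in H_2(P,\bdry P)$; hence $F_m$ is taut, giving property (4). The intuition is that double curve summing a conditioned surface in a taut sutured manifold with enough copies of $R_+$ absorbs any inefficiencies from boundary arcs or non-minimal components and yields a norm-minimizing representative. Since $(P,\gamma)$ is a product and therefore taut, Gabai's theorem on taut decompositions now guarantees that decomposing $(P,\gamma)$ along the taut conditioned surface $F=F_m$ produces a taut sutured manifold $(P',\gamma')$.

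The main obstacle is the final step: showing that the taut sutured manifold $(P',\gamma')$ is again a product (property (5)). The plan is to exploit the fact that the product $(P,\gamma)$ has depth zero in Gabai's sutured manifold hierarchy, and a taut decomposition cannot increase depth; hence $(P',\gamma')$ must also be depth zero, i.e., a disjoint union of products. Concretely, tautness implies $\chi(R_+(P'))=\chi(R_-(P'))$, and tracking the Euler characteristic across the decomposition relative to $\chi(R_\pm(P))=\chi(\Sigma)$ together with the incompressibility of $F$ and the trivial product structure of $(P,\gamma)$ forces each component of $(P',\gamma')$ to be a product sutured manifold on a subsurface of $\Sigma$. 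Alternatively, one could appeal to the classification of incompressible, boundary-incompressible surfaces in $\Sigma\times I$ to put $F$ into a normal form as a union of horizontal leaves $\Sigma\times\{t_i\}$ and vertical product disks and annuli, after which the product structure of the decomposition is immediate.
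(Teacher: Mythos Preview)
Your approach is essentially the same as the paper's: both obtain the taut surface $F$ by double-curve-summing $F_0$ with enough copies of $R_+$ (the paper packages this step as an appeal to \cite[Theorem~2.5]{Scharlemann_SM}), both then argue that the resulting decomposition $(P',\gamma')$ is taut, and both finish by invoking Gabai. Two points need tightening.

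First, the raw double curve sum $F_m$ need not itself be taut: it can acquire closed separating components or parallel boundary-compressible pieces. The paper explicitly discards closed separating components and caps off oppositely oriented boundary circles before declaring the surface taut and conditioned. You should do the same; otherwise the claim ``$F_m$ realizes the Thurston norm for $m\gg 0$'' is not literally true.

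Second, and more seriously, your argument for property~(5) has a gap. The assertion that ``a taut decomposition cannot increase depth'' is not a theorem in the literature, and the Euler-characteristic sketch you offer is too vague to carry the weight (equality $\chi(R_+)=\chi(R_-)$ holds for every taut sutured manifold, not just products). The precise statement you need is \cite[Lemma~2.4]{Gabai_detectingfibredlinks} (cf.\ \cite[\S4]{Gabai-FT3M-I}): a taut sutured manifold decomposition of a product sutured manifold is again a product. This is exactly what the paper cites, and it is not a formal consequence of ``depth monotonicity''. Your alternative via Waldhausen's classification of incompressible, $\bdry$-incompressible surfaces in $\Sigma\times I$ would genuinely work and is a different route, but as written you have only gestured at it; if you go that way you must also check that the surface $F$ you produced is $\bdry$-incompressible in $P$, which follows from tautness of the decomposition but is not automatic from tautness of $F$ alone.
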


\begin{proof}
As $P = \Sigma \times I$ and $\Sigma \not \cong S^2$, it is irreducible and $\Sigma \times\{0\}$ is a taut surface.
Setting $y = [F_0, \bdry F_0]\in H_2(P, \bdry P)$ so that $\bdry y = [\bdry F_0]$, \cite[Theorem 2.5]{Scharlemann_SM} 
implies there exists a taut surface $F$ in $P$ so that $\bdry F \cut A(\gamma) = \bdry F_0 \cut A(\gamma)$ and $[F, \bdry F] = y + m [ \Sigma \times \{0\}, \bdry \Sigma \times \{0\}]$ in $H_2(P, \bdry P)$ for some non-negative integer $m$.
In particular, this implies $[\bdry F] = [\bdry F_0] + m [\gamma]$ in $H_1(\bdry P)$ since $\gamma = \bdry \Sigma \times \{0\}$. 
We may further assume that any oppositely oriented components of $\bdry F$ in $A(\gamma)$ or $T(\gamma)$ have been capped off with annuli so that $F$ is a conditioned surface and any closed separating components have been discarded (as done in the proof of \cite[Theorem 2.6]{Scharlemann_SM}).

Let $(P',\gamma')$ be the sutured manifold obtained from decomposing $(P,\gamma)$ along $F$.
Since $\Sigma \times \{\pm1\}$ is a pair of similarly oriented parallel copies of $\Sigma \times \{0\}$, \cite[Theorem 2.5(iii)]{Scharlemann_SM} implies that the double curve sum $F \dcsum \Sigma \times \{\pm 1\}$ is taut in $P$, and hence also taut in $P' = P \cut F$ (again, see also the proof of \cite[Theorem 2.6]{Scharlemann_SM}). In particular, $R(\gamma')$ is taut in $P'$.
Next, note that any sphere in $P'$ bounds a $3$--ball in $P$ (because $P$ is irreducible) that is disjoint from $F$ since $F$ has no closed separating components;
 thus $P'$ is irreducible.
Therefore, by \cite[Definition 2.2]{Scharlemann_SM}, this shows that the sutured manifold $(P',\gamma')$ 
is a taut sutured manifold:  we have that both $P'$ is irreducible and $R(\gamma')$ is taut. 
Thus the sutured manifold decomposition $(P,\gamma) \overset{F}{\leadsto} (P',\gamma')$ is taut.   Now \cite[Lemma 2.4]{Gabai_detectingfibredlinks}  (see also \cite[\S4]{Gabai-FT3M-I}) implies that the resulting sutured manifold is also a product manifold.  Thus $F$ satisfies the conclusion.
\end{proof}

\begin{lemma}
\label{lem:compressionbodymeridianandverticaldecomp}
Let $C$ be a connected compression body with handle number $h(C)$.  
For a handle structure on $C$ realizing $h(C)$, let $\calD$ be a collection of coherently oriented copies of meridional disks that are co-cores of $1$--handles. 
Let $\calA$ be a  collection of  disks and  annuli that are vertical with respect to this handle structure.   
Then the decomposition of $C$ along $\calD \cup \calA$ is a compression body $C'$ with handle number $h(C') = h(C)$ if $\bdry_-C \neq \emptyset$ and $h(C') = h(C)-2$ if $C$ is a handlebody. 
In either case, the handle index is preserved: $j(C') = j(C)$.
\end{lemma}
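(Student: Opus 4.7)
My plan is to prove this in three steps: (1) equip $C'$ with a compression-body structure induced by the handle structure on $C$, (2) show the handle index is preserved via an Euler-characteristic argument, and (3) translate this into the handle-number statements.

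\textbf{Step 1 (compression-body structure on $C'$).} Using the fixed minimal handle structure on $C$, the vertical disks/annuli in $\calA$ sit inside the collar $\bdry_-C\times I$; decomposing along them subdivides the collar into smaller product pieces whose cut faces become new components of $R_\pm(C')$. Each co-core in $\calD$ cuts its $1$--handle into two $D^2\times I$ ``half-handles''; by the coherent orientation the half with cut face on $R_-(C')$ is absorbed into the collar of $R_-(C')$ as a collar of the new disk component, and the opposite half (with cut face on $R_+(C')$) is absorbed into the adjacent region. The upshot is that $C'$ inherits the structure of a (possibly disconnected) compression body.

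\textbf{Step 2 (handle index preservation).} For a minimal compression-body handle structure, the collar contributes $\chi(R_-)$ and each $0$/$1$--handle contributes $\pm 1$, so $\chi(C)=\chi(R_-(C))+h^0(C)-h^1(C)$; together with $h^2=h^3=0$ this yields the uniform identity
\[
j(C) \;=\; \chi(R_-(C))-\chi(C)
\]
(taking $\chi(\emptyset)=0$). Set $S=\calD\cup\calA$. From $C=C'\cup\nbhd(S)$ with $C'\cap\nbhd(S)\cong S\times\{\pm\epsilon\}$, inclusion--exclusion gives $\chi(C')=\chi(C)+\chi(S)$. For the $R_-$-boundary, let $K=\bdry S\cap R_-(C)$. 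Removing an open collar of $K$ from $R_-(C)$ raises $\chi$ by $\chi(K)$ (arc components contribute $1$ each, circle components $0$); the subsequent gluing of $S_-$ along a copy of $K$ subtracts $\chi(K)$; and deleting open annular sutures does not change $\chi$. Hence $\chi(R_-(C'))=\chi(R_-(C))+\chi(S)$, so
\[
j(C')-j(C) \;=\; \bigl(\chi(R_-(C'))-\chi(R_-(C))\bigr)-\bigl(\chi(C')-\chi(C)\bigr) \;=\; \chi(S)-\chi(S)\;=\;0.
\]

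\textbf{Step 3 (from handle index to handle number).} By Section~\ref{sec:handlecounts}, for a minimal compression body $h=j$ when $R_-\neq\emptyset$ and $h=j+2$ when $C$ is a handlebody (the required extra $0$--handle adds $2$ to $h$ but $0$ to $j$). If $\bdry_-C\neq\emptyset$ then $\bdry_-C'\supseteq\bdry_-C\neq\emptyset$, so $h(C')=j(C')=j(C)=h(C)$. If $C$ is a handlebody, the coherently oriented cuts along $\calD$ contribute nonempty $-$--sides to $R_-(C')$, so $\bdry_-C'\neq\emptyset$ and $h(C')=j(C')=j(C)=h(C)-2$.

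The principal difficulty is Step 1: making rigorous the absorption of the ``extra'' half-handles left behind by the co-cores in $\calD$ and confirming that every component of $C'$ is a compression body with the claimed thin boundary. Once that structural claim is in hand, Step 2 is clean Euler-characteristic bookkeeping and Step 3 is immediate.
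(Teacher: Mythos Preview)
Your approach is correct but genuinely different from the paper's. The paper argues constructively: it first disposes of $\calA$ (decomposing along vertical surfaces preserves the handle structure verbatim), then reduces $\calD$ to one disk per parallelism class (adjacent coherently oriented co-cores bound product balls), and finally explicitly rebuilds the handle structure on $C'$. For each cut $1$--handle, one half becomes the collar over the new disk component $D_-$ of $R_-(C')$, while the other half is \emph{retained as a $1$--handle} joining that thickened disk back to the rest; a direct handle count then gives $h(C')=h(C)$, with one $0$/$1$--cancellation available when $C$ is a handlebody. Your route instead extracts the index identity $j(C)=\chi(R_-(C))-\chi(C)$, valid for any compression body, and shows both sides shift by $\chi(S)$ under the decomposition. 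This is slicker for the $j$--statement and avoids all case analysis on $\calA$ versus $\calD$; the paper's argument, on the other hand, actually produces the minimal handle structure on $C'$, which is what one wants downstream (e.g.\ in Theorem~\ref{thm:splitcompressionbody}).

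One point to tighten in your Step 3: from $j(C')=j(C)$ you conclude $h(C')=j(C')$ by saying $\bdry_-C'\neq\emptyset$. What you actually need is that \emph{no component} of $C'$ is a handlebody, since $h=j$ only holds componentwise. This does follow from the structural picture in Step 1 (every component either meets the collar of $R_-(C)$ or is a product ball between coherently oriented co-cores, and in the handlebody case the main piece picks up some $D_-$ provided $\calD\neq\emptyset$), but you should say so explicitly rather than appeal to the global condition $\bdry_-C'\neq\emptyset$. Relatedly, note that both your argument and the paper's tacitly assume $\calD\neq\emptyset$ in the handlebody case.
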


\begin{proof}
First decompose $C$ along $\calA$.  Since $\calA$ is a collection of vertical surfaces, $C \cut \calA$ is again a compression body.  Moreover, viewing $C\cut \calA$ as a submanifold of $C$, it inherits the handle structure of $C$. In particular, $h(C \cut \calA) = h(C)$ and $\calD$ continues to be a collection of coherently oriented copies of meridional disks that are co-cores of $1$-handles of this inherited handle structure.  So we continue the argument with the compression body $C\cut \calA$. Hence we may as well proceed assuming $\calA = \emptyset$. 

One observes that the region between a pair of parallel and adjacent disks in $\calD$ yields a product ball after the decomposition.  Since products may be components of compression bodies and they contribute nothing towards the handle number, we further reduce to assuming $\calD$ has no parallel disks. Say $d= |\calD|$. 
Then $C$ has a handle structure in which the disks of $\calD$ are co-cores of some of the handles.  Decomposing $C$ along $\calD$ therefore cuts these handles open. Forgetting about any sutured manifold structure, $C'=C \cut \calD$ is a compression body with handle number  $h(C)-d$.
Each disk of $\calD$ leaves two impressions in $\bdry C'$.  So, keeping track of the sutures induced from the decomposition of $C$ along $\calD$, we have a sutured manifold structure on $C'$ where $R_-(C')$ is the disjoint union of $R_-(C)$ and a  collection of $d$ disks in $\bdry C'$, one for each disk in $\calD$.  $R_+(C')$ is then just $\bdry C' - \nbhd(R_-(C'))$; indeed $R_+(C')$ is obtained by adding a copy of each disk in $\calD$, with the appropriate orientation, to what is left of $R_+(C)$.  Thus, as a sutured manifold, $C'$ is a compression body obtained by joining a thickening of these $d$ disks of to a thickening of $R_-(C)$ each with a $1$-handle, if $R_-(C) \neq \emptyset$.  Thus $h(C') = h(C)$.    
Figure~\ref{fig:CBHBdecomposition}(Left) illustrates this situation.

\begin{figure}
    \centering
    \includegraphics[width=.7\textwidth]{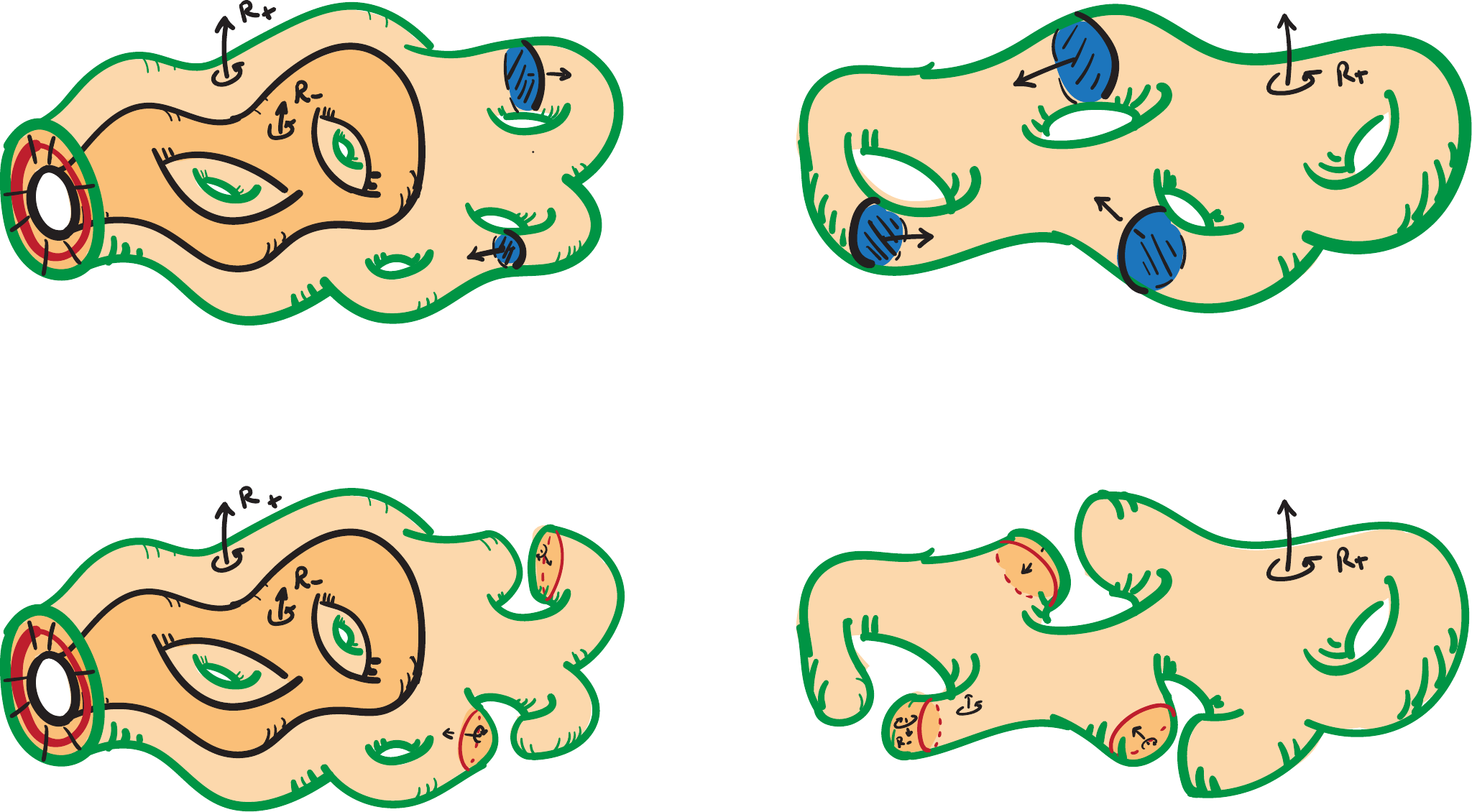}
    \caption{Left: A compression body cut along a collection $\calD$.  Right: A handlebody cut along another collection $\calD$}
    \label{fig:CBHBdecomposition}
\end{figure}

If $R_-(C) = \emptyset$ (so that $C$ is a handlebody) then $R_-(C')$ is a collection of $d$ disks in $\bdry C'$, one for each disk in $\calD$.  Then, as a sutured manifold,
$C'$ is obtained by first joining a thickening of these $d$ disks to the $0$--handle of $C$ with $d$ $1$--handles
and then adding the remaining $h(C)-d-1$ $1$--handles that were disjoint from the disks of $\calD$. Now, with a choice of one of the $1$--handles from a thickened disk of $R_-(C')$ to the $0$--handle, the other $1$--handles may be slid off the $0$--handle and down this $1$--handle to the thickened disk.  Finally, the $0$--handle and this $1$--handle may be cancelled.  Thus $h(C') = h(C)-2$. Figure~\ref{fig:CBHBdecomposition}(Right) illustrates this situation.
\end{proof}

The following theorem is a generalization of \cite[Lemma 2.4]{Goda-HSforSMandMuraSum} which says that a sutured manifold is a compression body if and only if it is obtained after decomposing  a compression body along a product disk, and that the handle number is preserved under such decomposition.

\begin{theorem}\label{thm:splitcompressionbody}
Let $C$ be a connected compression body with sutures $\gamma$ and handle number $h(C)$. 
Let $F_0$ be a 
conditioned surface.
Then there is a conditioned  
surface $F$ in $C$ such that $\bdry F \cut A(\gamma)= \bdry F_0 \cut A(\gamma)$ and $[F, \bdry F] = [F_0, \bdry F_0] + m[\bdry_+ C, \bdry(\bdry_+ C)]$ in $H_2(C, \bdry C)$ for some integer $m>0$ with the following property. The decomposition of $C$ along $F$ is a  compression body $C'$ with handle number  $h(C') = h(C)$ if $\bdry_-C \neq \emptyset$ and $h(C')=h(C)-2$ if $C$ is a handlebody.  In either case the handle index is preserved: $j(C') = j(C)$.
\end{theorem}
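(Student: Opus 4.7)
My plan is to reduce to the product case (Theorem~\ref{thm:cuttoproduct}) by exploiting a minimal handle structure on $C$ together with Lemma~\ref{lem:compressionbodymeridianandverticaldecomp}. I will first fix a minimal handle structure on $C$ realizing $h(C)$. When $\bdry_-C \neq \emptyset$, this consists of a collar $P_0 = \bdry_-C \times [-1,1]$ (with $\bdry_-C = \bdry_-C \times \{-1\}$) together with $1$--handles $h_1, \ldots, h_k$ attached to $\bdry_-C \times \{1\}$. When $C$ is a handlebody, I instead start with a $0$--handle $H^0$ and some $1$--handles, then absorb $H^0$ together with one $1$--handle into a pseudo-collar $D^2 \times [-1,1]$ so that I am back in the previous situation (with one fewer $1$--handle). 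Let $\calD = D_1 \cup \cdots \cup D_k$ denote the co-cores of the remaining $1$--handles.

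Next I will isotope $F_0$ into standard position with respect to this handle structure so that each $1$--handle $h_i$ meets $F_0$ transversely in $n_i \geq 0$ parallel copies of the co-core $D_i$ (any disks parallel into the attaching region of $h_i$ can be removed by isotopy into $P_0$). The remainder $F_P := F_0 \cap P_0$ is then a properly embedded surface in $P_0$. After performing boundary-tubings analogous to those in the proof of Theorem~\ref{thm:wellconditioned} to kill any null-homologous boundary collections on $\bdry_-C \times \{1\}$, I may assume $F_P$ is conditioned in $P_0$ (viewed as a product sutured manifold with its induced sutures). I then apply Theorem~\ref{thm:cuttoproduct} to $(P_0, F_P)$ to produce a conditioned surface $F_P'$ in $P_0$ with $\bdry F_P' \cut A(\gamma_{P_0}) = \bdry F_P \cut A(\gamma_{P_0})$, with $[F_P',\bdry F_P']= [F_P,\bdry F_P] + m [\bdry_-C \times \{0\}, \bdry(\bdry_-C) \times \{0\}]$ in $H_2(P_0,\bdry P_0)$ for some $m \geq 0$, and such that $P_0$ decomposed along $F_P'$ is again a product sutured manifold.

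I will then reassemble: let $F = F_P' \cup \calD'$, where $\calD'$ is a collection of co-core copies whose attaching circles on $\bdry_-C \times \{1\}$ coincide with the boundary circles of $F_P'$ lying there. The resulting surface $F \subset C$ is conditioned (since $F_P'$ is and $\calD'$ consists of coherently oriented parallel meridional disks), satisfies $\bdry F \cut A(\gamma) = \bdry F_0 \cut A(\gamma)$ by construction, and $[F,\bdry F] - [F_0,\bdry F_0]$ is a positive multiple of $[\bdry_+ C, \bdry\bdry_+ C]$. The homological identity uses the fact that $[\bdry_-C \times \{0\}]$ promotes to $[\bdry_-C]$ in $H_2(C,\bdry C)$, which agrees with $[\bdry_+ C, \bdry\bdry_+C]$ modulo the boundary relation $[\bdry C] = [R_-(C)] + [A(\gamma)] + [R_+(C)] = 0$, combined with the co-core contributions. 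Decomposing $C$ along $F$ then amounts to decomposing $P_0$ along $F_P'$ (yielding a product) and cutting the $1$--handles along $\calD'$; Lemma~\ref{lem:compressionbodymeridianandverticaldecomp} reassembles these pieces into a compression body $C'$ with $h(C') = h(C)$ when $\bdry_-C \neq \emptyset$ and $h(C') = h(C) - 2$ when $C$ is a handlebody (the drop of $2$ arising from the standard $0$--$1$ handle cancellation made explicit there). In either case $j(C') = j(C)$.

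The main obstacle will be the compatibility of $F_P'$ with $\calD'$ in the reassembly: Theorem~\ref{thm:cuttoproduct} delivers $F_P'$ whose boundary on $\bdry_-C \times \{1\}$ can differ from that of $F_P$ by multiples of the sutures of $P_0$, and one must verify that the resulting boundary circles still bound co-core copies so that $F = F_P' \cup \calD'$ is a properly embedded surface in $C$. The flexibility in the integer $m$ provided by Theorem~\ref{thm:cuttoproduct}, together with the freedom to adjust the multiplicities $n_i$ of the co-core copies in $\calD'$, is what should make the matching work. A secondary subtlety is the handlebody case, where the absence of a genuine product collar forces the pseudo-collar construction and careful bookkeeping for the $-2$ drop in handle number.
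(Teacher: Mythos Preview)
Your overall strategy is the same as the paper's: split $C$ into a product piece and a ``handle'' piece, apply Theorem~\ref{thm:cuttoproduct} to the product piece and Lemma~\ref{lem:compressionbodymeridianandverticaldecomp} to the handle piece, then reassemble. However, you split in the \emph{opposite direction} from the paper: you take the product to be $P_0 = \bdry_-C \times [-1,1]$ with the $1$--handles sitting on top, whereas the paper takes the handle piece to be a neighborhood $N$ of the spine $\bdry_-C \cup \tau$ and the product to be $P = \bdry_+C \times I$ sitting outside it. This reversal creates a genuine gap.

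The gap is in your claim that $F_0$ can be isotoped so that $F_0 \cap h_i$ consists only of parallel copies of the co-core $D_i$. This is false in general. The lateral boundary $\bdry D^2 \times I$ of each $1$--handle is part of $R_+(C)$, and any arc of $\bdry F_0 \cap R_+(C)$ that runs longitudinally over this tube forces $F_0 \cap h_i$ to contain a \emph{vertical strip} $\alpha \times I$ (for $\alpha$ a properly embedded arc in $D^2$). Such strips cannot be isotoped out of $h_i$, since their boundary lives partly on $\bdry C$. Consequently your reassembly $F = F_P' \cup \calD'$ does not produce a properly embedded surface, and the appeal to Lemma~\ref{lem:compressionbodymeridianandverticaldecomp} for the handle pieces does not cover this situation. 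The paper's split avoids the problem entirely: all of $\bdry F_0 \cap R_+(C)$ is absorbed into $P$ and handled by Theorem~\ref{thm:cuttoproduct}, while $F_0 \cap N$ consists only of meridional disks together with vertical disks and annuli over $F_0 \cap \bdry_-C$, exactly the input allowed by Lemma~\ref{lem:compressionbodymeridianandverticaldecomp}.

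Incidentally, the obstacle you flagged is not the real one: Theorem~\ref{thm:cuttoproduct} guarantees $\bdry F_P' \cut A(\gamma_{P_0}) = \bdry F_P \cut A(\gamma_{P_0})$, so the boundary of $F_P'$ on $\bdry_-C \times \{1\}$ agrees \emph{exactly} with that of $F_P$; no adjustment of the $n_i$ is needed. Your handlebody pseudo-collar is also problematic (for a genus one handlebody there is no $1$--handle left after the absorption, and the ``bottom'' disk of the pseudo-collar lies in $R_+(C)$, not $R_-(C)$), but the paper's spine-neighborhood approach treats both cases uniformly.
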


\begin{proof}
A {\em spine} of a compression body $C$ is the union of $\bdry_- C$ and an embedded graph $\tau$ so that $C\cut (\bdry_-C \cup \tau) \cong \bdry_+C \times[0,1]$.
Choose a spine $\bdry_-C \cup \tau$ of $C$ induced by a handle structure realizing $h(C)$, and consider a thin closed neighborhood $N = \bdry_-C \times [-1,-1+\epsilon] \cup N_\tau$ where $N_\tau$ is a closed regular neighborhood of $\tau$. Note that $N$ is a compression body homeomorphic to $C$ and  $C \cut N$ is the product sutured manifold $P= \bdry_+C \times I$.  Let $\Sigma$ be the surface $\bdry_+ N = \bdry_- P$ so that $C = N \cup_\Sigma P$.

Observe that $F_0$ may be isotoped in $C$ to meet this spine so that $F_0 \cap \bdry_-C$ is properly embedded collection of arcs and loops in $\bdry_-C$ and $F_0$ intersects $\tau$ in a discrete set of interior points.  Thus we may assume $F_0$ intersects $N$ in a collection of meridional disks, vertical disks, and vertical annuli.  Adjust $F_0$ by tubing along subarcs of $\tau$ if necessary to ensure that within any parallelism class these meridional disks are coherently oriented, and update $F_0$ to be the resulting surface. This preserves that $F_0$ is conditioned.  Since $F_0$ is conditioned, both $F_0 \cap N$ and $F_0 \cap P$ are conditioned surfaces in $N$ and $P$.
Set $F_N = F_0 \cap N$.  
Next apply Theorem~\ref{thm:cuttoproduct} to the surface $F_0 \cut N = F_0 \cap P$ in the product $P$ to obtain a conditioned surface $F_P$.
Then together these form the conditioned surface $F = F_P \cup F_N$ which is properly embedded in $C$.

We claim that $F$ is now our desired surface.
Since $F \cap N = F_N = F_0 \cap N$ while $F \cap P = F_P$ is obtained from Theorem~\ref{thm:cuttoproduct}, we constructed $F$ so that
 $\bdry F \cut A(\gamma) = \bdry F_0 \cut A(\gamma)$ and and $[F, \bdry F] = [F_0, \bdry F_0] + m[\bdry_+ C, \bdry(\bdry_+ C)]$ in $H_2(C, \bdry C)$ for some integer $m>0$.  Furthermore, it gives the desired decomposition.
 
 Indeed, the result $C'$ of the decomposition of $C$ along $F$ is the union of the compression body $N' = N \cut F_N$ from Lemma~\ref{lem:compressionbodymeridianandverticaldecomp} and the product $P' = P \cut F_P$ from Theorem~\ref{thm:cuttoproduct} along the impressions of surface $\Sigma' = \Sigma \cut F$ in $\bdry N'$ and $\bdry P'$.
More specifically, since $F$ transversally intersects $\Sigma$, the decomposition of $C = N \cup_{\Sigma} P$ as a sutured manifold along $F$ may be viewed as follows.  Figure~\ref{fig:decompandglue} gives a schematic overview. 
\begin{itemize}
    \item First we decompose $C$ along $\Sigma$ into the product $P$ and compression body $N$.   Under this decomposition,  $\Sigma$ leaves  impressions $\Sigma_+ = R_+(N)$ and $\Sigma_- = R_-(P)$ and splits $F$ into $F_N \subset N$ and $F_P \subset P$.  Note that $R_-(N) = R_-(C)$ and $R_+(P) = R_+(C)$.
    \item Next we decompose $N$ along $F_N$ to obtain the compression body $N'$ and $P$ along $F_P$ to obtain the product $P'$.  Identifying $\Sigma'$ as $\Sigma \cut F$,  we observe that $R_+(N') = \Sigma'_+ \cup (F_N)_+$ and $R_-(P') = \Sigma'_- \cup (F_P)_-$.   Furthermore $R_-(N') = (R_-(N) \cut F_N) \cup (F_N)_-$ and $R_+(P') = (R_+(P) \cut F_P) \cup (F_P)_+$.
    \item Finally we glue $P'$ to $N'$, identifying $\Sigma'_+ \subset R_+(N')$ with $\Sigma'_- \subset R_-(P')$.  This forms a sutured manifold $C'$ and a properly embedded surface $\Sigma'$ arising from the identification of $\Sigma'_+$ with $\Sigma'_-$.  
    An interval of $\bdry \Sigma'$ belongs to the sutures of $C'$ only if it is in both or neither of $\bdry \Sigma'_+$ and $\bdry \Sigma'_-$; see Figure~\ref{fig:gluing}.  (By construction, every point of $\bdry \Sigma'$ is in at least one of $\bdry \Sigma'_+$ or $\bdry \Sigma'_-$.)  Thus we have $R_-(C') = R_-(N') \cup (F_P)_-$ and $R_+(C') = R_+(P') \cup (F_N)_+$.  
    \item Therefore, because $F_- = (F_N)_- \cup (F_P)_-$, and $F_+ = (F_N)_+ \cup (F_P)_+$, we have 
    $R_-(N) \cut F_N = R_-(C) \cut F$ and  $R_+(P) \cut F_P = R_+(C) \cut F$.  Then it follows that 
    \[R_-(C')  = R_-(N') \cup (F_P)_- = (R_-(N) \cut F_N) \cup (F_N)_- \cup (F_P)_-  = (R_-(C) \cut F) \cup F_- \]
    and 
    \[R_+(C') = R_+(P') \cup (F_N)_+ = (R_+(P) \cut F_P) \cup (F_N)_+ \cup (F_P)_+ = (R_+(C) \cut F) \cup F_+.\]
    So we see that $C'$ is the sutured manifold obtained from the decomposition of $C$ along $F$.
\end{itemize}

\begin{figure}
    \centering
    \includegraphics[width=.8\textwidth]{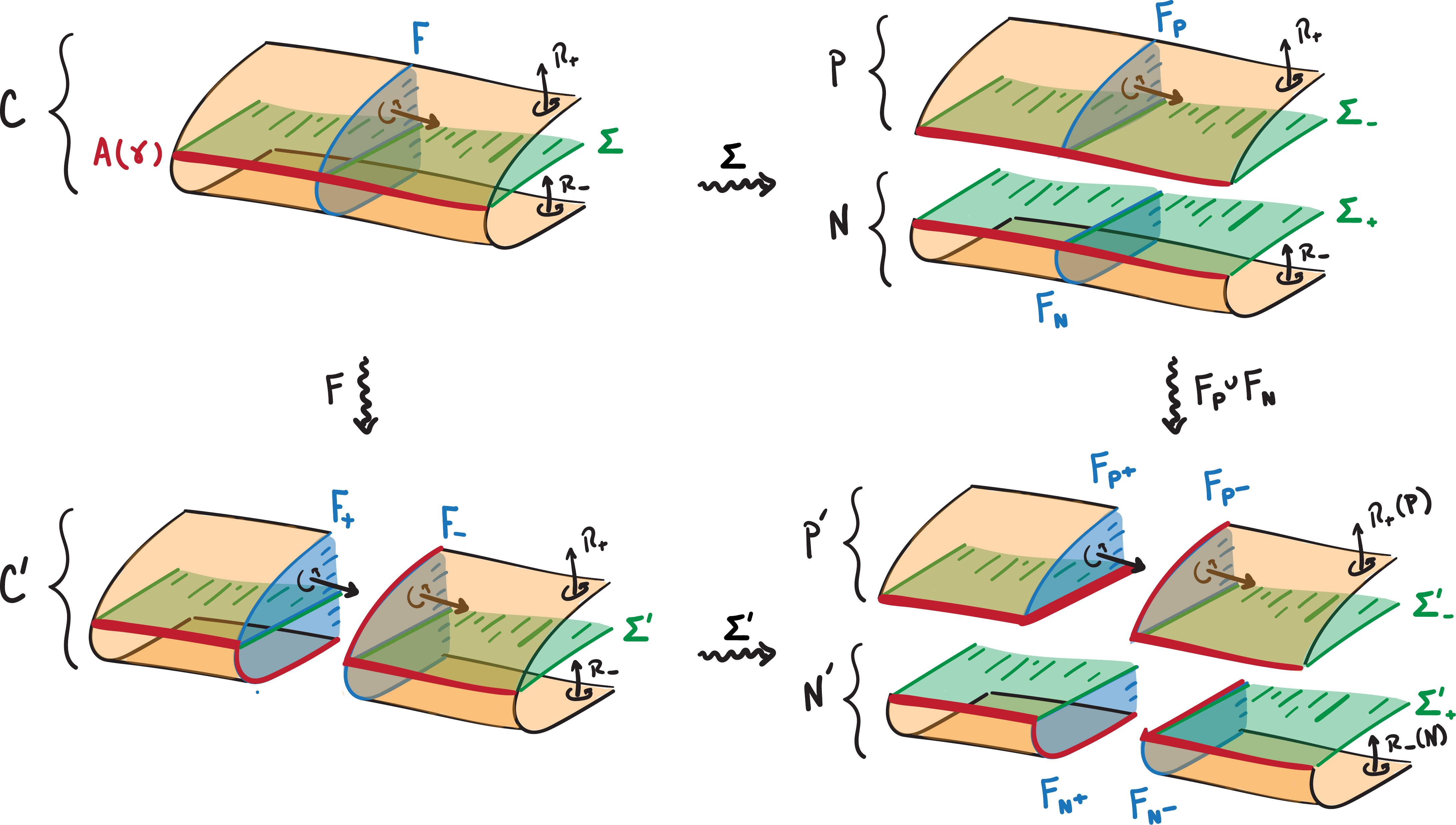}
    \caption{A schematic of how decompositions and gluings of sutured manifolds behave near the surfaces $\Sigma$ and $F$ in $C$.  In particular, the decomposition along $F$ may be recovered by a decomposition along $\Sigma$, followed by a decomposition along $F_P \cup F_N$, and gluing the resulting sutured manifolds together along  $\Sigma'_+$ and $\Sigma'_-$. }
    \label{fig:decompandglue}
\end{figure}

\begin{figure}
    \centering
    \includegraphics[height=4cm]{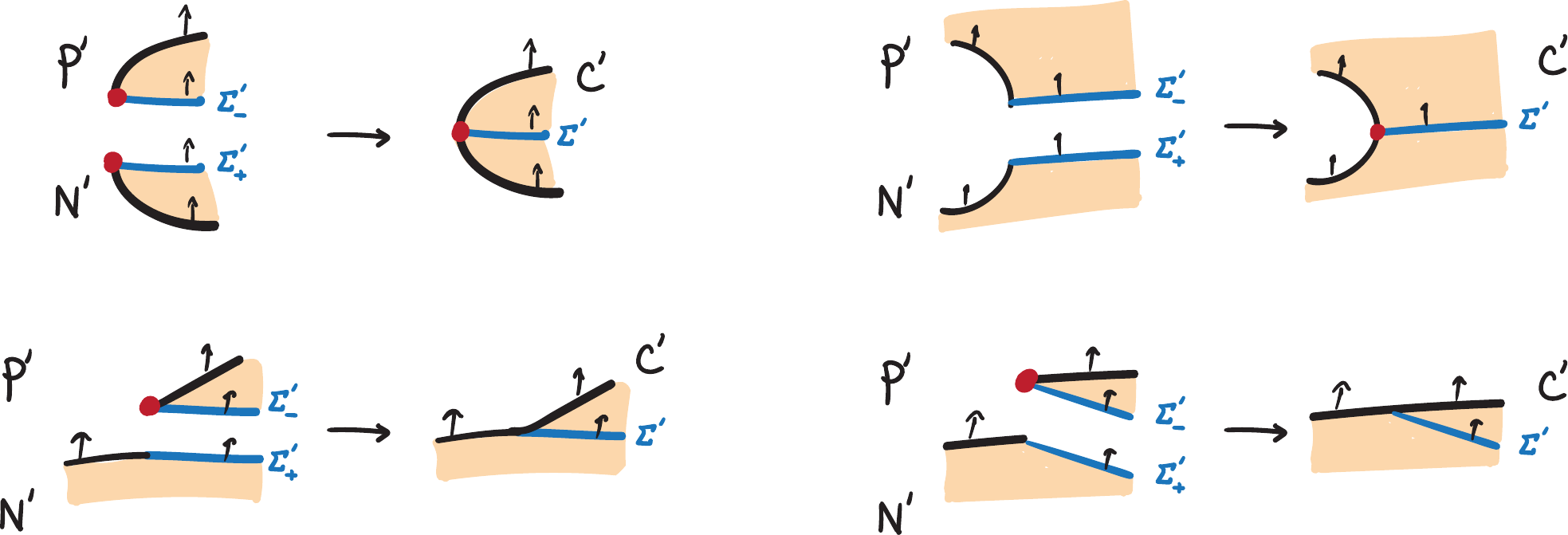}
    \caption{A schematic of how sutures behave under gluing a sutured manifold along subsurfaces $\Sigma'_- \subset R_-$ and $\Sigma'_+ \subset R_+$.}
    \label{fig:gluing}
\end{figure}

  \begin{figure}
     \centering
     \includegraphics[width=.95\textwidth]{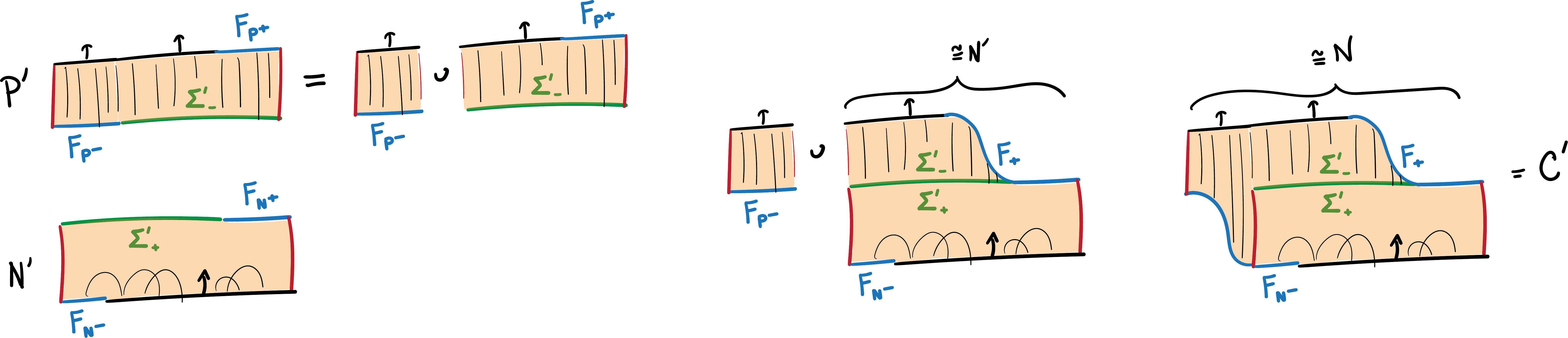}
     \caption{The compression body $C'$ is obtained by attaching the product over $\Sigma'_-$ in $P'$ to $R_+(N')$ (to get a sutured manifold homeomorphic to $N'$) followed by attaching the product over $(F_P)_-$  in $P'$ along their sutures.}
     \label{fig:descomp5}
 \end{figure}

Now we observe that $C'$ is indeed a compression body and we determine its handle number.
Since $P'$ is a product with $R_-(P')  = \Sigma'_- \cup (F_P)_-$, it splits along the vertical disks and annuli over the curves $\bdry \Sigma'_- \cap \bdry (F_P)_-$ as a product over $\Sigma'_-$ and a product over $(F_P)_-$.
The curves $\bdry \Sigma'_- \cap \bdry (F_P)_-$ which are not in the sutures of $P'$ however are identified with $\bdry \Sigma'_+ \cap (F_N)_-$ where they do belong to the sutures of $N'$.   Thus we may regard $C'$ as being obtained by attaching the product over $\Sigma'_-$ in $P'$ to $R_+(N')$ (to get a sutured manifold homeomorphic to $N'$) followed by attaching the product over $(F_P)_-$  in $P'$ along their sutures.  Hence $C'$ is sutured manifold homeomorphic to the sutured manifold $N'$ extended along its sutures by a product. See Figure~\ref{fig:descomp5}. Thus $h(C') = h(N')$. Moreover, since $N'$ is a compression body, so is $C'$.

As $h(N')$ is either $h(N)-2$ or $h(N)$ depending on whether or not $N$ is a handlebody by Lemma~\ref{lem:compressionbodymeridianandverticaldecomp}, we have our result about handle numbers.  The result for handle index immediately follows.
\end{proof}

\subsection{An Upper Bound}

\begin{theorem}\label{thm:lineartocircular}
Let $(M,\gamma)$ be a connected sutured manifold without toroidal sutures.  
For any non-zero class $\xi \in H_2(M,\bdry M)$, there is a surface $F$ representing $\xi$ so that $(M,\gamma)$ has a Heegaard splitting $(M,F,S)$ such that $h(M,F,S) = h(M,\gamma,0)-2d$.  
Consequently,
\[ h(M,\gamma,\xi) \leq h(M,\gamma,0)-2d.\]
Equivalently,
\[j(M,\gamma,\xi) \leq j(M,\gamma,0).\]
\end{theorem}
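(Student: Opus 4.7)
The plan is to build a circular Heegaard splitting of $(M,\gamma)$ with thin surface $F$ representing $\xi$ and handle number $h(M,\gamma,0)-2d$, by starting from a linear Heegaard splitting $(A,B;S)$ realizing $h(M,\gamma,0)$ and applying Theorem~\ref{thm:splitcompressionbody} separately inside each compression body.

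First I would fix a linear Heegaard splitting $(A,B;S)$ of $(M,\gamma)$ with $h(A)+h(B)=h(M,\gamma,0)$, and take a well-conditioned surface $F_0\subset M$ representing $\xi$ (using Theorem~\ref{thm:wellconditioned}) placed in general position with $S$.  After a small isotopy I would arrange that the restrictions $F_{0,A}=F_0\cap A$ and $F_{0,B}=F_0\cap B$ are conditioned decomposing surfaces in the sutured manifolds $A$ and $B$, where $S$ plays the role of $\bdry_+A$ and $\bdry_-B$.  Then I would apply Theorem~\ref{thm:splitcompressionbody} to $A$ with initial surface $F_{0,A}$ to produce a conditioned surface $F_A\subset A$ with $\bdry F_A\cap(R_-(\gamma)\cup S)=\bdry F_{0,A}\cap(R_-(\gamma)\cup S)$, and such that $A'=A\cut F_A$ is a compression body with $h(A')=h(A)-2\varepsilon_A$, where $\varepsilon_A=1$ if $R_-(\gamma)=\emptyset$ (so $A$ is a handlebody) and $\varepsilon_A=0$ otherwise.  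Applying the same theorem to $B$ (with $\bdry_+$ and $\bdry_-$ exchanged, since $B$ is a dual compression body) gives $F_B\subset B$ and $B'=B\cut F_B$ with $h(B')=h(B)-2\varepsilon_B$.

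Because Theorem~\ref{thm:splitcompressionbody} preserves boundary on $R(\gamma_A)=R_-(\gamma)\cup S$, we have $\bdry F_A\cap S=F_0\cap S=\bdry F_B\cap S$, so $F_A$ and $F_B$ glue along these matching curves into a properly embedded decomposing surface $F=F_A\cup F_B\subset M$.  Decomposing $M$ along $F$ yields $M\cut F=A'\cup_{S'}B'$ with $S'=S\cut F$, giving a circular Heegaard splitting $(M,F,S')$ of $(M,\gamma)$ with
\[
h(M,F,S')=h(A')+h(B')=h(M,\gamma,0)-2(\varepsilon_A+\varepsilon_B)=h(M,\gamma,0)-2d,
\]
where $d=\varepsilon_A+\varepsilon_B$ exactly counts whether neither, one, or both of $R_\pm(\gamma)$ are empty.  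To check $[F]=\xi$, observe that $[S,\bdry S]=0$ in $H_2(M,\bdry M)$, since the original linear splitting has $F=\emptyset$ with $[S,\bdry S]=[F,\bdry F]=0$; then the chain $F-F_0$ is supported in $A\cup B$ with $A$- and $B$-parts of relative classes $m_A[S,\bdry S]$ and $m_B[S,\bdry S]$, both pushing to zero in $H_2(M,\bdry M)$, so $[F]=\xi$.  Hence $h(M,\gamma,\xi)\le h(M,F,S')=h(M,\gamma,0)-2d$, and the handle-index reformulation $j(M,\gamma,\xi)\le j(M,\gamma,0)$ follows from the identities recorded in Section~\ref{sec:handlecounts}.

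The main obstacle I expect is ensuring that the restrictions $F_{0,A}$ and $F_{0,B}$ can be made conditioned in their respective compression bodies by an isotopy of $F_0$, and verifying that the resulting $(M,F,S')$ is a genuine circular Heegaard splitting in the sense of Section~\ref{sec:chs} — namely that $F$ and $S'$ are disjoint properly embedded surfaces in $M$ and that $S'$ restricts to a Heegaard surface on each component of $M\cut F$ despite the fact that $F$ and the original $S$ intersect along $F_0\cap S$.  This last point is the crux: one must arrange the intersection curves so that after decomposition along $F$ the new boundary components of $S'$ lie on the (new) annular sutures of $M\cut F$ rather than in the interior of $R(\gamma')$, which may require either positioning $F_0\cap S$ along $\bdry S\subset A(\gamma)$ or performing additional tubing operations from Section~3.3 to convert the splitting into the required form.
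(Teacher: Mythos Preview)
Your overall strategy matches the paper's: start from a minimal linear splitting $(A,B;\Sigma)$, apply Theorem~\ref{thm:splitcompressionbody} in each compression body to a conditioned surface to get $F_A$ and $F_B$ with matching boundary on $\Sigma$, and glue to get $F$ representing $\xi$. The homology computation and the count $h(A')+h(B')=h(M,\gamma,0)-2d$ are also what the paper does.

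The gap is exactly the one you flag as ``the crux,'' and it is not a minor technicality. Your proposed thick surface $S'=\Sigma\cut F$ does \emph{not} give a Heegaard surface for $M\cut F$: after the sutured decomposition along $F$, the compression body $A'=A\cut F_A$ has $R_+(A')=(\Sigma\cut F)\cup (F_A)_+$, not just $\Sigma\cut F$, and likewise $R_-(B')=(\Sigma\cut F)\cup(F_B)_-$. So $\bdry_+A'\neq\bdry_-B'$ along $\Sigma\cut F$, and the new boundary curves of $\Sigma\cut F$ sit in the interior of $R_\pm(\gamma')$ rather than on the annular sutures. Neither pushing $F_0\cap\Sigma$ to $\bdry\Sigma$ nor tubing will fix this in general, since $F_0\cap\Sigma$ is typically homologically nontrivial in $\Sigma$.

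The paper's resolution is not to cut $\Sigma$ but to replace it by the double curve sum $S=F\dcsum\Sigma$, pushed off $F$; concretely $S=F_A\times\{-\epsilon\}\cup(\Sigma\cut N_F)\cup F_B\times\{+\epsilon\}$ in a collar $N_F=F\times[-\epsilon,\epsilon]$. This $S$ is disjoint from $F$ in $M$, and it splits $M\cut F$ into $\overline{A'}=A'\cup F\times[0,\epsilon]$ and $\overline{B'}=B'\cup F\times[-\epsilon,0]$. Each of these differs from $A'$ (resp.\ $B'$) only by attaching a product along vertical disks and annuli, so they are genuine compression bodies with $h(\overline{A'})=h(A')$ and $h(\overline{B'})=h(B')$. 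That is the missing step you need to complete the argument.
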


Recall that  $d=0$, $1$, or $2$ depending on whether neither, just one, or both of $R_+$ and $R_-$ are empty and that  $h(M,\gamma,0)$ is the linear handle number of $(M,\gamma)$.

\begin{proof}
Since $(M,\gamma)$ has no toroidal sutures, every $\xi \in H_2(M,\bdry M)$ is regular. 

Let $(M,\Sigma) = (A,B;\Sigma)$ be a Heegaard splitting of the sutured manifold $(M,\gamma)$. So the splitting has handle number $h(M,\Sigma)=h(A)+h(B)$.

By Theorem~\ref{thm:wellconditioned}, there is a conditioned surface $F_0$ representing $\xi$ that realizes $h(M,\gamma, \xi)$. By a small isotopy, one may assume it is transverse to $\Sigma$.

In particular, $\bdry F_0$ intersects $A(\gamma)$ in essential simple closed curves and coherently oriented spanning arcs.
Since $\Sigma$ is a Heegaard surface, $\bdry \Sigma$ is the union of the core curves of $A(\gamma)$.  So for any given component of $\bdry \Sigma$, its intersections with $\bdry F_0$ are all of the same sign.  Hence if a non-empty collection of components of $F_0 \cap \Sigma$ is trivial in $H_1(\Sigma, \bdry \Sigma)$, the conditioning of $F_0$ implies that the collection must be disjoint from the boundary.  
Thus we may further assume $F_0$ has been chosen so that no non-empty submanifold of $F_0 \cap \Sigma$ is trivial in $H_1(\Sigma, \bdry \Sigma)$. For example, one may modify an initial conditioned surface by taking its double curve sum with a collection of parallel copies of $\Sigma$.

For each $C=A,B$,   the restriction of $F_0$ to $C$ is a conditioned surface $F_0 \cap C$.
Then Theorem~\ref{thm:splitcompressionbody}  implies there is a conditioned surface $F_C$ in $C$ homologous to $F_0 \cap C$ with $\bdry (F_0 \cap C) \cap \Sigma = \bdry F_C \cap \Sigma$ so that the sutured manifold decomposition of $C$ along $F_C$ is a compression body $C'$ with $h(C') = h(C)$ or $h(C')=h(C)-2$ if $C$ is a handlebody.  Recall that a connected compression body is a handlebody exactly when $\bdry_-(C)=\emptyset$. 

So then let $F = F_A \cup F_B$.  Observe that by construction $F$ is homologous to $F_0$ so that it represents $\xi$, and moreover $F \cap \Sigma = F_0 \cap \Sigma$.  Let $S$ be the surface $F\dcsum \Sigma$ pushed off $F$.  Explicitly, we may take a closed collar neighborhood $N_F = F \times [-\epsilon, \epsilon]$ 
of $F$ so that it intersects $\Sigma$ in a collar of $F \cap \Sigma$.  Then $S = F_A \times \{-\epsilon\} \cup (\Sigma \cut N_F) \cup F_B \times \{+\epsilon\}$ is a properly embedded oriented surface disjoint from $F$.   We then claim that $(M, F, S)$ is a circular Heegaard splitting with handle number $h(A')+h(B')$.  Here $A'$ and $B'$ are the compression bodies obtained by decomposing $A$ and $B$ along $F_A$ and $F_B$.

Since $\bdry \Sigma$ is the collection of core curves of the annular sutures of $M$, we may regard the compression bodies $A'$ and $B'$ as being the components on either side of $\Sigma$ in $(M \cut \Sigma) \cut N_F$ or equivalently in $(M \cut N_F) \cut \Sigma$.  
Then as $F$ splits $N_F$ into $F \times [-\epsilon, 0]$ and $F \times [0, \epsilon]$,  we observe that  $S$ splits $M\cut F$ into the two manifolds $\overline{A'} = A' \cup F \times [0,\epsilon]$ and $\overline{B'} = B' \cup F \times [-\epsilon, 0]$.  See Figure~\ref{fig:schematic}.

Let us examine $\overline{A'}$.  Partitioning $F \times [0,\epsilon]$ along the vertical disks and annuli of its intersection with $\Sigma$, we have  $\overline{A'} = (A' \cup F_A \times [0,\epsilon]) \cup (F_B \times [0,\epsilon])$.  Since  $A' \cup F_A \times [0,\epsilon]$ is equivalent to $A'$, we see that $\overline{A'}$ decomposes along these vertical disks and annuli into $A'$ and the product $F_B \times [0,\epsilon]$.  Hence $\overline{A'}$ is a compression body with $h(\overline{A'}) = h(A')$.     A similar argument shows that $\overline{B'}$ is a compression body with $h(\overline{B'}) = h(B')$. 
Thus $(M,F,S)$ is a circular Heegaard splitting that divides $M$ into the compression bodies $\overline{A'}$ and $\overline{B'}$.  Hence $h(M,F,S) = h(\overline{A'})+h(\overline{B'})=h(A')+h(B')$.

By Theorem~\ref{thm:splitcompressionbody}, $h(A') = h(A)$ or $h(A)-2$ depending on whether or not $R_-(\gamma)$ is empty.  Similarly $h(B') = h(B)$ or $h(B)-2$ depending on whether or not $R_+(\gamma)$ is empty.  
Therefore $h(M,F,S) = h(A)+h(B)-2d = h(M,\Sigma)-2d$.    So $h(M,\gamma,\xi) \leq h(M,F,S) = h(M,\Sigma)-2d$.

Consequently if $(M,\Sigma)$ realizes the linear handle number $h(M,\gamma,0)$, then we have our claimed result that
\[ h(M,\gamma,\xi) \leq h(M,\gamma,0)-2d.\]
In terms of handle index, this becomes
\[j(M,\gamma,\xi) \leq j(M,\gamma,0).\]
\end{proof}

\begin{remark}

Note that in the proof above, even if $\Sigma$ is connected, both $\overline{A'}$ and $\overline{B'}$ may be disconnected when $F$ is separating in both $A$ and $B$.
 Furthermore $\overline{A'}$ and $\overline{B'}$ may have components that are products, i.e.\ trivial compression bodies.
\end{remark}

\begin{figure}
    \centering
     \includegraphics[width=\textwidth]{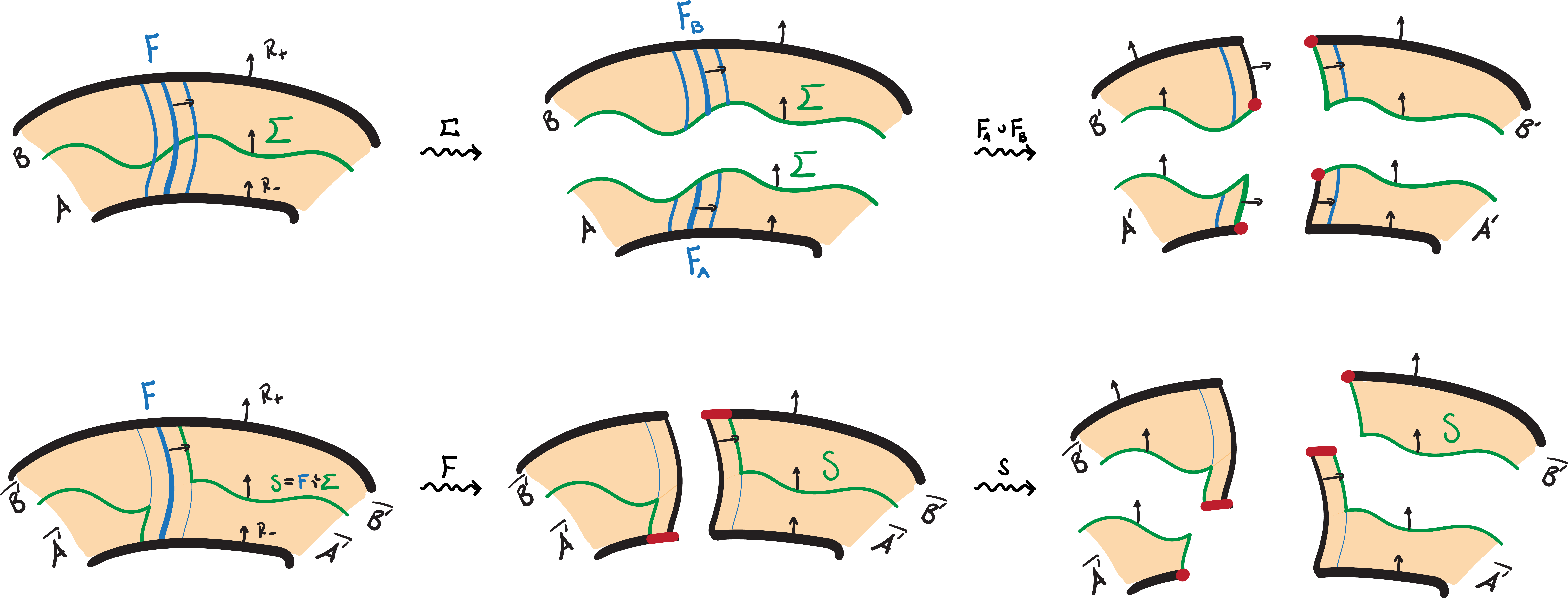}
    \caption{In the top row, we start with a Heegaard surface $\Sigma$ splitting $(M,\gamma)$ into compression bodies $A,B$. The surface $F$  restricted to each compression body $A,B$ decomposes it into a new compression body $A',B'$.  In the bottom row, $\Sigma$ is replaced with $S = F \dcsum \Sigma$.  Then $S$ is a Heegaard surface for the sutured manifold  $(M',\gamma')$ obtained by decomposing $(M,\gamma$) along $F$.  The surface $S$ splits $(M',\gamma')$ into compression bodies $\overline{A'},\overline{B'}$.}
    \label{fig:schematic}
\end{figure}

\section{A lower bound for sutured manifolds without toroidal sutures}

\subsection{Sutured Characteristic}
A {\em sutured surface} is a compact oriented surface $F$ with a disjoint pair of submanifolds $r_+$ and $r_-$ of $\bdry F$ such that  
\begin{itemize}
    \item the orientation of $r_+$ is consistent with the boundary orientation of $\bdry F$ while the orientation of $r_-$ is reversed,
    \item $\bdry F \cut (r_+ \cup r_-)$ is a collection of intervals and circles, collectively called the {\em sutures}, and
    \item each interval suture joins a component of $\bdry r_+$ to a component of $\bdry r_-$.
\end{itemize}
At times it is convenient to use $\bdry_+ F$ and $\bdry_- F$ to refer to the $1$--manifolds $r_+$ and $r_-$, $\bdry_v F$ (for {\em vertical} boundary) to refer to the interval sutures, and $\bdry_0 F$ for the circle sutures.

We now define a modified Euler characteristic for sutured surfaces.
When the sutured surface $F$ is connected, we define the {\em sutured characteristic} of $F$ to be
\[  \chi_{sut}(F) =  -\chi(F) + d(F) + \frac12 \#\bdry_v F + \#\bdry_0 F. \]
where $d(F) = 0$ if $\bdry_0 F \neq \emptyset$ and otherwise counts how many of $r_+$ and $r_-$ are empty.  That is, if $\bdry_0 F = \emptyset$ then $d(F) = 0$, $1$, or $2$ depending on whether neither, just one, or both  of $r_+$ and $r_-$ are empty.
When $F$ has multiple components, then its sutured characteristic is the sum of $\chi_{sut}$ of its components.

\medskip

A sutured surface $F$ without circle sutures can be viewed as a cobordism from $\bdry_- F$ to $\bdry_+ F$, built from attaching $1$--handles to a collar of $\bdry_-F$ and a collection of $0$--handles followed by attaching $2$--handles as needed.  Let $h^i(F)$ is the minimum number of $2$--dimensional $i$--handles used in such a handle structure.
\begin{lemma}\label{lem:sutchar}
For a sutured surface without circle sutures and an associated minimal handle structure
\[  \chi_{sut}(F)  =  -\chi(F) + h^0(F) + h^2(F) + \frac12 \#\bdry_v F = h^1(F).\]
\end{lemma}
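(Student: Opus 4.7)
The plan is to verify the two claimed equalities in turn. For the first, note that since $\partial_0 F = \emptyset$, the definition of $\chi_{sut}$ simplifies directly to $\chi_{sut}(F) = -\chi(F) + d(F) + \frac12 \#\partial_v F$, so it suffices to show $d(F) = h^0(F) + h^2(F)$ for a minimal handle structure. Both sides are additive over connected components (after summing the component-wise $d$), so I reduce to the connected case. There I run through the four possibilities for $r_\pm$: if $r_- \neq \emptyset$ the starting object $r_- \times I$ is nonempty, so no $0$--handle is needed and minimality forces $h^0(F) = 0$; if $r_- = \emptyset$ then at least one $0$--handle is required to begin the component, and minimality again forces exactly one. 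The dual argument handles $h^2(F)$. In each of the four cases $h^0(F) + h^2(F)$ matches the value of $d(F)$.

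For the second equality I view $F$ as being built by attaching $h^1(F)$ one-handles and $h^2(F)$ two-handles to the disjoint union of $h^0(F)$ zero-handles together with a collar $r_- \times I$. The Euler characteristic then decomposes as
\[\chi(F) = \chi(r_- \times I) + h^0(F) - h^1(F) + h^2(F) = \chi(r_-) + h^0(F) - h^1(F) + h^2(F).\]
Since $r_-$ is a closed $1$--manifold in which circle components contribute zero to Euler characteristic, $\chi(r_-)$ equals the number of arc components of $r_-$. Each such arc has two endpoints, and each endpoint of $r_-$ coincides with the $r_-$--end of a unique interval suture; thus the number of arc components of $r_-$ is $\frac12 \#\partial r_- = \frac12 \#\partial_v F$. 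Substituting and solving for $h^1(F)$ yields
\[h^1(F) = -\chi(F) + h^0(F) + h^2(F) + \tfrac12 \#\partial_v F,\]
which combined with the first step proves the lemma.

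The only subtle point is the assertion that in a \emph{minimal} handle structure, one cannot avoid the $0$-- and $2$--handles exactly as dictated by whether $r_\pm$ is empty. This requires observing that no handle cancellation can reduce $h^0$ or $h^2$ below the stated values: a $0$--handle cannot be cancelled if the component it initiates would otherwise have empty starting boundary, and dually for $2$--handles. Once this book-keeping is pinned down, the rest of the proof is a direct Euler characteristic calculation.
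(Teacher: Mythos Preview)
Your proof is correct. The first equality is handled exactly as in the paper: reduce to connected $F$, then determine $h^0$ and $h^2$ from whether $r_-$ and $r_+$ are empty. For the second equality, the paper takes a slightly different route: it glues the interval sutures in pairs to form an auxiliary surface $\widehat{F}$ with $\chi(\widehat{F}) = \chi(F) - \tfrac12\#\partial_v F$, and then observes that $\chi(\widehat{F}) = h^0(F) - h^1(F) + h^2(F)$ since the collar (now over circles only) contributes nothing to $\chi$. Your approach keeps the collar of $r_-$ in place and accounts for its Euler characteristic directly as $\chi(r_-) = \tfrac12\#\partial_v F$. These are the same computation viewed from opposite ends; your version is arguably more direct since it avoids the auxiliary gluing construction.
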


\begin{proof} 
It suffices to prove this lemma for connected $F$.

Since $\bdry_0 F = \emptyset$ by assumption, $d(F)$ counts how many of $r_+$ and $r_-$ are empty.  If $r_- = \emptyset$, then there must be $0$--handles used in its handle structure.  Since $F$ is connected, a minimal handle structure has just one $0$--handle.  Similarly if $r_+= \emptyset$, then a minimal handle structure has just one $2$--handle.  If neither of $r_+$ or $r_-$ are empty, then only $1$--handles are needed.
This gives the first equality.

Next, observe that any component of $\bdry F$ contains an even number of interval sutures.  One could glue these in pairs to make an oriented surface $\hatF$ without sutures.  Then we may count that 
\[ \chi(\hatF) = \chi(F) - \frac12 \#\bdry_v F  = h^0(F) - h^1(F) + h^2(F), \]
which implies the second equality.
\end{proof}

\begin{lemma}\label{lem:thurstyleqcob}
Let $F$ be a sutured surface. 
Then
$\chi_-(F) \leq \chi_{sut}(F)$.
\end{lemma}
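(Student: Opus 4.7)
The plan is to reduce to the connected case and then do a short case analysis on the Euler characteristic of the component.

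First, I would observe that both sides of the inequality are additive over connected components: $\chi_{sut}(F)$ is defined that way, and although $\chi_-$ discards the positive-$\chi$ contributions ($\chi_-(F) = \sum_i \max(0, -\chi(F_i))$), this only makes the left-hand side smaller than $\sum_i \chi_{sut}(F_i)$ provided the componentwise inequality $\max(0, -\chi(F_i)) \leq \chi_{sut}(F_i)$ holds. So it suffices to handle the case where $F$ is connected.

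Now split into two cases. If $\chi(F) \leq 0$, then $\chi_-(F) = -\chi(F)$, and since every extra term in the definition of $\chi_{sut}$ is nonnegative ($d(F) \geq 0$, $\tfrac{1}{2}\#\bdry_v F \geq 0$, and $\#\bdry_0 F \geq 0$), the inequality is immediate. If $\chi(F) > 0$, then $F$ is a disk or a sphere and $\chi_-(F) = 0$, so the inequality reduces to checking that $\chi_{sut}(F) \geq 0$.

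For the sphere, $\bdry F = \emptyset$ forces $r_+ = r_- = \emptyset$ and $\bdry_0 F = \bdry_v F = \emptyset$, giving $d(F)=2$ and $\chi_{sut}(F) = -2 + 2 = 0$. For the disk, I would split on whether $\bdry F$ is a single circle suture (so $\#\bdry_0 F = 1$, giving $\chi_{sut} = -1 + 0 + 0 + 1 = 0$), contains at least one interval suture (so $\#\bdry_v F \geq 2$, using the parity observation that interval sutures around a boundary component alternate with arcs of $r_\pm$, giving $\tfrac{1}{2}\#\bdry_v F \geq 1$ and hence $\chi_{sut} \geq 0$), or is entirely one of $r_\pm$ (so $d(F) = 1$, giving $\chi_{sut} = 0$). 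In each subcase $\chi_{sut}(F) \geq 0$, completing the proof.

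There is no real obstacle here; the only subtlety is remembering to use the parity fact that $\#\bdry_v F$ is even on each boundary component it touches, which comes straight from the definition of a sutured surface.
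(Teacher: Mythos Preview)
Your proof is correct and follows essentially the same approach as the paper: reduce to connected components, dispose of the $\chi(F)\leq 0$ case by nonnegativity of the extra terms, and then check the sphere and the various disk suture configurations by hand. The only cosmetic difference is that the paper parametrizes the disk case by writing $D^2_n$ for the disk with $2n$ interval sutures and computing $\chi_{sut}(D^2_n)=n-1$ for $n\geq 1$, whereas you split into the three subcases; the content is the same.
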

\begin{proof}
Recall that (i) $\chi_-$ of a connected surface is just the maximum of $-\chi$ and $0$ and (ii) $\chi_-$ is additive on disjoint unions.  Therefore we may restrict attention to the case that $F$ is connected.   If $\chi(F) \leq 0$ then the result is immediate since $d(F)$, $\# \bdry_v F$, and $\# \bdry_0 F$ are all non-negative.
If $\chi(F) >0$ so that $\chi_-(F) = 0$, then $F$ is either a sphere $S^2$ or disk $D^2$ since $F$ is orientable.  Since $S^2$ is closed, it has $d = 2$.  So $\chi_{sut}(S^2)=0$.   For the disk however, $\chi_{sut}$ depends on the suture structure.   Let $D^2_n$ denote the disk with $2n$ interval sutures, for $n \geq 0$. Then one finds that $\chi_{sut}(D^2_0) = 0$ while $\chi_{sut}(D^2_n) = n-1$ if $n\geq 1$.  Lastly, a disk whose boundary is a circular suture also has $\chi_{sut} = 0$.
\end{proof}

Any decomposing surface 
$F$ in a sutured manifold $(M, \gamma)$ inherits the structure of a sutured surface:  $r_\pm = \bdry F \cap R_\pm$ while the sutures of $F$ are the intersections of $\bdry F$ with the sutures of $(M,\gamma)$.  
Thus we may define 
\[\chi_{(M,\gamma)}(F) = \chi_{sut}(F)\]
where $F$ has the induced sutured structure.
Then, for any class $\xi \in H_2(M,\bdry M)$, we may define its {\em sutured characteristic} to be
\[\chi_{(M,\gamma)}(\xi) = \min_{F \in \xi} \{\chi_{(M,\gamma)}(F)\}\]
where the minimum is taken among decomposing surfaces $F$ that represent $\xi$.

\begin{remark}
Since $\chi_{sut}$ is additive on disjoint unions of surfaces, it follows that for any class $\xi \in H_2(M,\bdry M)$ \[\chi_{(M,\gamma)}(n \xi) = |n| \chi_{(M,\gamma)}(\xi) \mbox{ for all integers } n >0. \] 
\end{remark}

\begin{lemma}\label{lem:thurstyleqcobchar}
For a sutured manifold $(M,\gamma)$, the Thurston norm of a 
class $\xi \in H_2(M, \bdry M)$ is a lower bound for its sutured characteristic.  That is, $x(\xi) \leq \chi_{(M,\gamma)}(\xi)$.
\end{lemma}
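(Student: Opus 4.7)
The plan is to derive this bound directly from the pointwise comparison already established in Lemma~\ref{lem:thurstyleqcob}, and observe that the minimum defining $\chi_{(M,\gamma)}(\xi)$ ranges over a subclass of the surfaces appearing in the definition of the Thurston norm $x(\xi)$.

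First I would note that every decomposing surface for $(M,\gamma)$ representing $\xi$ is, in particular, a properly embedded oriented surface representing $\xi$. Since the Thurston norm is defined as
\[ x(\xi) = \min \{ \chi_-(F) : (F, \bdry F) \subset (M, \bdry M) \text{ properly embedded and represents } \xi \}, \]
restricting the minimum to the subclass of decomposing surfaces yields the inequality
\[ x(\xi) \leq \min \{ \chi_-(F) : F \text{ a decomposing surface representing } \xi \}. \]

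Next I would apply Lemma~\ref{lem:thurstyleqcob} termwise. For each decomposing surface $F$ representing $\xi$, equipped with its induced sutured surface structure, Lemma~\ref{lem:thurstyleqcob} gives $\chi_-(F) \leq \chi_{sut}(F) = \chi_{(M,\gamma)}(F)$. Taking the minimum over all decomposing surfaces representing $\xi$ then yields
\[ \min \{ \chi_-(F) : F \text{ a decomposing surface representing } \xi \} \leq \chi_{(M,\gamma)}(\xi). \]
Chaining the two inequalities completes the proof.

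There is no real obstacle here: both inequalities are immediate once the definitions are unwound, with Lemma~\ref{lem:thurstyleqcob} doing the substantive work at the level of individual surfaces. The only point meriting a brief sentence in the written proof is that every decomposing surface is eligible for the Thurston norm minimum, so restricting to decomposing surfaces can only raise (or preserve) the infimum.
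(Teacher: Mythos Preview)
Your proposal is correct and follows essentially the same approach as the paper: pick a decomposing surface realizing $\chi_{(M,\gamma)}(\xi)$, apply Lemma~\ref{lem:thurstyleqcob} to bound $\chi_-(F)\le\chi_{(M,\gamma)}(F)$, and then use that $x(\xi)\le\chi_-(F)$ since $F$ is among the surfaces competing for the Thurston norm. The only cosmetic difference is that you phrase it as comparing two minima over nested classes, while the paper works directly with a single minimizer.
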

\begin{proof}
Suppose a properly embedded surface $F$ in $(M,\gamma)$ represents $\xi$ so that $\chi_{(M,\gamma)}(\xi) = \chi_{(M,\gamma)}(F)$.  Then by Lemma~\ref{lem:thurstyleqcob}, $\chi_{(M,\gamma)}(F) \geq \chi_-(F)$.  Since $\chi_-(F) \geq x([F]) = x(\xi)$, the result follows.
\end{proof}

\subsection{A Lower Bound}

Recall, we say a surface $F$ representing a regular class $\xi$ {\em realizes} $h(M,\gamma,\xi)$ if there is a surface $S$ in $M$ such that $h(M,\gamma,\xi) = h(M,F,S)$.
Then the 
{\em handle 
sutured characteristic} of a 
regular class $\xi$ is
\[ \chi_{(M,\gamma)}^{h} (\xi ) = \min_{F \in \xi} \{\chi_{(M,\gamma)}(F) \vert F \mbox{ realizes } h(M,\gamma,\xi)\}.\]

By Lemma~\ref{lem:homologicallyessential}, $h(M,\gamma,0)$ is realized by a splitting with $F=\emptyset$.  So for the trivial class $\xi=0$, we have $\chi_{(M,\gamma)}^{h}(0) = 0$ since $\chi_{sut}(\emptyset) = 0$.

\begin{theorem}
\label{thm:generallowerbound}
Let $(M, \gamma)$ a sutured manifold  without toroidal sutures.
Then for any $\xi \in H_2(M, \bdry M)$
the following inequality holds:

\[ h(M,\gamma,0) - 2d  \leq h(M,\gamma,\xi) + 2 \chi_{(M,\gamma)}^{h}(\xi) \]
where $d=0,1,2$ depending on whether neither, just one, or both of $R_+(\gamma)$ and $R_-(\gamma)$ are empty.
\end{theorem}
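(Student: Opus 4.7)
The plan is to begin with a well-conditioned circular Heegaard splitting $(M,F,S) = (A,B;F,S)$ of $(M,\gamma)$ that simultaneously realizes $h(M,\gamma,\xi)$ and $\chi_{(M,\gamma)}(F) = \chi^{h}_{(M,\gamma)}(\xi)$; such a splitting exists by Theorem~\ref{thm:wellconditioned} together with the definition of $\chi^{h}$. From it I will construct a linear Heegaard splitting $(M,\Sigma)$ of $(M,\gamma)$ satisfying
\[ h(M,\Sigma) \le h(M,F,S) + 2\chi_{(M,\gamma)}(F) + 2d, \]
which, since $h(M,\gamma,0) \le h(M,\Sigma)$ and $h(M,F,S) = h(M,\gamma,\xi)$, yields the desired inequality.

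The construction absorbs the thin surface $F$ into the thick surface $S$ by introducing new three-dimensional handles into the compression bodies $A$ and $B$. Since $(M,\gamma)$ has no toroidal sutures, $F$ has no circle sutures, so Lemma~\ref{lem:sutchar} provides a minimal two-dimensional handle decomposition of $F$ as a cobordism from $\bdry_- F$ to $\bdry_+ F$ with exactly $h^{1}(F) = \chi_{(M,\gamma)}(F)$ one-handles (and with $h^{0}(F) + h^{2}(F) = d(F)$ zero- and two-handles, which match the existing $0$- and $3$-handle structures of $A$ and $B$ and can be absorbed at no additional cost). For each $1$-handle $\eta$ of $F$, let $\alpha_\eta$ be its core arc. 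Push $\alpha_\eta$ slightly off $F$ into the $F_-$ side and attach a three-dimensional $1$-handle to $A$ along the resulting arc, giving a compression body $\tilde A$ with $h(\tilde A) = h(A) + 1$; symmetrically, push $\alpha_\eta$ into the $F_+$ side and attach a three-dimensional $2$-handle to $B$, giving a dual compression body $\tilde B$ with $h(\tilde B) = h(B) + 1$. After processing all $h^{1}(F)$ one-handles of $F$, the new thin surface $\tilde F$ obtained by compressing $F$ along the dual cocores of the attached handles becomes a disjoint union of disks and spheres, each null-homologous, so it can be amalgamated away by Lemma~\ref{lem:amalg} without increasing the handle count.

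The result is a linear Heegaard splitting $(M,\Sigma) = (A'',B'';\Sigma)$ whose handle index obeys $j(M,\Sigma) \le j(M,F,S) + 2\chi^{h}(\xi)$, since each attached $1$- or $2$-handle increases the index by one. In the circular splitting neither $A$ nor $B$ is a handlebody (both contain $F_{\pm}$ in their thin/thick boundary), so $j(M,F,S) = h(M,F,S) = h(M,\gamma,\xi)$. Converting the final bound from handle index back to handle number introduces the $+2d$ correction: when $R_{-}(\gamma) = \emptyset$ the compression body $A''$ is forced to be a handlebody, requiring one extra $0$-handle and thus contributing $+2$ to the handle number relative to handle index, and similarly when $R_{+}(\gamma) = \emptyset$, for a total of $+2d$. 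Assembling gives
\[ h(M,\gamma,0) \le h(M,\Sigma) = j(M,\Sigma) + 2d \le h(M,\gamma,\xi) + 2\chi^{h}_{(M,\gamma)}(\xi) + 2d. \]

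The main technical obstacle will be to carry out the handle attachments so that the pushed-off arcs $\alpha_\eta$ are disjoint and compatible with the existing handle structures on $A$ and $B$, and to verify that the resulting $\tilde F$ is genuinely a disjoint union of null-homologous disks and spheres amenable to amalgamation. Extra care is required near interval sutures of $F$ arising from arcs of $\bdry F \cap A(\gamma)$, and for components of $F$ with $d(F_{i}) \neq 0$, whose isolated $0$- or $2$-dimensional handles must be paired with existing $0$- and $3$-handles of $A$ and $B$ without incurring handles beyond the $2d$ allotment already accounted for.
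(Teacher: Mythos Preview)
Your overall strategy --- convert a minimal two-dimensional handle decomposition of $F$ into three-dimensional handles so as to build a linear splitting whose handle excess over $h(M,\gamma,\xi)$ is controlled by $h^1(F)=\chi_{sut}(F)$ --- is the same as the paper's, and the final inequality is the right target. But two steps are wrong as written. First, the absence of toroidal sutures does \emph{not} force $F$ to have no circle sutures: a boundary circle of $F$ lying entirely in $A(\gamma)$ is a circle suture of $F$. The paper treats this in a separate case, perturbing each such circle so that it meets both $R_+$ and $R_-$ in a single arc; this preserves $\chi_{(M,\gamma)}(F)$ and reduces to the case with no circle sutures. Second, your handling of $h^0(F)$ and $h^2(F)$ is incorrect: for $\xi\neq0$ neither $A$ nor $B$ carries any $0$- or $3$-handles to ``match'', and $\sum_i d(F_i)$ over the components of $F$ can far exceed $d(M,\gamma)\le 2$. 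The correct mechanism (in the paper) is that each $0$-handle of $F$ thickens to both a $3$-dimensional $0$-handle on one side of $F$ and a $3$-dimensional $1$-handle on the other; these cancel in pairs, leaving a single surviving $0$-handle only when $R_-=\emptyset$, and dually for the $2$-handles of $F$ and $R_+$.

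More seriously, ``attach a $1$-handle to $A$ along an arc pushed into the $F_-$ side'' places the feet of that handle on $\bdry_- A$, which is not an operation that yields a compression body, and your subsequent plan to amalgamate away ``null-homologous disks and spheres'' does not obviously terminate in a \emph{linear} splitting with the claimed count. The paper avoids all of this: it thickens $F$ to two collars $F\times[0,\epsilon]$ and $F\times[-\epsilon,0]$, reads each $2$-dimensional $k$-handle of $F$ as a $3$-dimensional $k$-handle in one collar and a $3$-dimensional $(k{+}1)$-handle in the other, and then explicitly reassigns these thickened handles between the two sides to obtain compression bodies $A^*$, $B^*$ with $\bdry_-A^*=R_-$ and $\bdry_+B^*=R_+$ directly --- no amalgamation step is required, and the handle count is read off immediately.
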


\begin{proof}
Since $\chi_{(M,\gamma)}^{h}(\xi) \geq 0$ for all $\xi$,
 the conclusion is trivially satisfied if $\xi=0$. So assume $\xi\neq 0$.

Given  $(M, \gamma, \xi)$, let $F$ be a decomposing surface representing  $\xi$ and realizing $h(M,\gamma, \xi)$ with  $\chi_{(M,\gamma)}(F) = \chi_{(M,\gamma)}^{h}(\xi)$. In $M$, let $F \times [-\epsilon, \epsilon]$ be a collar of $F$, so that $M' = M\cut F$ with the sutured manifold decomposition $(M,\gamma) \overset{F}{\leadsto} (M',\gamma')$.
Recall, we identify $F$ with $F\times \{0\}$ and set $F_{\pm}= F\times \{\mp \epsilon \}$.  With this arrangement, the normal of the oriented surface  $F_+$ points out of $M\cut F$ and of $F_-$ points into $M\cut F$. Next choose the surface $S$ so that $(M,F,S)=(A, B; F,S)$ a circular Heegaard splitting  with handle number $h(M,F,S)=h(A)+h(B)$ realizing $h(M,\gamma, F)$, where: 
\begin{itemize}
    \item $\bdry_-A = R'_- = F_- \cup (R_-\cut F)$  
    \item $\bdry_+A=\bdry_-B= S$
    \item $\bdry_+B = R'_+ = F_+ \cup (R_+\cut F)$
\end{itemize}

For each $\pm = +,-$, define the collars $C_\pm = R_\pm \times I$ and $C'_\pm = R'_\pm \times I$ so that $\bdry R_+ \times I \cup \bdry R_- \times I = A(\gamma)$ and $\bdry R'_+ \times I \cup \bdry R'_- \times I = A(\gamma')$.

\medskip

Let us first assume that no component of $\bdry F$ is contained in the sutures.  Thereafter we will address the necessary modifications when some component of $\bdry$ is contained in the sutures.

\medskip

{\bf Case 1:} {\em No component of $\bdry F$ is contained in the sutures.}\\
Let us also fix a minimal handle structure on $F$, viewed as a cobordism from $F \cap R_-$ to $F \cap R_+$ due to its embedding in the sutured manifold $(M,\gamma)$. 
In particular, a component of $F$ has a $0$--handle only if it is disjoint from $R_-$, and it has a $2$--handle only if it is disjoint from $R_+$.
Furthermore $h^1(F) = \chi_{sut}(F) = \chi_{(M,\gamma)}(F)$.

First observe that $A$ has a compression body structure consisting of $1$--handles attached to $C'_-= R'_- \times I$.  (No component of $A$ is a handlebody since the circular splitting realizes $h(M,\gamma,\xi)$ for $\xi \neq 0$; see Section~\ref{sec:handlecounts}.) 
Since $R'_- = (R_- \cut F) \cup F_-$, let us first thicken $F_-$ by attaching $F\times [0,\epsilon]$ to $A$ and then viewing the collar $C'_-$ as being $(R_- \cut F) \times I \cup F \times [0,\epsilon]$.  Now we may drag the feet of the $1$--handles of $A$ so that each is attached to the portion of the collar over $R_-\cut F$ or a thickened disk neighborhood of a $0$--handle of $F_-$ in $F \times [0,\epsilon]$.  Recall that if a component of $F$ has no $0$--handles in its handle structure  it meets $R_-$; hence the foot of a $1$--handle of $A$ over such a component could be dragged to any component of $R_- \cut F$ that it meets.

Next, consider the manifold $\bar{A} = (A \cup F\times [0,\epsilon]) \cut C_+$ obtained by scalloping out whatever intersects the collar $C_+$ of $R_+$.  Such intersections are just thickenings of  $(\bdry_+ F) \times [0,\epsilon]$ and their removal does not affect the aforementioned handle structure of $A$ or the handle structure of $F$.
Now let us consider the handle structure of $\bar{A}$ induced from the $1$--handles of $A$ and the handles of $F \times [0,\epsilon] \cut C_+$ built upon only $(R_- \cut F) \times I$ and the thickened $0$--handles of $F$, $\calH^0(F) \times [0,\epsilon]$.  Indeed, any $2$--dimensional $k$--handle of $F$ thickens to a $3$--dimensional $k$--handle of $F\times [0,\epsilon]$ and also of $F \times [0,\epsilon] \cut C_+$.  If a component of $F$ has no $0$--handles, then its thickened $1$--handles will be attached to the components of $(R_- \cut F) \times I$ that it meets; otherwise its thickened $1$--handles will be attached to its thickened $0$--handles.  Observe that the $2$--handles of $\bar{A}$ are the thickened $2$--handles of $F$, specifically $\calH^2(F) \times [0,\epsilon]$.

At this point, it is helpful to acknowledge that the analogous dual construction may be carried out for the compression body $B$.  That is, we obtain a handle structure on the manifold $\bar{B} = (B \cup F \times [-\epsilon,0]) \cut C_-$ built upon $(R_+ \cut F) \times I$ and the thickened $2$--handles of $F$, $\calH^2(F) \times [-\epsilon, 0]$.  Cutting out $C_-$ just removes thickenings of $(\bdry_- F) \times [-\epsilon,0]$.  And any $2$--dimensional $k$--handle of $F$ thickens to a $3$--dimensional $k+1$--handle of $F \times [-\epsilon,0]$ which is a dual $2-k$--handle.   
Indeed, the $1$--handles of $\bar{B}$ (the dual $2$--handles) are the thickened $0$--handles of $F$, specifically $\calH^0(F) \times [-\epsilon,0]$.

So now we may consider that $\bar{A} \cup C_-$ only adds to $\bar{A}$ the thickenings of  $(\bdry_- F) \times [-\epsilon,0]$ to complete $(R_- \cut F) \times I$ to $C_- = R_- \times I$. Hence $\bar{A} \cup C_-$ has the handle structure of $\bar{A}$ built upon $C_-$ and the $0$--handles $\calH^0(F) \times [0,\epsilon]$. By deleting the $2$--handles $\calH^2(F) \times [0,\epsilon]$, this becomes a compression body, though one that is possibly disconnected. Similarly $\bar{B} \cup C_+$ is made into a possibly disconnected compression body built upon $C_+$ and the dual $0$--handles $\calH^2(F) \times [-\epsilon,0]$ by deleting its dual $2$--handles, $\calH^0(F) \times [-\epsilon,0]$.  Though as these dual $2$--handles are just $1$--handles, we may add them to the other compression body.
Ultimately, we form the compression bodies
\[A^* = \bar{A} \cup C_- \cup (\calH^0(F) \times [-\epsilon,0]) \cut (\calH^2(F) \times [0,\epsilon])\]
and
\[B^* = \bar{B} \cup C_+ \cup (\calH^2(F) \times [0,\epsilon]) \cut (\calH^0(F) \times [-\epsilon,0]).\]
Since $A^*$ and $B^*$ are compression bodies with disjoint interiors where $M=A^*\cup B^*$, $\bdry_- A^* = R_-$, and $\bdry_+ B^* = R_+$, they define a Heegaard splitting (and Heegaard surface $S^* = \bdry_+ A^* = \bdry_- B^*$) for the connected sutured manifold $M$ and are hence connected as well.  See Figure~\ref{fig:circular-to-lineal} for a schematic of this construction.

\begin{figure}
    \centering
    \includegraphics[width=\textwidth]{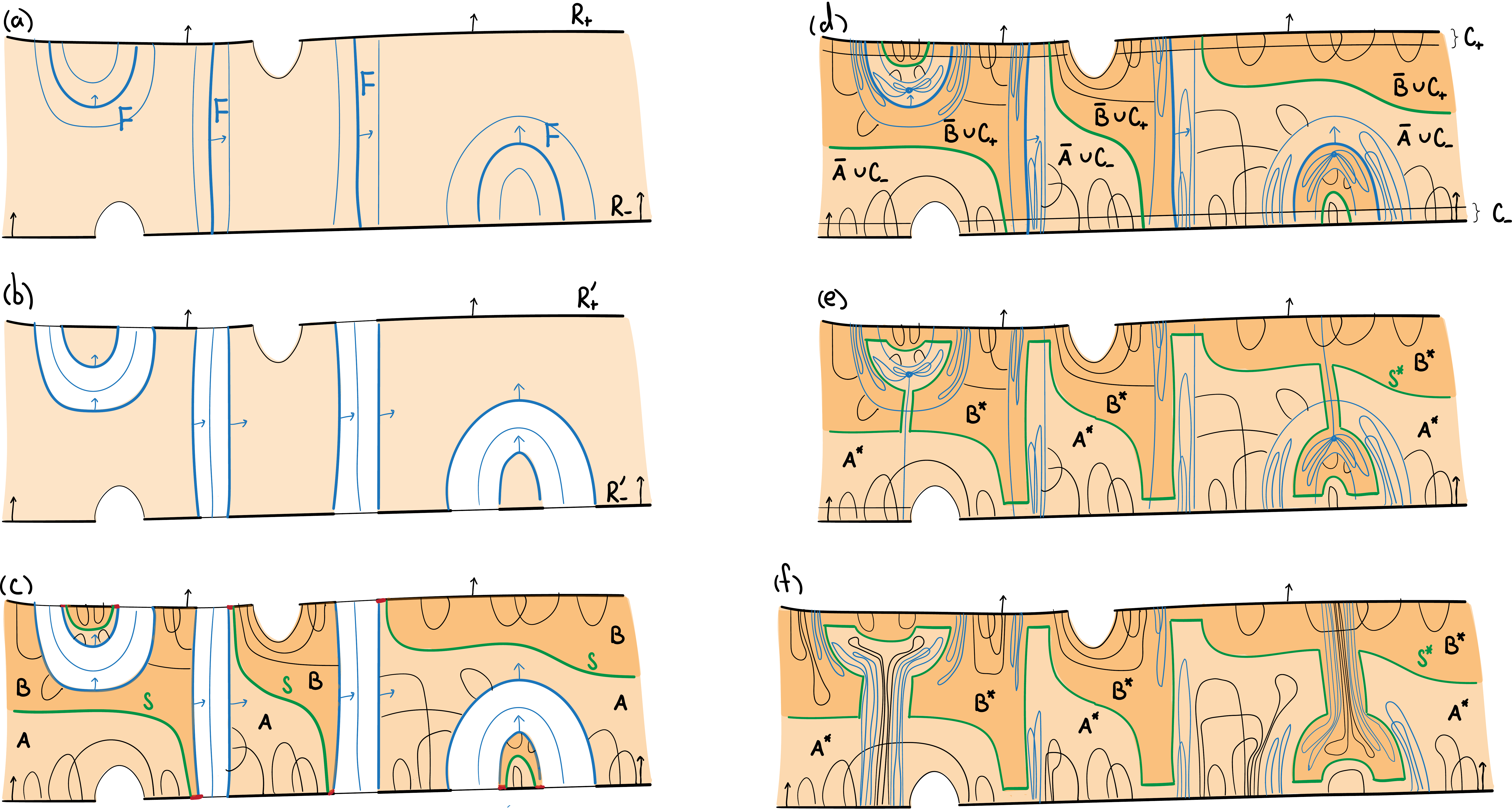} 
    \caption{Schematic pictures for the proof of Theorem \ref{thm:generallowerbound}.  
    (a) The sutured manifold $M$ with the surface $F$ and a collar is indicated.  
    (b) The sutured manifold $M' = M\cut F$ is obtained by decomposing $M$ along $F$. 
    (c) The Heegaard surface $S$ for $M'$, the compression bodies $A$ and $B$, and the cores of their $1$--handles and dual $1$--handles are shown.
    (d)  Halves of the collar of $F$ are added back to $A$ and $B$ to reform $M$, the collars $C_+$ of $R_+$ and $C_-$ of $R_-$ are removed from $A$ and $B$ respectively and then added to the other to form the manifolds $\overline{A} \cup C_-$ and $\overline{B} \cup C_+$.  In the halves of the collar of $F$, thickened handle decompositions of $F$ are now indicated.
    (e) Reassigning $2$--handles of $\overline{A} \cup C_-$ and $1$--handles of $\overline{B} \cup C_+$ associated to handle structures of $F$ induce compression bodies $A^*$ and $B^*$.
    (f) The feet of the $1$--handles of $A^*$ and dual $1$--handles of $B^*$ are pulled to lie in $R_-$ and $R_+$ respectively.
    }
    \label{fig:circular-to-lineal}
\end{figure}

As built, $A^*$ has $h^0(F)$ $0$--handles and $h^1(A) + h^1(F) + h^0(F)$ $1$--handles.  These $0$--handles may each be cancelled with a $1$--handle, unless $R_- = \emptyset$ in which case $A^*$ must be a handlebody and one $0$--handle must remain.  This induces $A^*$ with a handle structure consisting of $h^0(A^*)$ $0$--handles and $h^1(A)+h^1(F)+h^0(A^*)$ $1$--handles.  Similarly all of the dual $0$--handles of $B^*$ may be cancelled with dual $1$--handles, unless $R_+ = \emptyset$ in which case $B^*$ is a handlebody and one dual $0$--handle must remain.
Therefore we obtain a Heegaard splitting $(M,A^*,B^*)$ with handle number
\begin{align*}
    h(M,A^*,B^*) &= h(A^*) + h(B^*)\\
        &= (h^0(A^*) + h^1(A^*)) + (h^0(B^*) + h^1(B^*) )\\
        &= (h^0(A^*) + h^1(A)+h^1(F)+h^0(A^*)) + ( h^0(B^*) + h^1(B) + h^1(F) + h^0(B^*) )\\
        &= h^1(A)+h^1(B) + 2h^1(F) + 2(h^0(A^*)+h^0(B^*)) \\
        &= h(M,\gamma,\xi) + 2\chi_{sut}(F) + 2d.
\end{align*}
Therefore 
\[ h(M,\gamma,0) \leq h(M,\gamma,\xi) + 2\chi_{(M,\gamma)}(F) + 2d.\]
Thus, since $\chi_{(M,\gamma)}(F) = \chi_{(M,\gamma)}^{h}(\xi)$,  we have the desired inequality.

\bigskip

{\bf Case 2:} {\em Some component of $\bdry F$ is contained in the sutures.}\\
We will perturb the surface $F$ to a surface $\tilde{F}$ such that
\begin{itemize}
    \item $\tilde{F}$ is a decomposing surface representing $\xi$ and realizing $h(M, \gamma, \xi)$, 
    \item $\chi_{(M,\gamma)}(\tilde{F}) = \chi_{(M,\gamma)}(F)$, and
    \item no component of $\bdry \tilde{F}$ is contained in the sutures.
\end{itemize}
Then we may apply the argument of Case 1 to $\tilde{F}$ to get the desired result.

For each component of $A(\gamma)$ that contains a component of $\bdry F$, isotop $F \cup S$ in a collar of the annular suture (taken in $(M,\gamma)$) so that afterwards each component of $\bdry F$ and $\bdry S$ in this collar now meets the annular suture in a pair of spanning arcs and each $R_+$ and $R_-$ in a single arc.  See the top row of  Figure~\ref{fig:perturbationcollardecomp}.   Let $\tilde{F}$ and $\tilde{S}$ be the surfaces that result from these isotopies. Since $\tilde{F}$ is isotopic to $F$, it too represents $\xi$.

Observe that since each circular suture of $F$ has been perturbed into a pair of vertical sutures, the perturbed surface $\tilde{F}$ now has no circular sutures while $\chi_{(M,\gamma)}(\tilde{F}) = \chi_{(M,\gamma)}(F)$.  

\begin{figure}
    \centering
    \includegraphics[width=\textwidth]{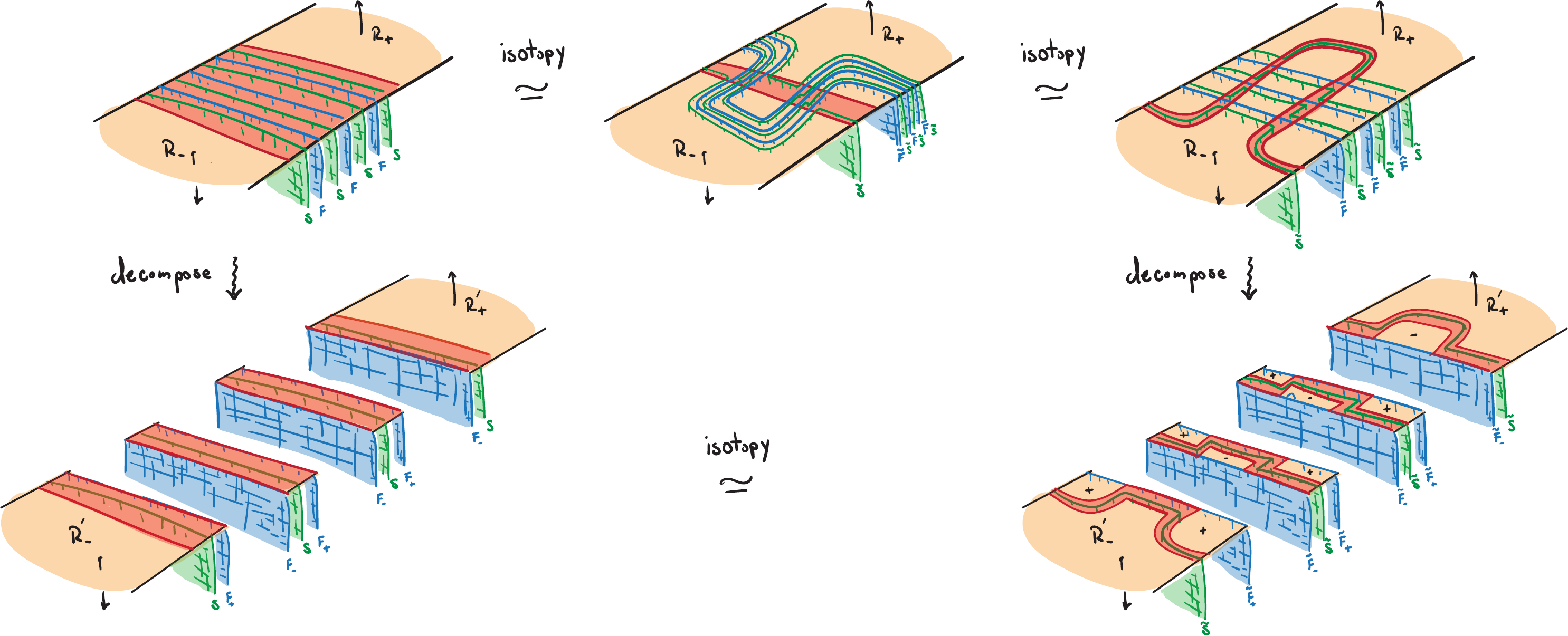}
    \caption{The top row starts with the isotopy of the surfaces $F$ and $S$ in a neighborhood of an annular suture that contains components of $\bdry F$ (and hence also $\bdry S$) to the perturbed surfaces $\tilde{F}$ and $\tilde{S}$.  Thereafter an ambient isotopy straightens out $\tilde{F}$ and $\tilde{S}$ at the expense of perturbing the annular suture.
    The bottom row shows that the decomposition along $F$ and how it contains $S$ is isotopic to the decomposition along $\tilde{F}$ and how it contains $\tilde{S}$.}
    \label{fig:perturbationcollardecomp}
\end{figure}

Moreover, as the perturbation of $(M, F, S)$ to $(M, \tilde{F}, \tilde{S})$ occurs within a collar of $A(\gamma)$, these splittings are basically equivalent: Shaving off a collar of $A(\gamma)$ from each $(M,F,S)$ and $(M,\tilde{F}, \tilde{S})$ leaves equivalent manifolds.  Ultimately, we just need to compare the decompositions along each $F$ and $\tilde{F}$ in this collar.   Figure~\ref{fig:perturbationcollardecomp} shows that these decompositions leave isotopic sutured manifolds $(M \cut F, \gamma')$ and $(M \cut \tilde{F}, \tilde{\gamma})$  with surfaces $S$ and $\tilde{S}$ that are equated by such an isotopy.
\end{proof}

\begin{theorem}\label{thm:bothbounds}
Let $(M, \gamma)$ a sutured manifold  without toroidal sutures.
Then for any $\xi \in H_2(M,\bdry M)$ we have
\[ h(M,\gamma, \xi) \leq h(M,\gamma,0)-2d  \leq h(M,\gamma, \xi) + 2\chi^{h}_{(M,\gamma)}(\xi)\]
where $d=0,1,2$ depending on whether neither, just one, or both of $R_+(\gamma)$ and $R_-(\gamma)$ are empty.
\end{theorem}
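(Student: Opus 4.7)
The plan is to observe that this statement is the concatenation of two independent inequalities, each of which has already been established by one of the preceding theorems. The first inequality $h(M,\gamma,\xi) \leq h(M,\gamma,0) - 2d$ is exactly the content of Theorem~\ref{thm:lineartocircular}, and the second inequality $h(M,\gamma,0) - 2d \leq h(M,\gamma,\xi) + 2\chi^h_{(M,\gamma)}(\xi)$ is exactly Theorem~\ref{thm:generallowerbound}. So my proof consists of a single line invoking these two results and noting that both use the same constant $d = d(M,\gamma)$ defined in Section~\ref{sec:handlecounts}.

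For the first inequality, I would recall that Theorem~\ref{thm:lineartocircular} starts with any linear Heegaard splitting $(A,B;\Sigma)$ realizing $h(M,\gamma,0)$ and a well-conditioned surface $F_0$ representing $\xi$ (obtained from Theorem~\ref{thm:wellconditioned}). Applying Theorem~\ref{thm:splitcompressionbody} to each of $A$ and $B$ separately produces a conditioned surface $F = F_A \cup F_B$ homologous to $F_0$ whose double-curve sum with $\Sigma$ (pushed off $F$) gives the thick surface $S$ of a circular splitting $(M,F,S)$. The handle count drops by $2$ in each compression body that is a handlebody, which occurs exactly for those compression bodies for which one of $R_\pm$ is empty; this accounts for the $-2d$ term.

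For the second inequality, Theorem~\ref{thm:generallowerbound} takes a Heegaard splitting $(A,B;F,S)$ realizing $h(M,\gamma,\xi)$ where $F$ achieves $\chi^h_{(M,\gamma)}(\xi)$ and constructs a linear Heegaard splitting by gluing the two halves of a collar $F \times [-\epsilon,\epsilon]$ back onto $A$ and $B$ and then reassigning handles using a minimal $2$-dimensional handle structure on $F$. The $2$-handles of $F$ (thickened into $F \times [0,\epsilon]$) get shifted from the $A$ side to the $B$ side as dual $1$-handles, and vice versa for the $0$-handles of $F$, producing compression bodies $A^*$ and $B^*$ with $\bdry_- A^* = R_-$ and $\bdry_+ B^* = R_+$. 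The handle count of the resulting linear splitting is $h(M,\gamma,\xi) + 2h^1(F) + 2d$; by Lemma~\ref{lem:sutchar}, $h^1(F) = \chi_{sut}(F) = \chi^h_{(M,\gamma)}(\xi)$, yielding the stated bound.

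The only subtlety is to confirm that the constants $d$ agree in both directions; this is straightforward since $d = d(M,\gamma)$ is intrinsic to the sutured manifold and only depends on whether $R_+(\gamma)$ and $R_-(\gamma)$ are empty, not on the particular splitting. With both inequalities in hand the theorem follows immediately. The substantive work is entirely contained in the proofs of Theorems~\ref{thm:lineartocircular} and~\ref{thm:generallowerbound}; I expect no further obstacle in assembling them into the stated two-sided bound.
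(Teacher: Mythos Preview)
Your proposal is correct and matches the paper's own proof essentially verbatim: the paper simply states that Theorem~\ref{thm:lineartocircular} gives the first inequality and Theorem~\ref{thm:generallowerbound} gives the second. Your additional recap of how those two theorems work is accurate but goes beyond what the paper records at this point, since all of that content already lives in the proofs of Theorems~\ref{thm:lineartocircular} and~\ref{thm:generallowerbound}.
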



\section{Bounds for knots and links}

\subsection{Tunnel numbers and Morse Novikov numbers of links}
\label{sec:tunnelnumber}

Let $M$ be a connected, compact oriented $3$--manifold with non-empty boundary.  A {\em tunnel system} for $M$ is a properly embedded collection of arcs $\tau$ so that $M-\nbhd(\tau)$ is a handlebody.  The {\em tunnel number} $\TN(M)$ of $M$ is the minimum cardinality of its tunnel systems. Equivalently, if we regard $M$ as a connected sutured manifold $(M,\gamma_+)$ for which $\bdry M = R_+$, then the tunnel number of $M$ is the minimum of $h(B)$ among Heegaard splittings $(A,B;S)$ for $M$. 
This gives the equality
\[\TN(M) + g(M,\gamma_+) + 1= h(M,\gamma_+, 0).\]

\begin{observation}\label{obs:TNlh}
For manifolds $M$ with non-empty toroidal boundary,
 $h(M,\gamma_+, 0) = 2\TN(M) +  2$.
\end{observation}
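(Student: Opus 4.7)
The plan is to compute the handle number of an arbitrary linear Heegaard splitting of $(M,\gamma_+)$ explicitly in terms of the Heegaard genus, then minimize to obtain both $h(M,\gamma_+,0)$ and $\TN(M)$ in terms of $g(M,\gamma_+)$.

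Take any linear Heegaard splitting $(A,B;S)$ with connected Heegaard surface $S$ of genus $g := g(S)$. Since $R_-(\gamma_+) = \emptyset$ and $S$ is connected, $A$ is a handlebody of genus $g$, so $h(A) = g+1$. The dualized compression body $B$ satisfies $\bdry_- B = S$ and $\bdry_+ B = \bdry M \neq \emptyset$, so by Lemma~\ref{lem:compressionbodymeridianandverticaldecomp} (applied via the handle-count conventions of Section~\ref{sec:handlecounts}) its minimal handle structure has $h^3(B) = 0$, giving $h(B) = h^2(B)$.

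The key computation is to evaluate $h^2(B)$ using Euler characteristic. Since $B$ is a compact orientable $3$--manifold, $\chi(B) = \tfrac{1}{2}(\chi(S) + \chi(\bdry M))$. As every component of $\bdry M$ is a torus, $\chi(\bdry M) = 0$, so $\chi(B) = 1 - g$. Since $B$ is built from a collar $S \times I$ by attaching $h^2(B)$ two-handles, we also have $\chi(B) = \chi(S) - h^2(B) = 2 - 2g - h^2(B)$. Comparing these two expressions gives $h^2(B) = g - 1$, whence $h(M,S) = (g+1) + (g-1) = 2g$.

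Minimizing over linear Heegaard splittings then gives $h(M,\gamma_+,0) = 2\,g(M,\gamma_+)$. Because $h(B) = g(S) - 1$ is monotone in $g(S)$, the same splittings that realize the minimum genus realize the minimum of $h(B)$, so $\TN(M) = g(M,\gamma_+) - 1$. Combining yields $h(M,\gamma_+,0) = 2\,g(M,\gamma_+) = 2(\TN(M)+1) = 2\TN(M) + 2$. The main technical point is the Euler characteristic computation for $B$ and ensuring the toroidal hypothesis is used precisely to kill $\chi(\bdry M)$; once that is in hand everything else is formal substitution and minimization.
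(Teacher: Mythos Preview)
Your approach is correct and matches the paper's (brief) treatment: the paper records the general identity $\TN(M)+g(M,\gamma_+)+1=h(M,\gamma_+,0)$ just before the Observation, and the Observation amounts to the computation $h(B)=g(S)-1$ in the toroidal-boundary case, which you supply in detail via Euler characteristic. The conclusion $h^2(B)=g-1$ and the final identity are right.

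There is, however, a sign slip in your Euler characteristic step. Attaching a $2$--handle \emph{increases} $\chi$ by $1$ (it is homotopically a $2$--cell glued along a circle), so the correct relation is
\[
\chi(B)=\chi(S\times I)+h^2(B)=\chi(S)+h^2(B)=(2-2g)+h^2(B),
\]
not $\chi(B)=\chi(S)-h^2(B)$. Combined with $\chi(B)=\tfrac12\chi(\bdry B)=\tfrac12\big(\chi(S)+\chi(\bdry M)\big)=1-g$, this gives $h^2(B)=(1-g)-(2-2g)=g-1$, which is what you asserted. As written, your stated formula would yield $h^2(B)=1-g$; you evidently made a compensating arithmetic slip to land on the right answer. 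Fix the sign and the argument is clean. (Also, the fact $h^3(B)=0$ when $\bdry_+B\neq\emptyset$ is recorded directly in Section~\ref{sec:handlecounts}; Lemma~\ref{lem:compressionbodymeridianandverticaldecomp} is not needed for this.)
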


\medskip

Let $L$ be a link in a closed $3$--manifold $Y$.
The {\em exterior} of $L$ is the complement $M=Y\cut L$ of a regular neighborhood.  
Let $(M, \gamma_+)$ be obtained by setting $R_+(\gamma_+) = \bdry M$.
The {\em tunnel number} $\TN(L)$ of $L$ is $\TN(M,\gamma_+)$.

Further suppose $L$ is an oriented, null-homologous link in $Y$.  
Let $(M,\gamma_0)$ be obtained by setting $T(\gamma_0)=\bdry M$.
Then its Seifert surfaces define a regular class $\xi \in H_2(M,\bdry M)$. 
The {\em Morse-Novikov number} $\MN(L)$ of $L$ is 
 $\MN(L) = h(M,\gamma_0, \xi)$.  (Note that this definition can be extended to rationally null-homologous links as well.)
By  Theorem~\ref{thm:reassign}, $h(M,\gamma_0, \xi) = h(M,\gamma_+,\xi)$.  Hence we also have $\MN(L) = h(M,\gamma_+, \xi)$.

\subsection{Bounds for knots and links in $S^3$.}
Let $L$ be an oriented link in $S^3$ of $|L|$ components. A Seifert surface for $L$ is an oriented surface $F$ possibly disconnected but without closed components such that $\bdry F = L$.  Let $M=S^3 \cut L$ be the exterior of $L$, and consider the sutured structure  $(M, \gamma_+)$ obtained by setting $R_+(\gamma_+) = \bdry M$.

\begin{cor}[{\cite[Theorem 1.1]{P}}]\label{cor:mntn}
The Morse-Novikov number of an oriented link $L$ in $S^3$ is at most twice its tunnel number.
That is
\[ \MN(L) \leq 2\TN(L).\]
\end{cor}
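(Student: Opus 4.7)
The plan is to deduce this as a direct application of the first inequality in Theorem~\ref{thm:bothbounds}, applied to the sutured manifold $(M,\gamma_+)$ with the class $\xi \in H_2(M,\bdry M)$ of a Seifert surface for $L$. The first thing I would check is that Theorem~\ref{thm:bothbounds} is applicable: since $R_+(\gamma_+) = \bdry M$, the entire toroidal boundary is absorbed into $R_+(\gamma_+)$, so $T(\gamma_+) = \emptyset$, and consequently every class in $H_2(M,\bdry M)$, including $\xi$, is regular.

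Next I would match the terms of Theorem~\ref{thm:bothbounds} with the link invariants appearing in the statement. By Observation~\ref{obs:TNlh}, the fact that $\bdry M$ is a non-empty union of tori gives
\[ h(M,\gamma_+,0) = 2\TN(L)+2. \]
For the Morse-Novikov number, the definition in Section~\ref{sec:tunnelnumber} gives $\MN(L) = h(M,\gamma_0,\xi)$ where $(M,\gamma_0)$ is the sutured structure with $T(\gamma_0) = \bdry M$. Since $(M,\gamma_0)$ and $(M,\gamma_+)$ agree on $\bdry M \cut T(\gamma_0)$ (which is empty) and $T(\gamma_0) \subset R_+(\gamma_+)$, Theorem~\ref{thm:reassign} identifies $h(M,\gamma_0,\xi) = h(M,\gamma_+,\xi)$. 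Hence $\MN(L) = h(M,\gamma_+,\xi)$.

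Finally, since $R_+(\gamma_+) \neq \emptyset$ while $R_-(\gamma_+) = \emptyset$, exactly one of the two is empty, so the constant $d$ in Theorem~\ref{thm:bothbounds} is $d = 1$. Substituting into the first inequality of that theorem yields
\[ \MN(L) = h(M,\gamma_+,\xi) \;\leq\; h(M,\gamma_+,0) - 2d \;=\; (2\TN(L)+2) - 2 \;=\; 2\TN(L), \]
which is the desired bound. There is no real obstacle at this stage: all of the geometric work is already bundled into Theorem~\ref{thm:bothbounds} and the restructuring result Theorem~\ref{thm:reassign}. The only point requiring a moment of care is the switch between the sutured structures $\gamma_0$ and $\gamma_+$ on the same underlying $M$, and this is exactly what Theorem~\ref{thm:reassign} is designed to handle.
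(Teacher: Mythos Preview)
Your proposal is correct and follows essentially the same route as the paper. The paper cites Theorem~\ref{thm:lineartocircular} directly rather than the first inequality of Theorem~\ref{thm:bothbounds} (which is the same statement), and it relies on the identification $\MN(L)=h(M,\gamma_+,\xi)$ already recorded in Section~\ref{sec:tunnelnumber} via Theorem~\ref{thm:reassign} rather than re-deriving it inside the proof; otherwise the arguments coincide.
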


\begin{proof}
By Observation~\ref{obs:TNlh}, $h(M,\gamma_+,0) = 2\TN(L) + 2$. 
Now for the homology class $\xi$ of a Seifert surface for $L$ restricted to $(M,\gamma_+)$, Theorem~\ref{thm:lineartocircular} implies that $h(M,\gamma_+,\xi)\leq h(M,\gamma_+,0)-2$. 
Since $\MN(L)= h(M,\gamma_+,\xi)$, this gives our result.
\end{proof}

\begin{lemma} \label{lem:spanningsurf}
Suppose a surface $F$ realizes $h(M,\gamma_+,\xi)$ and $\chi_{(M,\gamma_+)}^h(\xi)$.
Then $F$ is a (possibly disconnected) Seifert surface for $L$ and
\[\chi_{(M,\gamma_+)}(\xi) =2g(F) +|L|-|F|.\]
\end{lemma}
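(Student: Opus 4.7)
The plan is to show that a surface $F$ realizing both $h(M,\gamma_+,\xi)$ and $\chi^h_{(M,\gamma_+)}(\xi)$ is, up to a harmless replacement preserving these minima, a Seifert surface for $L$, and then to compute $\chi_{sut}(F)$ directly.

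First I would argue that $F$ has no closed components. Since $M = S^3 \cut \nbhd(L)$ is the exterior of a link in $S^3$, a Mayer--Vietoris calculation shows that the inclusion induced map $H_2(\bdry M) \to H_2(M)$ is surjective, and hence by exactness of the long exact sequence of the pair the map $H_2(M) \to H_2(M,\bdry M)$ is zero. In particular, every closed orientable surface in $M$ is null-homologous in $H_2(M,\bdry M)$, so any closed component $F_0 \subset F$ is null-homologous as a subcollection. The amalgamation argument in the proof of Lemma~\ref{lem:homologicallyessential} then produces a splitting with thin surface $F - F_0$ of handle number at most $h(M,F,S)$, and a direct computation gives $\chi_{sut}(F - F_0) = \chi_{sut}(F) - 2g(F_0) \leq \chi_{sut}(F)$; so $F - F_0$ also realizes both minima. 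Minimality of $\chi_{sut}$ forces $g(F_0) = 0$, and any $2$--sphere components remaining can be amalgamated away without changing $\chi_{sut}$. Thus we may take $F$ to have no closed components.

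Next I would arrange $F$ to be well-conditioned via Theorem~\ref{thm:wellconditioned}, checking that its proof's operations preserve not only $h$ but also $\chi_{sut}$ in this setting: boundary tubings along annular sutures are vacuous since $A(\gamma_+) = \emptyset$, and one verifies that the peeling operation of Lemma~\ref{lem:dcsumheegsplitting} does not raise $\chi_{sut}$. On each torus component $T_i \subset \bdry M$, the curves of $\bdry F \cap T_i$ are then pairwise disjoint essential simple closed curves, hence all mutually parallel; they consist of $p_i$ copies of some primitive class $c_i \in H_1(T_i)$ and $q_i$ copies of $-c_i$, with $(p_i - q_i) c_i = [\lambda_i]$, the Seifert longitude class of $L_i$. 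Condition (C2) forbids any null-homologous subcollection in $H_1(\bdry M)$, ruling out both null-homotopic components and any antiparallel pair $\{+c_i, -c_i\}$; hence $q_i = 0$, each curve is essential, and primitivity of $[\lambda_i]$ together with $p_i c_i = [\lambda_i]$ forces $p_i = 1$ and $c_i = [\lambda_i]$. Thus $\bdry F$ is isotopic to $L$ and $F$ is a Seifert surface.

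Finally, for a Seifert surface $F$ in $(M,\gamma_+)$ with components $F_1, \ldots, F_{|F|}$ of genera $g_i$ and boundary counts $b_i$ (all boundary curves lying in $R_+(\gamma_+) = \bdry M$), each $F_i$ has $r_-(F_i) = \emptyset$, $r_+(F_i) = \bdry F_i \neq \emptyset$, and $\bdry_v F_i = \bdry_0 F_i = \emptyset$, giving $d(F_i) = 1$ and $\chi_{sut}(F_i) = -\chi(F_i) + 1 = 2g_i + b_i - 1$. Summing and using $\sum_i b_i = |L|$ produces the required identity $\chi_{(M,\gamma_+)}(\xi) = \chi_{sut}(F) = 2g(F) + |L| - |F|$. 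The main obstacle is the bookkeeping in the previous paragraph: verifying that the peeling procedure used in the proof of Theorem~\ref{thm:wellconditioned}, which can alter $g(F)$, the boundary count, and $|F|$, leaves $\chi_{sut}$ nonincreasing so that the well-conditioned replacement of $F$ still realizes $\chi^h_{(M,\gamma_+)}(\xi)$. Once that is confirmed, the (C2) analysis on each torus is the geometric crux and directly yields $\bdry F = L$.
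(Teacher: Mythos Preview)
Your closed-components step and the final $\chi_{sut}$ computation match the paper's proof exactly. The divergence is in how you force $\bdry F$ to meet each boundary torus in a single longitude. You route this through Theorem~\ref{thm:wellconditioned} and condition (C2), whereas the paper bypasses well-conditioning entirely: it simply observes that an adjacent antiparallel pair of curves of $\bdry F$ on a torus of $R_+(\gamma_+)$ can be eliminated by a tubing whose effect on the decomposed manifold is a product annulus decomposition, so Lemma~\ref{lem:productdecompgivesnondecreasinghandlenumber} preserves the handle number; once no antiparallel pairs remain, the Seifert class forces exactly one longitude per torus.

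The obstacle you flag is real and is exactly why the paper does not go through Theorem~\ref{thm:wellconditioned} here. The peeling operation of Lemma~\ref{lem:dcsumheegsplitting} is a double curve sum with a pushed-off torus followed by discarding $\bdry$--parallel pieces; while $\chi$ and $\bdry F$ are unchanged by the double curve sum, the component count can move in either direction, and since in $(M,\gamma_+)$ every component with boundary contributes $d=1$ and every closed component contributes $d=2$, the quantity $\chi_{sut}$ is \emph{not} obviously nonincreasing under peeling. So your proposed verification is not just bookkeeping --- it is a genuine gap, and without it you cannot guarantee the well-conditioned replacement still realizes $\chi^h_{(M,\gamma_+)}(\xi)$. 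The paper's direct product-annulus move is both shorter and sidesteps this issue; on the other hand, your route via (C2) would, if the gap were closed, also transparently rule out null-homotopic boundary curves, a case the paper handles only implicitly.
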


\begin{proof}
Since $M$ is the exterior of a link in $S^3$, any closed component of $F$ homologically trivial in $H_2(M,\bdry M)$.  Then by Lemma~\ref{lem:homologicallyessential}, $h(M,\gamma_+,\xi)$ is realized by a subsurface of $F$ without closed components.  Since $F$ already realizes $\chi_{(M,\gamma_+)}^h(\xi)$, it must have no closed components. By applications of Lemma~\ref{lem:productdecompgivesnondecreasinghandlenumber} along annuli, we may ensure $\bdry F$ has no pairs of parallel but oppositely oriented components in any component of $\bdry M$ while maintaining $\chi_{(M,\gamma_+)}^h(\xi)$.  Then since $\xi$ is a Seifert class, $\bdry F$ must have a single component in each component of $\bdry M$.  Hence $F$ is a Seifert surface for $L$.

As a properly embedded surface in $(M,\gamma_+)$, $F$ inherits the structure of a sutured surface with  $\bdry F =\bdry_+ F = r_+$ so that $h^0(F)=|F|$, $h^2(F)=0$, and $\#\bdry_v(F) = 0$.  Hence by Lemma~\ref{lem:sutchar}, \[\chi_{(M,\gamma_+)}(F) = h^1(F) = |F|-\chi(F).\]
If $F$ has components $F_1, \dots, F_{|F|}$, 
then 
\[\chi(F) = \sum \chi(F_i) = \sum \left(2- 2g(F_i) -|\bdry F_i|\right) = 2|F|-|\bdry F| - 2\sum g(F_i).\]
So we have 
\[
    \chi_{(M,\gamma_+)}(F)=|F|-\chi(F) = |F| - \left(2|F|-|\bdry F| - 2\sum g(F_i)\right) = 2g(F) + |L|-|F|
\]
since $|\bdry F| = |L|$.
\end{proof}

Define $g_\MN(L) = g(F)$ for a surface $F$ that realizes $h(M,\gamma_+,\xi)$ and $\chi_{(M,\gamma_+)}^h(\xi)$.  If $F$ has no closed components, then $F$ is a Seifert surface for $L$.

\begin{cor}\label{cor:tnmn}
Let $L$ be an oriented link in $S^3$.  
Then
\[2\TN(L) \leq \MN(L) + 4g_{\MN}(L).  \]
\end{cor}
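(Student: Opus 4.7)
The plan is to apply Theorem~\ref{thm:bothbounds} to the sutured manifold $(M,\gamma_+)$ and the Seifert class $\xi$, then dictionary-translate the resulting inequality into link-theoretic terms using Section~\ref{sec:tunnelnumber} and Lemma~\ref{lem:spanningsurf}.

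First I would unpack the constants that appear in Theorem~\ref{thm:bothbounds}.  Since $R_+(\gamma_+)=\bdry M\neq\emptyset$ while $R_-(\gamma_+)=\emptyset$, the sutured manifold $(M,\gamma_+)$ has no toroidal sutures, so the theorem applies, and the parameter $d$ equals $1$ because exactly one of $R_\pm(\gamma_+)$ is empty.  By Observation~\ref{obs:TNlh}, $h(M,\gamma_+,0)=2\TN(L)+2$, and by the discussion of Section~\ref{sec:tunnelnumber} (using Theorem~\ref{thm:reassign} to pass from the standard ``all toroidal sutures'' structure $\gamma_0$ to the ``all $R_+$'' structure $\gamma_+$) we have $h(M,\gamma_+,\xi)=\MN(L)$.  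Feeding these values into the upper bound
\[ h(M,\gamma_+,0) - 2 \leq h(M,\gamma_+,\xi) + 2\chi^{h}_{(M,\gamma_+)}(\xi) \]
of Theorem~\ref{thm:bothbounds} immediately rearranges to the intermediate estimate
\[ 2\TN(L) \leq \MN(L) + 2\chi^{h}_{(M,\gamma_+)}(\xi). \]

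Next I would identify $\chi^{h}_{(M,\gamma_+)}(\xi)$ with $2g_{\MN}(L)$.  Lemma~\ref{lem:spanningsurf} supplies a (possibly disconnected) Seifert surface $F$ for $L$ that simultaneously realizes $h(M,\gamma_+,\xi)=\MN(L)$ and $\chi^{h}_{(M,\gamma_+)}(\xi)$, and also yields
\[ \chi^{h}_{(M,\gamma_+)}(\xi) = 2g(F) + |L| - |F|. \]
By the definition of $g_{\MN}(L)$ immediately preceding Corollary~\ref{cor:tnmn}, $g_{\MN}(L)=g(F)$ for any such $F$.  The normalization in the proof of Lemma~\ref{lem:spanningsurf}, which uses product decompositions along annuli to remove oppositely oriented parallel boundary components while maintaining $\chi^h$, further forces every component of $F$ to bound exactly one component of $L$, so that $|F|=|L|$ and hence $\chi^{h}_{(M,\gamma_+)}(\xi)=2g_{\MN}(L)$.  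Substituting this back into the intermediate inequality produces the desired bound $2\TN(L)\leq\MN(L)+4g_{\MN}(L)$.

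The delicate point in this plan is the last identification $|F|=|L|$: one must verify that the cleanup procedures of Lemma~\ref{lem:spanningsurf} can be arranged not merely to give a single coherently oriented longitude of $\bdry F$ on each component of $\bdry M$, but actually to separate $F$ into one component per component of $L$.  Once that is secured, the corollary is a routine substitution of link-theoretic quantities into the general inequality provided by Theorem~\ref{thm:bothbounds}.
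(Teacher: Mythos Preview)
Your approach matches the paper's essentially line for line: the paper invokes Theorem~\ref{thm:generallowerbound} (your citation of Theorem~\ref{thm:bothbounds} is equivalent), substitutes the same values $d=1$, $h(M,\gamma_+,0)=2\TN(L)+2$, $h(M,\gamma_+,\xi)=\MN(L)$, and then asserts $\chi^{h}_{(M,\gamma_+)}(\xi)=2g_{\MN}(L)$ by citing Lemma~\ref{lem:spanningsurf} together with the definition of $g_{\MN}(L)$, with no further argument.

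The step you single out as delicate --- collapsing $2g(F)+|L|-|F|$ to $2g_{\MN}(L)$ via $|F|=|L|$ --- is precisely where both arguments are thin, and your proposed justification for it does not work as written. The annulus cleanup in the proof of Lemma~\ref{lem:spanningsurf} arranges that $\bdry F$ meets each boundary torus in a single coherently oriented longitude, so $|\bdry F|=|L|$; it does \emph{not} force each component of $F$ to have a single boundary circle. A single component of $F$ may still span several components of $L$ (the fiber annulus of the Hopf link being the obvious example), so in general one only obtains $|F|\leq |L|$, hence $\chi^{h}_{(M,\gamma_+)}(\xi)=2g_{\MN}(L)+(|L|-|F|)\geq 2g_{\MN}(L)$. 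That inequality goes the wrong way for the substitution you want, so the passage from the intermediate estimate to $2\TN(L)\leq \MN(L)+4g_{\MN}(L)$ is not secured by the mechanism you propose. The paper's proof does not supply a separate argument here either; it simply records the equality.
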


\begin{proof}
Let $\xi$ be the class of a Seifert surface for $L$ realizing $h(M,\gamma_+,\xi)$ and $\chi_{(M,\gamma_+)}^h(\xi)$. 
By Theorem~\ref{thm:generallowerbound},
\[ h(M,\gamma_+,0) - 2  \leq h(M,\gamma_+,\xi) + 2 \chi_{(M,\gamma_+)}^{h}(\xi) \]
because $d=1$ as $R_+ = \bdry M$.
By definition, $h(M,\gamma_+,\xi) = \MN(L)$.

By Observation~\ref{obs:TNlh} $h(M,\gamma_+,0) = 2\TN(L) + 2$. 
Lemma~\ref{lem:spanningsurf} and the definition of $g_\MN(L)$ imply that  $\chi_{(M,\gamma_+)}^{h}(\xi) = 2g_\MN(L)$.  Hence $2\TN(L) \leq \MN(L) + 4g_{\MN}(L)$.
\end{proof}

\begin{lemma}
If $L$ is a link in $S^3$, then $\MN(L)$ is realized by a connected Seifert surface.
\end{lemma}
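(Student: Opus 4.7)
The plan is to take a (possibly disconnected) Seifert surface $F$ realizing $\MN(L) = h(M,\gamma_+,\xi)$, which exists by Lemma~\ref{lem:spanningsurf}, and reduce the number of its components by repeated tubings that preserve the handle number. Iterating produces a connected Seifert surface realizing $\MN(L)$.

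If $L$ is empty or the unknot, a disk suffices. Otherwise $\xi = [F] \neq 0$, so in any splitting $(A,B;F,S)$ of $(M,\gamma_+)$ with $h(M,F,S) = \MN(L)$ we have $d = 0$ by the discussion in Section~\ref{sec:handlecounts}; consequently no component of $A$ or $B$ is a handlebody, and each such component has nonempty thin boundary lying among the components of $F_\pm$. Now suppose $F$ has $k \geq 2$ components and fix such a splitting.

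The principal case is when some component $A_r$ of $A$ (or, dually, some component $B_s$ of $B$) contains two distinct components of $F_-$. A minimal handle structure on $A_r$ has no $0$--handles, since $A_r$ is not a handlebody. The spine $\bdry_- A_r \cup \tau_r$ is connected, so at least one edge of $\tau_r$, that is, some $1$--handle of $A_r$, has its two feet on distinct components of $\bdry_- A_r$. Applying the tubing operation of Lemma~\ref{lem:handleext} along such a $1$--handle yields a new splitting $(M,F',S')$ with $|F'| = |F|-1$, $[F'] = [F]$, and $h(M,F',S') \leq h(M,F,S) = \MN(L)$. Minimality of $\MN(L)$ forces equality, so $F'$ again realizes $\MN(L)$, and we induct on $|F|$.

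The main obstacle is the remaining case in which each component of $A$ carries exactly one component of $F_-$ and each component of $B$ carries at most one component of $F_+$, so that no single compression body of the splitting directly connects two components of $F$. Here I would exploit the connectedness of $M$: the dual graph whose vertices are the components of $A$ and $B$ and whose edges are the components of $F$ and $S$ must be connected, while the $F$--edges form a matching of $A$-- to $B$--components. Connectivity forces some $S$--component to bridge two compression bodies whose $F$--partners are distinct. Choosing an arc in $M$ realizing this connection, one can tube $F$ along it and absorb the arc into a pair of cancelable handles straddling the $S$--component (so that $h(M,F',S') \leq h(M,F,S)$); equivalently, a handle slide through $S$ produces an effective $1$--handle connecting two components of $F_-$ in a merged compression body, to which Lemma~\ref{lem:handleext} applies as before. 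Iterating then reduces $|F|$ to $1$, yielding the desired connected Seifert surface.
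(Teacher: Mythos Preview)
Your principal case is exactly the paper's argument, and it is correct. The gap is that you do not notice your ``remaining case'' never occurs. Because you start (via Lemma~\ref{lem:spanningsurf}) with a genuine Seifert surface $F$---one boundary curve on each component of $\bdry M$---the surface $F$ is non-separating in $M$: each torus $T_i\subset\bdry M$ meets $\bdry F$ in a single curve, so $T_i\setminus\bdry F$ is a connected annulus joining the two local sides of $F$ near that curve; hence every component of $F$ has both of its sides in the same component of $M\setminus F$, and since $M$ is connected, so is $M\setminus F$. Consequently the compression body $A$ is connected with $\bdry_- A = F_-$, and whenever $|F|\geq 2$ your principal case applies directly. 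Tubing preserves $\bdry F = L$ and the absence of closed components, so the non-separating property persists and the induction runs entirely within the principal case. This is precisely the paper's (very terse) proof: ``no closed components $\Rightarrow$ $F$ non-separating $\Rightarrow$ tube via Lemma~\ref{lem:handleext}.''

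Your treatment of the remaining case is not rigorous as written: phrases like ``absorb the arc into a pair of cancelable handles straddling the $S$--component'' and ``a handle slide through $S$ produces an effective $1$--handle'' would require an explicit construction verifying that the outcome is still a circular Heegaard splitting with no increase in handle number. Fortunately, once you observe that $F$ is non-separating, this case is vacuous and can simply be deleted.
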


\begin{proof}
Suppose a surface $F$ realizes $\MN(L)=h(M,\gamma_+,\xi)$ where $\xi$ is the Seifert class for $L$.  By Lemma~\ref{lem:homologicallyessential}, we may assume no component of $F$ is closed. Thus $F$ is non-separating. So now if $F$ is not connected, then by Lemma~\ref{lem:handleext} there is a sequence of tubings of $F$ that produces a connected surface that also realizes $\MN(L)$.
\end{proof}

\begin{theorem}\label{thm:bothboundslinks}
For an oriented link $L$ in $S^3$,
\[ \MN(L) \leq 2\TN(L) \leq \MN(L) + 4g_{\MN}(L). \]
\end{theorem}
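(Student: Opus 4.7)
The plan is to derive both halves of the claimed inequality directly from Theorem~\ref{thm:bothbounds} applied to the sutured manifold $(M,\gamma_+)$ where $M = S^3 \cut \nbhd(L)$ and $R_+(\gamma_+) = \bdry M$, with $\xi \in H_2(M,\bdry M)$ the homology class of a Seifert surface for $L$. Since $R_-(\gamma_+) = \emptyset$ while $R_+(\gamma_+) = \bdry M \neq \emptyset$, we have $d = d(M,\gamma_+) = 1$, so Theorem~\ref{thm:bothbounds} specializes to
\[ h(M,\gamma_+,\xi) \leq h(M,\gamma_+,0) - 2 \leq h(M,\gamma_+,\xi) + 2\chi^h_{(M,\gamma_+)}(\xi). \]
The proof then reduces to translating each quantity here into the language of $\MN$, $\TN$, and $g_\MN$.

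First I would invoke Observation~\ref{obs:TNlh} to identify $h(M,\gamma_+,0) = 2\TN(L) + 2$, and the definition of the Morse--Novikov number (together with Theorem~\ref{thm:reassign} if one wants to reconcile the toroidal-suture and $R_+$-suture viewpoints) to identify $h(M,\gamma_+,\xi) = \MN(L)$. These are exactly the identifications already used in Corollary~\ref{cor:mntn} and Corollary~\ref{cor:tnmn}. Substituting these into the left inequality above yields $\MN(L) \leq 2\TN(L)$, recovering Pajitnov's bound.

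Next I would handle the sutured-characteristic term on the right. By the definition of $g_\MN(L)$ and Lemma~\ref{lem:spanningsurf}, any surface $F$ realizing both $h(M,\gamma_+,\xi)$ and $\chi^h_{(M,\gamma_+)}(\xi)$ is a (possibly disconnected) Seifert surface for $L$ with $\chi_{(M,\gamma_+)}(F) = 2g(F) + |L| - |F|$; moreover $g_\MN(L) = g(F)$ for such $F$. The delicate point here is that $\chi^h_{(M,\gamma_+)}(\xi)$ literally equals $2g_\MN(L)$, not just $2g_\MN(L) + |L| - |F|$; this is where one uses that the previous lemma (connectivity of Seifert surfaces realizing $\MN(L)$ via tubings from Lemma~\ref{lem:handleext}) allows the minimizing $F$ to be taken connected, so $|F| = 1$ and $|L| = |\bdry F|$ is absorbed correctly, giving $\chi^h_{(M,\gamma_+)}(\xi) = 2g_\MN(L)$ as asserted in the proof of Corollary~\ref{cor:tnmn}.

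Substituting $h(M,\gamma_+,0) = 2\TN(L)+2$, $h(M,\gamma_+,\xi) = \MN(L)$, and $\chi^h_{(M,\gamma_+)}(\xi) = 2g_\MN(L)$ into the right inequality of the specialization of Theorem~\ref{thm:bothbounds} yields $2\TN(L) \leq \MN(L) + 4g_\MN(L)$, completing the argument. The only real content beyond Theorem~\ref{thm:bothbounds} itself is the bookkeeping identification of the handle sutured characteristic of the Seifert class with twice the Morse--Novikov genus; the main obstacle was established in Lemma~\ref{lem:spanningsurf}, so here the theorem is just the concatenation of Corollaries~\ref{cor:mntn} and \ref{cor:tnmn}.
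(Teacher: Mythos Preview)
Your approach matches the paper's exactly: the theorem is just the concatenation of Corollaries~\ref{cor:mntn} and \ref{cor:tnmn}, and you unpack them correctly.

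There is one flaw in your elaboration of why $\chi^h_{(M,\gamma_+)}(\xi) = 2g_{\MN}(L)$. Tubing $F$ down to a connected surface gives $|F|=1$, so Lemma~\ref{lem:spanningsurf} yields $\chi_{(M,\gamma_+)}(F) = 2g(F) + |L| - 1$, which for a multi-component link is strictly larger than $2g(F)$. In fact tubing \emph{increases} $\chi_{(M,\gamma_+)}$: the total genus is preserved while $|F|$ drops, so $2g(F)+|L|-|F|$ goes up by one per tube. Hence a connected surface is generally \emph{not} the one realizing $\chi^h_{(M,\gamma_+)}(\xi)$, and the connectivity lemma does not help here. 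What is actually needed is $|F|=|L|$, not $|F|=1$. The paper sidesteps this by \emph{defining} $g_{\MN}(L)$ to be $g(F)$ for a surface $F$ realizing both $h(M,\gamma_+,\xi)$ and $\chi^h_{(M,\gamma_+)}(\xi)$, and then reading $\chi^h_{(M,\gamma_+)}(\xi)=2g_{\MN}(L)$ off of Lemma~\ref{lem:spanningsurf} together with this definition.
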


\begin{proof}
This follows directly from Corollaries~\ref{cor:mntn} and \ref{cor:tnmn}.
\end{proof}

\begin{cor}\label{cor:tnoffiberedlinks}
If $L$ is a fibered link then
$\TN(L) \leq 2g(L)$.
\end{cor}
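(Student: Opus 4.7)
The plan is to deduce this directly from Theorem~\ref{thm:bothboundslinks}, which gives the two-sided sandwich
\[ \MN(L) \leq 2\TN(L) \leq \MN(L) + 4g_{\MN}(L). \]
For a fibered link, both $\MN(L)$ and $g_{\MN}(L)$ simplify drastically, and the bound collapses to the desired inequality. So the task reduces to justifying those two simplifications.

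First I would verify that $\MN(L) = 0$ for a fibered link $L$. By definition of fibered, the exterior $M = S^3 \cut \nbhd(L)$ fibers over $S^1$ with fiber a Seifert surface $F$ for $L$. The projection map $M \to S^1$ is then a circle-valued Morse function (in fact a submersion) on $M$ dual to the meridional class with no critical points. Translated into the handle-number language of Section~\ref{sec:chs}, the fiber $F$ together with the empty thick surface $S = \emptyset$ gives a circular Heegaard splitting $(M, F, \emptyset)$ of $(M, \gamma_+)$ in which both ``compression bodies'' are products $F \times [0,1]$, contributing no handles. Hence $h(M, \gamma_+, \xi) = 0$, where $\xi$ is the Seifert class; equivalently $\MN(L) = 0$.

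Next I would show $g_{\MN}(L) = g(L)$. A fibered link has a unique incompressible Seifert surface up to isotopy, namely the fiber $F$, and this $F$ realizes the minimal genus $g(L)$. The paper notes (citing \cite{Baker-MN} and its extension to oriented links with disconnected Seifert surfaces) that $g_{\MN}(L)$ is realized by an incompressible Seifert surface. Combining these two facts, any surface realizing the definition of $g_{\MN}(L)$ must be (a collection of copies of) the fiber, so $g_{\MN}(L) = g(F) = g(L)$. The mild subtlety here is handling the ``possibly disconnected'' allowance in the definition of $g_{\MN}$; but since the fiber $F$ is connected and uniquely incompressible, any disconnected incompressible Seifert surface would have parallel fiber components, which can be amalgamated via Lemma~\ref{lem:handleext} without increasing the handle number, so the minimum genus is still $g(L)$.

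Substituting $\MN(L) = 0$ and $g_{\MN}(L) = g(L)$ into the upper bound of Theorem~\ref{thm:bothboundslinks} gives
\[ 2\TN(L) \leq 0 + 4g(L), \]
whence $\TN(L) \leq 2g(L)$. The only step requiring genuine input is the identification $g_{\MN}(L) = g(L)$; everything else is bookkeeping from the sandwich bound and the defining property of a fibration.
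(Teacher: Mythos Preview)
Your approach is essentially identical to the paper's: both deduce the corollary from Theorem~\ref{thm:bothboundslinks} by observing that for a fibered link $\MN(L)=0$ and $g_{\MN}(L)=g(L)$, the latter because the fiber is the unique incompressible Seifert surface. One small technical slip: in a circular Heegaard splitting the thick surface $S$ cannot be empty---for the fibration you should take $S$ to be a parallel copy of $F$ inside the product $M\cut F\cong F\times I$, so that both compression bodies are trivial products; the handle count is still zero, so your conclusion $\MN(L)=0$ stands.
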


\begin{proof}
If $L$ is fibered, then $\MN(L) = 0$ and $g_\MN(L) = g(L)$.
Then Theorem~\ref{thm:bothboundslinks} implies the result.
\end{proof}

\begin{cor}\label{cor:bounds}
Suppose $\{K_k\}_{k\in \N}$ is a family of knots on which $\TN$ is unbounded.
Then at most one of $g_{\MN}$ and $\MN$ is bounded.
\end{cor}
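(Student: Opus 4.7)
The plan is to prove this as an immediate contrapositive consequence of the second inequality in Theorem~\ref{thm:bothboundslinks}, namely $2\TN(L) \leq \MN(L) + 4g_{\MN}(L)$. I would argue as follows: suppose, for contradiction, that both $\MN(K_k)$ and $g_{\MN}(K_k)$ were bounded along the family $\{K_k\}_{k \in \N}$, say by constants $N$ and $G$ respectively. Then applying Theorem~\ref{thm:bothboundslinks} to each $K_k$ gives
\[ 2\TN(K_k) \leq \MN(K_k) + 4g_{\MN}(K_k) \leq N + 4G \]
for every $k$, so $\TN(K_k) \leq (N+4G)/2$, contradicting the hypothesis that $\TN$ is unbounded on the family. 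Hence at least one of $\MN$ and $g_{\MN}$ must be unbounded on $\{K_k\}$.

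This is essentially a one-step deduction, so there is no serious obstacle — the substantive content is all contained in Theorem~\ref{thm:bothboundslinks}, which has already been established. The only care needed is to phrase the conclusion correctly: the corollary claims that \emph{at most one} of $g_{\MN}$ and $\MN$ is bounded, which is exactly the negation of ``both are bounded'', matching the contradiction derived above.
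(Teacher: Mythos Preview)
Your proof is correct and takes essentially the same approach as the paper: both deduce the corollary directly from the inequality $2\TN(K_k) \leq \MN(K_k) + 4g_{\MN}(K_k)$ of Theorem~\ref{thm:bothboundslinks}. The paper simply states this inequality and leaves the elementary conclusion implicit, while you spell out the contrapositive explicitly.
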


\begin{proof}
By Theorem~\ref{thm:bothboundslinks}, $\MN(K_k) + 4g_{\MN}(K_k) \geq 2\TN(K_k)$.
\end{proof}

\begin{cor}\label{cor:EML}
There is a family of genus one knots $\{K_k\}_{k\in \N}$ for which either $g_{\MN}$ is unbounded or $\MN$ is unbounded.
\end{cor}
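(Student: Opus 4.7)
The plan is to reduce Corollary~\ref{cor:EML} immediately to Corollary~\ref{cor:bounds}: it suffices to exhibit a family $\{K_k\}_{k\in\N}$ of genus-one knots in $S^3$ with $\TN(K_k)\to\infty$. Once such a family is in hand, Corollary~\ref{cor:bounds} says that at most one of the sequences $\{\MN(K_k)\}$ and $\{g_{\MN}(K_k)\}$ is bounded, which is exactly the stated alternative.

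The only real content is therefore the existence of a family of genus-one knots with unbounded tunnel number. The first approach I would try is untwisted Whitehead doubles $K_k = \mathrm{Wh}(C_k)$ of companions $C_k$ with $\TN(C_k)\to\infty$ (for example, $C_k = \#^k T(2,3)$, where the standard lower bound on tunnel number under connected sum gives $\TN(C_k)\geq k$). Each $K_k$ has genus one because the Whitehead pattern bounds a genus-one Seifert surface in the companion solid torus, which caps off to a genus-one Seifert surface for $K_k$ in $S^3$. To conclude $\TN(K_k)\to\infty$, I would invoke the satellite lower bound on tunnel number: the tunnel number of a satellite knot is bounded below by the tunnel number of its companion.

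The main obstacle is this last ingredient, the satellite inequality in the Whitehead-double case. Rather than reproduce the argument, I would cite the relevant result from the literature on tunnel numbers of satellite knots; alternatively, one can appeal to any explicit construction in the literature of a family of genus-one knots with unbounded tunnel number. With either route in hand, the corollary follows in a single line from Corollary~\ref{cor:bounds}, the underlying mechanism being that the inequality $2\TN(L)\leq \MN(L)+4g_{\MN}(L)$ from Theorem~\ref{thm:bothboundslinks} forces at least one of $\MN(K_k)$ and $g_{\MN}(K_k)$ to grow once $\TN(K_k)$ does.
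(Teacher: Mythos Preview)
Your reduction to Corollary~\ref{cor:bounds} is exactly right and matches the paper: one only needs a family of genus-one knots with unbounded tunnel number, and then the inequality $2\TN \leq \MN + 4g_{\MN}$ forces the alternative.

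The gap is in your construction of the family. The assertion that $\TN(\mathrm{Wh}(C)) \geq \TN(C)$, or even that $\TN(\mathrm{Wh}(C_k)) \to \infty$ when $\TN(C_k) \to \infty$, is not a standard result one can simply cite. Lower bounds on Heegaard genus across an essential torus are delicate; Schubert's inequality $b(K) \geq b(C)$ for bridge number has no known analogue for tunnel number in this generality, and your chosen companions $\#^k T(2,3)$ are not hyperbolic, so volume-based lower bounds on Heegaard genus are unavailable as well. You recognize this as ``the main obstacle,'' but you do not actually resolve it, and the literature does not hand you this inequality off the shelf.

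The paper sidesteps the issue entirely by citing a specific reference (Eudave-Mu\~noz et al., \cite{EML}) that exhibits an explicit family of genus-one knots $K_n$ with $\TN(K_n) \geq n$, constructed directly rather than as satellites. Your fallback---``appeal to any explicit construction in the literature''---is thus the actual content of the proof, and naming such a reference is what makes the argument complete.
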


\begin{proof}
By \cite[Theorem 3.7(2)]{EML}, the knots $K_n$ described in \cite[Figure 3]{EML} have $\TN(K_n)\geq n$.  One readily observes that these knots are all genus one knots.  Now apply Corollary~\ref{cor:bounds}.
\end{proof}

\begin{remark}
We expect that the knots $K_n$ described in \cite[Figure 3]{EML} have a unique incompressible Seifert surface, except possibly for small $n$.  If so, then $g_{MN}(K_n)=g(K_n)=1$ for those knots.   Corollaries~\ref{cor:bounds} and \ref{cor:EML} would then imply that $\{\MN(K_n)\}$ is unbounded.  
\end{remark}

\begin{remark}
Hirasawa-Rudolph asked for examples of knots where $\MN(K) > 2g(K)$ \cite{HirasawaRudolph}.  The only explicit examples known so far are certain pretzel knots with $g=1$ and $\MN = 4$  \cite{freegenusone}.  These knots also have $g_\MN = 1$.
\end{remark}

\begin{conj}[Cf. {\cite[Remark p440]{CTPforKnots}}]
\label{conj:gmn=g}
For an oriented link $L$ in $S^3$, $g_{\MN}(L) = g(L)$.
\end{conj}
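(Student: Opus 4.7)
The conjecture asserts the equality $g_\MN(L)=g(L)$. Since the inequality $g_\MN(L)\geq g(L)$ is essentially built into the definitions (Lemma~\ref{lem:spanningsurf} identifies realizers as Seifert surfaces), the content is to produce a minimal genus Seifert surface that realizes $h(M,\gamma_+,\xi)=\MN(L)$. My plan is to start with a surface $F$ realizing both $h(M,\gamma_+,\xi)=\MN(L)$ and $\chi_{(M,\gamma_+)}^h(\xi)=2g_\MN(L)$, and to modify it so that $g(F)$ decreases to $g(L)$ while the splitting continues to realize $\MN(L)$. By the extension of \cite{Baker-MN} to links alluded to in the excerpt, together with the connectedness lemma preceding Theorem~\ref{thm:bothboundslinks}, we may take $F$ to be an incompressible, connected Seifert surface.

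First I would fix a minimum genus Seifert surface $F_0$ for $L$ and isotope $F$ to meet $F_0$ transversely in a minimal collection of essential simple closed curves and essential arcs (the incompressibility and $\bdry$--incompressibility of both surfaces rules out trivial intersections by standard innermost disk / outermost arc arguments). If $g(F)>g(L)=g(F_0)$, I would perform a genus-reducing oriented cut-and-paste (in the spirit of the double curve sum $\dcsum$ appearing throughout the paper, or a Kakimizu-style disk exchange) using pieces of $F_0$ to produce a new incompressible Seifert surface $F'$ with $g(F')<g(F)$ and $[F']=\xi$. The remaining and crucial step is to produce a thick surface $S'$ so that $h(M,F',S')\leq h(M,F,S)=\MN(L)$; iterating would drive $g(F)$ down to $g(L)$.

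The main obstacle is exactly this last step: the handle number $h(M,F,S)$ depends on the compression-body structure of $A$ and $B$ in the decomposition along $F$, and a priori a local cut-and-paste along $F\cap F_0$ can scramble this structure. The natural plan is to carry out each disk exchange inside a product collar of an annular or disk region of $F\cap F_0$ where, analogously to Lemma~\ref{lem:productdecompgivesnondecreasinghandlenumber} and Theorem~\ref{thm:splitcompressionbody}, the modification factors as a product disk/annulus decomposition followed by an amalgamation (Lemma~\ref{lem:amalg}). Both operations are non-increasing on handle number, so if one can localize the genus reduction in this way, $\MN(L)$ is preserved and the induction closes.

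An alternative route, which avoids tracking $S$ through the modifications, is to build the splitting on $F_0$ from scratch. Decompose $(M,\gamma_+)$ along a minimum genus $F_0$ to obtain a taut sutured manifold $(M_0,\gamma_0)$, and show (using Theorem~\ref{thm:reassign} to convert between suture conventions, and Theorem~\ref{thm:lineartocircular} run in reverse via Theorem~\ref{thm:generallowerbound}) that $h(M_0,\gamma_0,0)\leq \MN(L)+2\chi_{(M,\gamma_+)}(F_0)-2\chi_{(M,\gamma_+)}^h(\xi)$; since $F_0$ minimizes $\chi_{(M,\gamma_+)}$ among Seifert surfaces, any gap would contradict $F$'s realization of both $\MN(L)$ and $\chi^h$. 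The hardest point of this variant is the very first inequality, which amounts to showing that the linear handle number of the sutured manifold obtained by cutting along $F_0$ is no larger than that obtained by cutting along a surface of possibly higher genus that happens to realize $\MN(L)$; this seems to require a genuine new idea beyond the tools already in the paper, which is presumably why the statement remains a conjecture.
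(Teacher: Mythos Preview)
The statement you are attempting to prove is labeled a \emph{conjecture} in the paper, and the paper provides no proof of it; there is nothing to compare your proposal against. You yourself recognize this in your final sentence. So the honest situation is that your proposal is not a proof but a discussion of two plausible strategies, each with a clearly identified gap that you do not close.

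Concretely: in your first approach, the step you flag as ``crucial'' --- producing a new thick surface $S'$ with $h(M,F',S')\leq h(M,F,S)$ after a genus-reducing cut-and-paste --- is exactly the missing content. The analogy with Lemma~\ref{lem:productdecompgivesnondecreasinghandlenumber} and Theorem~\ref{thm:splitcompressionbody} is suggestive but not sufficient: those results control handle number under product decompositions and under carefully engineered surfaces inside a single compression body, not under an arbitrary oriented exchange along $F\cap F_0$. There is no mechanism in the paper guaranteeing that such an exchange can be localized to product disks and annuli in the compression bodies of the splitting $(A,B;F,S)$. In your second approach, the inequality $h(M_0,\gamma_0,0)\leq \MN(L)+2\chi_{(M,\gamma_+)}(F_0)-2\chi_{(M,\gamma_+)}^h(\xi)$ is not something Theorems~\ref{thm:lineartocircular} or \ref{thm:generallowerbound} give you; those results bound $h(M,\gamma,0)$ in terms of $h(M,\gamma,\xi)$ and the sutured characteristic of the \emph{realizing} surface, not of an arbitrary minimal-genus surface. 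So both routes stall at the same essential difficulty: comparing the handle number of the decomposition along a minimal-genus surface to that along a handle-number-minimizing surface. That is precisely the open content of Conjecture~\ref{conj:gmn=g}.
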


\section{The circular handle number function on homology}
\label{sec:extension}

\subsection{Regular classes}
Recall that for a sutured manifold $(M,\gamma)$, a  class $\xi \in H_2(M,\bdry M)$ is {\em regular} if every surface representing $\xi$ meets every toroidal suture. A class is {\em non-regular} if it is not regular.  To phrase this more homologically, let $T(\gamma) \subset \bdry M$ be the toroidal sutures of $(M,\gamma)$.  Then a class $\xi$ is {\em non-regular} if $\bdry \xi \in H_1(\bdry M)$ is zero on some component of $T(\gamma)$.  This second version of the definition extends to other coefficients.  In particular, we may now speak of regular classes and non-regular classes of $H_2(M,\bdry M; \R)$.

\begin{lemma}\label{lem:nonregclasses}
Let $(M,\gamma)$ be a connected sutured manifold.
Unless $M$ is a rational homology sphere, the set of non-regular classes of $H_2(M,\bdry M; \R)$ is a finite union of subspaces of positive codimension.
\end{lemma}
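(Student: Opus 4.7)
The plan is to realize the set of non-regular classes as a finite union of kernels of linear maps $H_2(M, \bdry M; \R) \to H_1(T_i; \R)$, and then use half-lives-half-dies together with the Lagrangian property of the connecting homomorphism to see each kernel has positive codimension.

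Enumerate $T(\gamma) = \{T_1, \dots, T_n\}$. For each $i$, I would let $\pi_i \colon H_1(\bdry M; \R) \to H_1(T_i; \R)$ denote projection onto the $T_i$-summand of the natural decomposition $H_1(\bdry M; \R) = \bigoplus_k H_1(\bdry_k M; \R)$, and set $\phi_i := \pi_i \circ \bdry_*$. Unpacking the definition of regularity, a class $\xi$ is non-regular iff $\phi_i(\xi) = 0$ for some $i$, so the non-regular set equals $\bigcup_{i=1}^n \ker \phi_i$, manifestly a finite union of linear subspaces. The task thus reduces to showing each $\phi_i$ is nonzero.

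The central input is the classical fact that $L := \mathrm{image}(\bdry_*) = \ker(i_* \colon H_1(\bdry M; \R) \to H_1(M; \R))$ is a Lagrangian subspace of the symplectic vector space $V := H_1(\bdry M; \R)$ equipped with its intersection form. Isotropy is the familiar geometric observation: if $\xi_1, \xi_2$ are represented by transverse properly embedded surfaces $F_1, F_2$ in $M$, then $F_1 \cap F_2$ is a compact $1$-manifold with boundary $\bdry F_1 \cap \bdry F_2$ in $\bdry M$, whose algebraic count vanishes; the equality $\dim L = \tfrac12 \dim V$ is the content of half-lives-half-dies. The conclusion then drops out of symplectic linear algebra: writing $V = V_i \oplus V'_i$ with $V_i = H_1(T_i; \R) \cong \R^2$ and $V'_i = \ker \pi_i$ both symplectic subspaces (orthogonal with respect to the intersection form), the maximum dimension of an isotropic subspace of $V'_i$ equals $\tfrac12 \dim V'_i = \tfrac12 \dim V - 1$; if $\phi_i$ vanished, then $L \subseteq V'_i$ would be an isotropic subspace of dimension $\tfrac12 \dim V$, violating this bound.

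The hypothesis that $M$ is not a rational homology sphere eliminates the degenerate situation where this setup is trivial: if $H_1(M; \Q) = 0$, then half-lives-half-dies forces $H_1(\bdry M; \Q) = 0$, so $\bdry M$ is a (possibly empty) union of $2$-spheres, $T(\gamma) = \emptyset$, and the non-regular set is empty. I expect the main delicate step to be a careful justification of the Lagrangian property for possibly disconnected $\bdry M$, including that the orthogonal decomposition of $V$ over boundary components really is a symplectic direct sum; both facts are standard but warrant a sentence of care.
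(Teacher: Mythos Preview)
Your proof is correct and follows the same overall strategy as the paper: express the non-regular set as the finite union $\bigcup_i \ker\phi_i$ over the toroidal sutures, then show each kernel is proper. The paper indexes over all non-empty subsets of $T(\gamma)$ rather than just singletons, which is harmless redundancy. For positive codimension the arguments diverge slightly in flavor: the paper invokes half-lives-half-dies as a black box to assert that for each boundary component there exists a class $\eta_i \in H_2(M,\bdry M;\Z)$ with $\bdry\eta_i$ non-zero on it (so $\phi_i\neq 0$ immediately), and then goes further to combine these into an explicit regular class $\zeta_n$ by a sequential choice of coefficients. Your Lagrangian dimension count is a more self-contained justification of the same fact $\phi_i\neq 0$---indeed the per-component statement the paper invokes is itself typically proved via exactly the symplectic argument you give. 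The explicit construction of a regular class is not actually needed for the lemma as stated, so your route is a bit leaner; the paper's construction buys a concrete witness, which is pleasant but only used implicitly in the subsequent density lemma.
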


\begin{proof}
Since $H_2(M,\bdry M; \R) = 0$ exactly when $M$ is a rational homology sphere, let us assume $H_2(M,\bdry M; \R) \neq 0$.  
If $T(\gamma)=\emptyset$, then every non-zero class is regular and the result holds.  So now further assume $T(\gamma) \neq \emptyset$.

Let $T$ be a non-empty subset of the components of $T(\gamma)$. Then we claim that the set $V_{T}$ of classes of $H_2(M,\bdry M; \R)$ whose boundary vanishes on $T$ form a subspace.  By definition, the members of $V_{T}$ are necessarily non-regular classes.  
So now suppose $\sigma$ and $\tau$ are non-regular classes where $\bdry \sigma$ and $\bdry \tau$ are zero on $T$.
Since $\bdry (r \sigma + s \tau) = r \bdry \sigma + s \bdry \tau$ for any $r,s \in \R$, $\bdry (r \sigma + s \tau)$ is  zero on $T$.  Hence $r \sigma + s \tau \in V_{T}$.  Thus $V_{T}$ is a subspace.

Consequently, letting $\mathcal{T}$ be the collection of all non-empty subsets of components of $T(\gamma)$, the set of non-regular classes of $H_2(M,\bdry M; \R)$ is $\bigcup_{T \in \mathcal{T}} V_T$.  Since $T(\gamma)$ is finite, $\mathcal{T}$ is finite, so this is a finite union.

Finally, we claim that there exists a regular class.  By the Half-Lives Half-Dies Lemma (that half of the rank of $H_1(\bdry M;\Z)$ dies upon the inclusion of $\bdry M$ into $M$), for each component of $\bdry M$  there is a primitive class $\eta \in H_2(M,\bdry M; \Z)$ for which $\bdry \eta$ is non-zero on that component. In particular, for each torus $T_i$ of $T(\gamma)$ let $\eta_i$ be such a primitive class (where $i$ runs from $1$ to $n=|T(\gamma)|$).  Then for each $k$ from $1$ to $n$ we may successively find a non-zero integral coefficient $c_k$ so that the class $\zeta_k = \sum_{i=1}^k c_i \eta_i$ has $\bdry \zeta_k$ non-zero on each of $T_1, \dots, T_k$.  Thus $\zeta_n$ will be a regular class.
\end{proof}

\begin{lemma}\label{lem:regclasses}
For a connected sutured manifold $(M,\gamma)$, 
the set of regular classes of $H_2(M,\bdry M;\R)$ is a dense open set.
\end{lemma}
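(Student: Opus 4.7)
The plan is to derive this as an immediate consequence of the preceding Lemma~\ref{lem:nonregclasses}. That lemma identifies the non-regular classes as the finite union $\bigcup_{T \in \mathcal{T}} V_T$, where each $V_T$ is the subspace of $H_2(M,\bdry M;\R)$ consisting of classes whose boundary vanishes on a non-empty subcollection $T$ of toroidal sutures. So I just need to translate ``finite union of proper linear subspaces'' into ``complement is open and dense''.

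First I would dispose of the degenerate case in which $M$ is a rational homology sphere, so that $H_2(M,\bdry M;\R) = 0$. A quick Half-Lives-Half-Dies computation shows that any component of $T(\gamma)$ would contribute a non-trivial class to $H_2(M,\bdry M;\R)$, so in this situation $T(\gamma) = \emptyset$, every class is vacuously regular, and the conclusion is immediate.

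In the remaining case, I would apply Lemma~\ref{lem:nonregclasses} to write the set of non-regular classes as a finite union of subspaces of positive codimension in the finite-dimensional real vector space $H_2(M,\bdry M;\R)$. Each such subspace is closed, because linear subspaces of finite-dimensional real vector spaces are closed, and each has empty interior, because it is proper. A finite union of closed nowhere-dense sets is closed and nowhere-dense (either invoke the Baire category theorem or argue directly: a proper subspace has Lebesgue measure zero with respect to any norm-induced measure, and a finite union of measure-zero closed sets still has empty interior). The complement, which is precisely the set of regular classes, is therefore open and dense.

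I do not anticipate a genuine obstacle; the substantive content was already developed in Lemma~\ref{lem:nonregclasses}. The only care required is in bookkeeping the edge cases: verifying that the rational homology sphere hypothesis really does force $T(\gamma) = \emptyset$ so that Lemma~\ref{lem:nonregclasses} can be sidestepped, and observing that ``finite union of proper linear subspaces of $\R^n$'' is a standard source of closed nowhere-dense sets.
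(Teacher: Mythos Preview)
Your proposal is correct and takes essentially the same approach as the paper, which simply cites Lemma~\ref{lem:nonregclasses} in a single sentence. Your version is more detailed---explicitly handling the rational homology sphere case and spelling out why a finite union of proper linear subspaces has open dense complement---but the core idea is identical.
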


\begin{proof}
This follows from the non-regular classes being a finite union of subspaces of positive codimension, Lemma~\ref{lem:nonregclasses}.
\end{proof}


\subsection{A function on homology}
Fix a sutured manifold structure $(M,\gamma)$ on a $3$--manifold $M$. To begin, let us write $h(\xi) = h(M,\gamma,\xi)$ for an integral regular class $\xi \in H_2(M, \bdry M; \Z)$.    Theorem~\ref{thm:chfunction} examines properties of this function.  We extend $h$ to real and non-regular classes in Theorem~\ref{thm:chfunctionext}.

\begin{theorem}\label{thm:chfunction}
For a sutured manifold $(M, \gamma)$,
the handle number function 
\[h \colon H_2(M,\bdry M) \to \N\]
has the following properties for any regular classes $\alpha, \beta \in H_2(M, \bdry M)$: 
\begin{enumerate}
    \item $h(\beta) \leq N$ for some constant $N$ (depending on $(M,\gamma)$),
    \item $h(k \beta) = h(\beta)$ for any integer 
    $k >0$, and
    \item $h(\alpha+m\beta) \leq h(\beta)$ for sufficiently large $m$.
\end{enumerate}
\end{theorem}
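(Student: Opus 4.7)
The plan is to prove the three properties in order. Property (1) follows from the sutured restructuring theorem: apply Theorem~\ref{thm:reassign} placing every component of $T(\gamma)$ into $R_+(\gamma^*)$, producing a sutured structure $(M,\gamma^*)$ with $T(\gamma^*)=\emptyset$ and $T(\gamma)\subset R_+(\gamma^*)$. The final conclusion of that theorem gives $h(M,\gamma,\beta)=h(M,\gamma^*,\beta)$ for every regular class $\beta$ of $(M,\gamma)$, and Theorem~\ref{thm:lineartocircular} (applied to $(M,\gamma^*)$, which has no toroidal sutures) bounds $h(M,\gamma^*,\beta)$ by $h(M,\gamma^*,0)$. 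Setting $N=h(M,\gamma^*,0)$ gives the uniform bound.

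For (2), the inequality $h(k\beta)\leq h(\beta)$ comes from parallel copies: take a Heegaard splitting $(M,F,S)$ realizing $h(\beta)$ and fix a collar $F\times[-\epsilon,\epsilon]$; insert $k$ coherently oriented parallel copies of $F$ in the collar. The decomposition of $M$ along these $k$ copies yields $M\cut F$ together with $k-1$ product pieces $F\times I$, each of which admits a trivial linear Heegaard splitting of handle number zero. Together with $(M,F,S)$ these assemble into a circular Heegaard splitting whose thin surface represents $k\beta$ and whose handle number is $h(\beta)$. For the reverse inequality $h(\beta)\leq h(k\beta)$, I would take a well-conditioned realizer $(M,F',S')$ of $h(k\beta)$ via Theorem~\ref{thm:wellconditioned} and argue, using double curve sums with components of $R(\gamma)$ (Lemma~\ref{lem:dcsumheegsplitting}) and product decompositions (Lemma~\ref{lem:productdecompgivesnondecreasinghandlenumber}), that $F'$ can be replaced by another realizer of $h(k\beta)$ consisting of $k$ coherently oriented parallel copies of a surface representing $\beta$. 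Then $k-1$ successive amalgamations along interior copies (Lemma~\ref{lem:amalg}) reduce the thin surface to a single representative of $\beta$ without increasing the handle number, giving $h(\beta)\leq h(k\beta)$.

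For (3), fix a realizer $F$ of $h(\beta)$ with splitting $(M,F,S)$ and a conditioned decomposing surface $G$ representing $\alpha$, perturbed to be transverse to $F\cup S$. Choose a collar $F\times[-\epsilon,\epsilon]$ of $F$ and place $m$ coherently oriented parallel copies $F\times\{t_1\},\dots,F\times\{t_m\}$ in the collar; form the double curve sum $F_m=G\dcsum F\times\{t_1\}\dcsum\cdots\dcsum F\times\{t_m\}$, a properly embedded surface representing $\alpha+m\beta$. For sufficiently large $m$, $F_m$ contains many pristine parallel copies of $F$ that separate the collar into product regions of the decomposition $M\cut F_m$, so $M\cut F_m$ splits as a piece that is a bounded modification of $M\cut F$ near where $G$ is joined, together with a disjoint union of product manifolds. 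Extending $(A,B;S)$ trivially over the product pieces and absorbing the local modification near $G$ produces a Heegaard splitting of $M$ with thin surface $F_m$ and handle number at most $h(\beta)$.

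The main obstacle is the reverse inequality in (2): rigidifying a handle-number minimizer of $k\beta$ into $k$ parallel copies is a structural statement analogous to the classical fact that Thurston-norm minimizers of $k\beta$ are $k$ parallel copies of a minimizer of $\beta$, but here we need it for the handle count rather than the norm. I expect the argument to lean on the conditioning machinery of Theorem~\ref{thm:wellconditioned} together with careful application of double curve sum and amalgamation, possibly combined with induction on a suitable complexity of $F'$.
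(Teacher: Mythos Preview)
Your treatment of (1) and the easy direction $h(k\beta)\leq h(\beta)$ of (2) matches the paper. The genuine gaps are in the reverse inequality of (2) and in (3).

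For (2), you aim to replace a realizer of $h(k\beta)$ by $k$ \emph{parallel} copies of a surface in $\beta$; this is stronger than necessary and you do not carry it out. The paper instead reduces to primitive $\beta$, chooses a well-conditioned realizer $\tilde F$ with the \emph{fewest components}, and exploits this minimality. If some compression body $A_i$ of the splitting meets $\tilde F$ in more than one component, then either a $1$--handle of $A_i$ has feet in two of those components, so a tubing (Lemma~\ref{lem:handleext}) lowers $|\tilde F|$, or all such components lie in a single component of $\bdry_-A_i$, so Lemma~\ref{lem:dcsumheegsplitting} lowers $|\tilde F|$. Either way contradicts minimality, so each $A_i$ meets $\tilde F$ in a single component. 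The pieces of $M\cut\tilde F$ then make all components of $\tilde F$ mutually homologous; primitivity of $\beta$ forces each to represent $\beta$, and amalgamating along all but one component (Lemma~\ref{lem:amalg}) gives $h(\beta)\leq h(k\beta)$. No parallelism is ever proved or needed.

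For (3), your double-curve-sum scheme does not control the handle number. After summing $G$ with $m$ copies of $F$ there are no ``pristine'' copies (each is cut by $G$), and even granting the product pieces, Lemmas~\ref{lem:commutativityoftransversesumdecomp} and~\ref{lem:productdecompgivesnondecreasinghandlenumber} only reduce you to bounding the linear handle number of $M\cut F$ \emph{further decomposed along} $G\cap(M\cut F)$; there is no reason this is at most $h(M\cut F)=h(\beta)$. The paper's argument is structurally different: it works inside each component $(M_i,\gamma_i)$ of $M\cut F_\beta$ with its Heegaard surface $S_i=S_\beta\cap M_i$ and applies the proof of Theorem~\ref{thm:lineartocircular} (i.e.\ Theorem~\ref{thm:splitcompressionbody}) there to replace $F_\alpha\cap M_i$ by a surface $F_i'$ homologous to $(F_\alpha\cap M_i)+(m-1)[S_i]$ that decomposes each of $A_i,B_i$ into compression bodies of the \emph{same} handle number. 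With $F'=\bigcup_i F_i'$ one then sets $F_0=F'\dcsum F_\beta$ and $S_0=F'\dcsum S_\beta$, obtaining a circular splitting with $[F_0]=\alpha+m\beta$ and $h(M,F_0,S_0)=h(\beta)$. The step you are missing is precisely this appeal to Theorem~\ref{thm:splitcompressionbody} inside the compression bodies to keep the handle count fixed; a naive double curve sum with copies of $F$ does not achieve that.
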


\begin{proof}[Proof of Theorem~\ref{thm:chfunction}]

(1) This follows immediately from Theorem~\ref{thm:lineartocircular}  
(and Theorem~\ref{thm:reassign} in case 
 $(M,\gamma)$ has toroidal sutures).

(2) 
Let $k$ be a positive integer.  If $h(k\beta) = h(\beta)$ holds for primitive homology classes, then it holds for non-primitive ones too.  So assume $\beta$ is a primitive homology class.

Say $F$ realizes $h(\beta)$.  Then let $F_k$ be $k$ parallel copies of $F$ (in a collar neighborhood of $F$ disjoint from $S$), and let $S_k$  be $S$ along with  $k-1$ copies of $F$ interlaced between the previous stack. Then the Heegaard splitting $(M,F_k, S_k)$ has the same handle number as $(M,F,S)$.  Since $[F_k] = k[F] = k \beta$, $h(\beta) \geq h(k\beta)$.

Now for the other direction.  Among surfaces realizing $h(k \beta)$, say $\tilde{F}$ is one with the least number of components. By Theorem~\ref{thm:wellconditioned} and its proof, we may assume $\tilde{F}$ is well-conditioned. In particular, $[\tilde{F}] = k \beta$, there is a Heegaard splitting $(M, \tilde{F}, \tilde{S})$ with $h(M, \tilde{F}, \tilde{S}) = h(M, \tilde{F}) =  h(k\beta)$, $\tilde{F}$ satisfies condition (C1), and no other such surface has fewer components.
For each connected compression body $A_i$ of the compression body $\tilde{A}$ of the Heegaard splitting $(M, \tilde{F}, \tilde{S}) = (\tilde{A},\tilde{B};\tilde{F}, \tilde{S}) $, let $\tilde{F}_i = \tilde{F} \cap A_i$.  Observe that each $\tilde{F}_i$ also satisfies condition (C1) since it is a union of components of $\tilde{F}$.

If $\tilde{F}_i$ has just one component for each $i$, 
then the components of $M \cut \tilde{F}$ provide the homology between pairs of components of $\tilde{F}$.  So they all must represent the same integral homology class, say $\beta'$. Also, let $k'=|\tilde{F}|$ be the number of these components.   So then we have 
$k\beta = [\tilde{F}] = [\cup_i \tilde{F}_i] = \sum_i [\tilde{F}_i] = k' \beta'$.  Since $\beta$ is a primitive class, the Representation Theorem of \cite{MP} implies that $\tilde{F}$ has at least $k$ components, ie. $k' \geq k$. Hence $\beta = \frac{k'}{k} \beta'$ in $H_2(M,\bdry M;\Q)$.  Since $\beta$ is a primitive class and $\beta'$ is an integral class, it follows that in fact $k'=k$ and $\beta'=\beta$.  That is, each $\tilde{F}_i$ represents the primitive class $\beta$.
In this case, let $\tilde{F}_1$ be one of these components.  Then by amalgamations along all other components (Lemma~\ref{lem:amalg}), we obtain a 
circular Heegaard splitting $(M, \tilde{F}_1, \tilde{S}_1)$ (where $\tilde{S}_1$ is the surface obtained from the amalgamations) that represents $h(\beta)$.

On the other hand, suppose that, for say $i=1$, we have $|\tilde{F}_1| \geq 2$.
If $A_1$ had a minimal handle structure with a $1$-handle whose feet were in two  components of $\tilde{F}_1$, then by Lemma~\ref{lem:handleext} a tubing along the handle would create a Heegaard splitting $(M,\tilde{F}^*, \tilde{S}^*)$ realizing $h(k \beta)$ but with $|\tilde{F}^*| < |\tilde{F}|$ contrary to assumptions.  Hence the components of $\tilde{F}_1$ must all lie in the same component of $\bdry_- A_1$, and 
$A_1$ is formed from $\bdry_-A_1 \times I$ with just enough $1$--handles to connect these components of $\bdry_- A_1$.  (So  $h(A_1) = |\bdry_-A_1| -1$.)

 Say $Q_0$ is the component of $\bdry_- A_1$ containing $\tilde{F}_1$. Then $Q_0 \cut \tilde{F}_1$ must be a collection of components of $R_- \cut \tilde{F}$ 
 which meet $\tilde{F}_-$.  (If a component of $R_-\cut \tilde{F}$ does not meet $\tilde{F}_-$, then it does not belong to a component of $\bdry_- A$ that meets $\tilde{F}$.)
Since $\tilde{F}_1$ has at least $2$ components,  $\tilde{F}_1 \subset Q_0$, and $Q_0$ is connected, some component $Q$ of $Q_0 \cut \tilde{F}_1$ must meet least two components of $\tilde{F}_1$. Let $R_0$ be the component of $R_-$ containing this component $Q$.
Then Lemma~\ref{lem:dcsumheegsplitting} produces a new decomposing surface $\tilde{F}_{R_0}'$ homologous to $\tilde{F}$ and maintaining that $h(M,\tilde{F}_{R_0}') = h(M, \tilde{F}) =  h(k\beta)$, but with fewer components.  However, this is contrary to our assumptions.

(3) 
We will construct a splitting $(M,F,S)$ with $h(M,F,S) \leq h(\beta)$ such that $F$ represents $\alpha+m\beta$ for some $m >0$.

Suppose $h(\beta)$ is realized by the  Heegaard splitting $(M,F_\beta, S_\beta)$ which splits $M$ into the compression bodies $A_\beta$ and $B_\beta$.  
By Lemma~\ref{lem:homologicallyessential} we may assume that $F_\beta$ is well-conditioned.
Hence no component of either $A_\beta$ or $B_\beta$ is a handlebody. (If a component were a handlebody, then its boundary would be a component of $S_\beta$ which is a Heegaard surface for a component of $M \cut F_\beta$.  The collection of components of $F_\beta$ meeting that component of $M \cut F_\beta$ would be null homologous.)
Suppose the surface $F_\alpha$ represents $\alpha$. Isotop $F_\alpha$ to both minimally intersect and be transverse to $F_\beta \cup S_\beta$ and $\gamma$.

Let $(M_i, \gamma_i)$ for $i=1,\dots,n$ be the components of the result of the sutured manifold decomposition of $(M,\gamma)$ along $F_\beta$.  Then for each $i$, the surface $S_i = S_\beta \cap M_i$ is a Heegaard surface for $(M_i, \gamma_i)$.  Let $F_i = F_\alpha \cap M_i$.  

The proof of Theorem~\ref{thm:lineartocircular} produces a new surface $F_i'$ in $\nbhd(F_i \cup S_i)$ that is homologous there to $F_i \dcsum m_i S_i$ for sufficiently large $m_i >0$, satisfies $F_i' \cap R_\pm(\gamma_i) = F_i \cap R_\pm(\gamma_i)$, and  decomposes the compression bodies $A_i$ and $B_i$ of $M_i \cut S_i$ into compression bodies $A_i'$ and $B_i'$ of the same handle numbers.  

Choose $m$ to be larger than all of the $m_i$, and then update each $F_i'$ be the surface as above but using $m-1$ instead of $m_i$. Update $A_i'$ and $B_i'$ as well.
Then we may set $F' = \cup F_i'$ so that $[F']=[F]+(m-1)[S_\beta] = \alpha + (m-1) \beta$.
Next form the surfaces $F_0 = F' \dcsum F_\beta$ and $S_0 = F' \dcsum S_\beta$, and note that $[F_0] = [S_0] =  \alpha + m \beta$.
(Indeed, $[F_0] = [F']+[F_\beta] =\alpha + (m-1) \beta + \beta = \alpha + m \beta$.) 
Observe now that 
$(M,F_0,S_0)$ is a circular Heegaard splitting with compression bodies $A_0 = \bigcup  A_i \cup (F' \cap B_i)\times I$ and $B_0 = \bigcup B_i \cup (F'\cap A_i)\times I$.
Since $h(A_0) = \sum h(A_i) = h(A)$ and $h(B_0) =\sum h(B_i) = h(B)$, we find that $h(M, F_0, S_0) = h(M, F_\beta, S_\beta)$ which is $h(\beta)$.   Hence $h(\alpha+m\beta) \leq h(\beta)$.
\end{proof}

\begin{theorem}\label{thm:chfunctionext}
The  handle number extends to a function $h \colon H_2(M,\bdry M; \R) \to \N$ that is
\begin{enumerate}
    \item bounded by constant,
    \item constant on rays from the origin, and
    \item locally maximal.
\end{enumerate}
\end{theorem}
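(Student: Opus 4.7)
The plan is to define the extension in two stages---first to rational regular classes by scaling, then to all of $H_2(M,\bdry M;\R)$ by a limit-superior procedure---and then verify the three conclusions using Theorem~\ref{thm:chfunction} and Lemma~\ref{lem:regclasses}. For the first stage, given a rational regular class $\xi$, choose a positive integer $k$ with $k\xi$ integral (hence regular) and set $h(\xi) := h(k\xi)$; Theorem~\ref{thm:chfunction}(2) makes this independent of $k$. For the second stage, for any $\xi \in H_2(M,\bdry M;\R)$ set
\[
  h(\xi) \;:=\; \limsup_{\eta \to \xi}\, h(\eta),
\]
the limsup taken over rational regular $\eta$. Lemma~\ref{lem:regclasses} gives density of rational regular classes, Theorem~\ref{thm:chfunction}(1) provides a uniform upper bound, and the integer values of $h$ ensure the limsup is attained and lies in $\N$.

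Verifying the conclusions proceeds as follows. Property (1), boundedness, is immediate: every value of the extension is bounded by the constant from Theorem~\ref{thm:chfunction}(1). Property (2), constancy on rays: for $t>0$, scalar multiplication by $t$ is a self-homeomorphism of $H_2(M,\bdry M;\R)$ sending rational regular classes bijectively to rational regular classes and preserving $h$ on them by the first stage, hence preserving the limsup. Property (3), local maximality, is built into the limsup definition: if $h(\xi)=k$, then by definition there is a neighborhood $U$ of $\xi$ in which every rational regular $\eta$ satisfies $h(\eta) \leq k$, and then $\limsup_{\eta \to \xi'} h(\eta) \leq k$ for every $\xi' \in U$, which is local maximality.

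The technical heart of the argument is the compatibility of the two stages: for a rational regular $\xi$, the limsup in stage two must recover the stage-one value. The inequality $\limsup \geq h(\xi)$ is trivial via the constant sequence, so the content is the reverse, which leverages Theorem~\ref{thm:chfunction}(3). Given a rational regular $\eta$ near $\xi$, I would rewrite $N\eta = \alpha + M\xi_0$ for $\xi_0$ a primitive integer representative of the ray of $\xi$, an integer class $\alpha$, and a positive integer $M$, arranging regularity of $\alpha$ by a bounded integer shift along $\xi_0$---possible because $\bdry \xi_0$ is nonzero on each of the finitely many toroidal sutures. The main obstacle is ensuring that $M$ can be taken above the threshold from Theorem~\ref{thm:chfunction}(3) for the pair $(\alpha, \xi_0)$: this exploits that the rational denominator of $\eta$ tends to infinity as $\eta \to \xi$ with $\eta \neq \xi$, so by choosing $N$ to be a large enough multiple of the denominator of $\xi$ we can force $M$ to exceed the relevant threshold. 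Once this is arranged, Theorem~\ref{thm:chfunction}(3) directly yields $h(\eta) = h(N\eta) = h(\alpha + M\xi_0) \leq h(\xi_0) = h(\xi)$, which is the missing inequality.
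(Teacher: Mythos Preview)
Your overall approach matches the paper's: extend first to rational regular classes by scaling via Theorem~\ref{thm:chfunction}(2), then to all classes by a limsup over nearby rational regular classes, and read off (1)--(3) from Theorem~\ref{thm:chfunction}. The paper phrases the second stage on the sphere $S$ of rays and handles irrational regular points and non-regular points in two passes, but the mechanism is the same.

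There is, however, a real gap in your compatibility argument. You want $h(\eta)\le h(\xi_0)$ for every rational regular $\eta$ near $\xi_0$, and you propose writing $N\eta=\alpha+M\xi_0$ and invoking Theorem~\ref{thm:chfunction}(3). The difficulty is that the threshold in (3) depends on the pair $(\alpha,\xi_0)$, and as $\eta$ ranges over rational regular classes near $\xi_0$ the integral classes $\alpha$ that arise are \emph{unbounded}: for any nonzero integral $\alpha_0$ the class $\eta=\xi_0+\alpha_0/d$ lies in any prescribed neighborhood of $\xi_0$ once $d$ is large, and clearing denominators gives exactly $\alpha=\alpha_0$. Your observation that the denominator of $\eta$ (hence $N$, hence $M$) can be taken large is correct, but for fixed $\eta$ with $\eta,\xi_0$ linearly independent the equation $N\eta=\alpha+M\xi_0$ pins down $\alpha$ modulo $\Z\xi_0$ from $N$; increasing $N$ rescales $\alpha$ as well, and you give no reason why $M$ eventually dominates the moving threshold $m_0(\alpha,\xi_0)$. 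In short, nothing you have written rules out that $m_0(\alpha,\xi_0)$ grows too fast with $\alpha$ for your inequality $M\geq m_0(\alpha,\xi_0)$ to be arranged. (The paper's own proof is also terse at exactly this step---it simply asserts that property (3) yields local maximality on rational regular rays---so the point is genuinely delicate; but the explicit mechanism you describe does not close it.)

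A smaller issue: in your verification of (2), scalar multiplication by an irrational $t>0$ does not carry rational regular classes to rational regular classes, so the bijection you invoke is only available for rational $t$. The fix is to note that the limsup at $t\xi$ depends only on the ray $[\xi]$, since $h$ on rational regular classes already factors through rays and the quotient map $H_2(M,\bdry M;\R)\setminus\{0\}\to S$ is continuous.
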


\begin{proof}
Let $\beta$ be a non-zero regular class.
Since $h(k \beta) = h(\beta)$ for any positive integer $k$, we may define $h(r \beta) = h(\beta)$ for any positive real number $r$.  Hence $h$ is constant on rays from the origin in $H_2(M,\bdry M; \R)-\{0\}$ that meet integral regular classes.

Let $S$ be the quotient space of rays from the origin in $H_2(M,\bdry M; \R)-\{0\}$.  Say a point in $S$ is rational if its ray contains an integral class; otherwise say it is irrational. Similarly, say a point in $S$ is regular if its ray belongs to the regular classes of $H_2(M,\bdry M;\R)$. We just showed that $h$ is well-defined on the  rational regular points of $S$.   Observe that property (3) of Theorem~\ref{thm:chfunction} implies that $h$ is locally maximal on the  rational regular points of $S$.  That is, for any neighborhood $U$ of a given  rational regular point $b \in S$ that consists of regular points, if $a$ is a rational point $U$ then $h(a) \leq h(b)$.  We may use this property to extend $h$ to the irrational regular points of $S$ and hence to the rest of the regular classes of $H_2(M,\bdry M; \R)$.

Let $z$ be an irrational regular point of $S$.  Then define $h(z)$ so that:
For any neighborhood $U$ of $z$ consisting of regular points, there exists a neighborhood $U' \subset U$ of $z$ for which $h(z)$ is the maximum of $h$ among the rational points in $U'$.
Then we can see that $h$ is locally maximal on the regular points of $S$.  

Since the regular classes of $H_2(M,\bdry M;\R)-\{0\}$ are a dense open set by Lemma~\ref{lem:regclasses}, the regular points of $S$ also form a dense open set of $S$.  Thus we may extend $h$ to irregular points similarly to how we extended $h$ to irrational regular points.  Let $z$ be an irregular point of $S$ and define $h(z)$ as follows:  For any neighborhood $U$ of $z$, there is a neighborhood $U' \subset U$ of $z$ for which $h(z)$ is the maximum of $h$ among the regular points in $U'$.  As before, we similarly see that $h$ is locally maximal on $S$.

Consequently, with $h$ defined on $S$, we may extend $h$ to all of $H_2(M,\bdry M; \R)-\{0\}$ so that it is constant on rays.  
If $0$ is a regular class, then $h(M,\gamma,0)$ is defined and  $h(M,\gamma,\xi) \leq h(M,\gamma,0)-2d$ for any regular class $\xi$ by Theorem~\ref{thm:generallowerbound}.  Thus it follows that $h(0) \geq h(\xi)$ for any class $\xi \in H_2(M,\bdry M;\R)$ since $d\geq0$.  
If $0$ is not a regular class, then we may just define $h(0)$ to be the maximum of $h(\xi)$ among the regular classes $\xi$.   By Theorem~\ref{thm:chfunction}(1) and our extension of $h$, this maximum exists.  Hence, in either case, $h$ is bounded by $h(0)$.
\end{proof}

\begin{remark}
It may  happen that $h(M,\gamma, \xi)  \neq h(M, \gamma, -\xi)$.   Flipping the orientation of a decomposing surface $F$ may significantly change the resulting suture structure of the decomposed manifold.
\end{remark}

\begin{remark}\label{rem:productsm}
As shown in the proof of Theorem~\ref{thm:chfunctionext}, when $0$ is a regular class $h(\xi)$ is bounded above by the linear handle number of $(M,\gamma)$.  Consequently, the handle number function of a product sutured manifold is the constant function $h=0$.
\end{remark}


\section{Further remarks and questions.}

\subsection{Regarding handle numbers and faces of the Thurston norm}\label{sec:fiberedfaces}

\begin{remark}\label{rem:fiberedfaces}
For integral classes $\beta \in H_2(M,\bdry M;\Z)$, we have $h(\beta)=0$ precisely when there is a surface representing $\beta$ that decomposes $(M,\gamma)$ into a product sutured manifold.    In light of Theorems~\ref{thm:chfunction} and \ref{thm:chfunctionext}, one may regard any class $\beta \in H_2(M,\bdry M; \R)$ with $h(\beta)=0$ as a {\em fibered class} with respect to the sutured manifold structure $(M,\gamma)$.

Indeed, if $\bdry M$ is a union of tori and $\gamma$ has no annular sutures, then for $\beta \neq 0$ we have $h(\beta) = 0$ if and only if $\beta$ is a fibered class in the traditional sense (ie. $\beta$ lies in the cone over a fibered face of the Thurston norm).  When $\gamma$ has annular sutures we will have $h(\beta)>0$ for such a fibered class if $\bdry \beta$ has the same slope as an annular suture in some component of $\bdry M$.
\end{remark}

\begin{question} 
Let $(M,\gamma_+)$ be a sutured manifold where $\bdry M$ is a union of tori and $R_+(\gamma_+)=\bdry M$.
Is the handle number constant on any face of the Thurston norm ball?
\end{question}

\begin{remark}
If $M$ has tunnel number $1$, then this indeed is the case since the handle number of any regular class is bounded above by $2$ as discussed in Observation~\ref{obs:TNlh}. Since the handle number is $0$ exactly on the fibered faces (cf.\ Remark~\ref{rem:fiberedfaces}), it must be $2$ elsewhere.  Hence the handle number is constant on any face.

Exteriors of two-bridge links provide a class of such examples.  Following \cite[Theorem 28]{CompGenSatTorti}, one may determine all the fibered faces of the Thurston norm of a two-bridge link exterior and hence the handle number function.  Note that some two-bridge links, such as  $[0; 4,4,-2]$, have no fibered class, so their handle number functions are the constant function $h=2$.
\end{remark}

\begin{remark}
Let $(M,\gamma)$ be a sutured manifold where $\bdry M$ is a union of tori and $R_+(\gamma)$ is a union of essential annuli.  In particular each component of $\bdry M$ has some annular sutures of some slope.  Then the handle number function of $(M,\gamma)$ need not be constant on faces of the Thurston norm.
\end{remark}

\subsection{Handle complexities of 2nd homology classes} \label{sec:handlecomplexities}
The Thurston norm $x(\xi)$ and sutured characteristic $\chi_{(M,\gamma)}(\xi)$ measure the complexity
of any class $\xi \in H_2(M,\bdry M; \Z)$ of the sutured manifold $(M,\gamma)$ by minimizing the complexity $x$ or $\chi_{(M,\gamma)}$ among surfaces representing $\xi$.  
(Recall that for an oriented surface $F$, $x(F) = \chi_-(F)$ which is the sum of $-\chi(F_0)$ among the components $F_0$ of $F$ which are neither spheres nor disks.) 
Restricting to regular classes, we may create  
variants $x^{h}$ and $\chi_{(M,\gamma)}^{h}$ of these, the {\em handle characteristic} and {\em handle sutured characteristic} respectively,  by minimizing these complexities among surfaces representing a regular class $\xi$ that also realize $h(M,\gamma,\xi)$.

\begin{question}
Does there always exist a surface $F$ that realizes both $x^{h}(\xi)$ and $\chi_{(M,\gamma)}^{h}(\xi)$?
\end{question}

These definitions immediately imply that $x(\xi) \leq x^{h}(\xi)$ and $\chi_{(M,\gamma)}(\xi) \leq \chi^{h}_{(M,\gamma)}(\xi)$.   Lemma~\ref{lem:thurstyleqcob} implies that
$ x(\xi) \leq \chi_{(M,\gamma)}(\xi)$ and $x^{h}(\xi) \leq \chi_{(M,\gamma)}^{h}(\xi)$.
Together, we have:
\[\begin{tikzcd}
	{\chi_{(M,\gamma)}(\xi)} & {\chi_{(M,\gamma)}^{h}(\xi)} \\
	{x(\xi)} & {x^{h}(\xi)}
	\arrow["\rotatebox{90}{$\leq$}"{description}, draw=none, from=2-1, to=1-1]
	\arrow["\rotatebox{90}{$\leq$}"{description}, draw=none, from=2-2, to=1-2]
	\arrow["\leq"{description}, draw=none, from=1-1, to=1-2]
	\arrow["\leq"{description}, draw=none, from=2-1, to=2-2]
\end{tikzcd}\]

For a knot exterior $M = S^3 \cut \nbhd(K)$ we may take a sutured manifold structure $\gamma_+$ with $\bdry M = R_+$.  Then $x(F) = \chi_{(M,\gamma_+)}(F)-1=2g(F)-1$ for any Seifert surface $F$.
So for the Seifert class $\xi=[F]$, we have $x(\xi) = \chi_{(M,\gamma_+)}(\xi)-1 = 2g(K)-1$. It further follows that $x^{h}(\xi) = \chi^{h}_{(M,\gamma_+)}(\xi)-1$.
Hence, in this situation, Conjecture~\ref{conj:gmn=g} that   
$g(K) = g_{MN}(K)$  asks if either of the horizontal inequalities above --- and hence both --- are equalities.
We may extend Conjecture~\ref{conj:gmn=g} to other sutured manifolds.
\begin{question}\label{ques:handlecharvsthurstonnorm}
For a non-trivial regular class $\xi \in H_2(M,\bdry M;\Z)$ of a sutured manifold $(M,\gamma)$:
\begin{itemize}
    \item Does $\chi_{(M,\gamma)}(\xi) = \chi^{h}_{(M,\gamma)}(\xi)$?
    \item Does $x(\xi) = x^{h}(\xi)$?
\end{itemize}
\end{question}

While $x$ extends to a (pseudo)norm on $H_2(M,\bdry M;\R)$, it is not clear that $x^{h}$ behaves so well.
\begin{question}
Does the triangle inequality hold for $x^{h}$?  That is, for non-trivial regular integral classes $\alpha$ and $\beta$ such that $\alpha+\beta$ is regular, do we have
\[x^{h}(\alpha + \beta) \leq x^{h}(\alpha)+x^{h}(\beta)?\]
\end{question}
Recall that we develop the extension of $h(M,\gamma,\xi)$ to non-trivial regular classes in $H_2(M,\bdry M;\R)$ and then complete it to all classes in Section~\ref{sec:extension}.  So if there are situations where the triangle inequality holds for $x^{h}$, does it extend to a (pseudo)norm?


\bibliographystyle{alpha}
\bibliography{biblio}

\begin{thebibliography}{EMnMGRLRV21}

\bibitem[Bak21]{Baker-MN}
Kenneth~L. Baker.
\newblock The {M}orse-{N}ovikov number of knots under connected sum and
  cabling.
\newblock {\em J. Topol.}, 14(4):1351--1368, 2021.

\bibitem[EMnL99]{EML}
Mario Eudave-Mu\~{n}oz and J.~Luecke.
\newblock Knots with bounded cusp volume yet large tunnel number.
\newblock {\em J. Knot Theory Ramifications}, 8(4):437--446, 1999.

\bibitem[EMnMGRLRV21]{CompGenSatTorti}
M.~Eudave-Mu\~{n}oz, F.~Manjarrez-Guti\'{e}rrez, E.~Ram\'{\i}rez-Losada, and
  J.~Rodr\'{\i}guez-Viorato.
\newblock Computing genera of satellite tunnel number one knots and
  torti-rational knots.
\newblock {\em Osaka J. Math.}, 58(2):383--411, 2021.

\bibitem[Gab83]{Gabai-FT3M-I}
David Gabai.
\newblock Foliations and the topology of {$3$}-manifolds.
\newblock {\em J. Differential Geom.}, 18(3):445--503, 1983.

\bibitem[Gab86]{Gabai_detectingfibredlinks}
David Gabai.
\newblock Detecting fibred links in {$S^3$}.
\newblock {\em Comment. Math. Helv.}, 61(4):519--555, 1986.

\bibitem[God92]{Goda-HSforSMandMuraSum}
Hiroshi Goda.
\newblock Heegaard splitting for sutured manifolds and {M}urasugi sum.
\newblock {\em Osaka J. Math.}, 29(1):21--40, 1992.

\bibitem[God06]{Goda-circlevaluedmorsetheory}
Hiroshi Goda.
\newblock Circle valued {M}orse theory for knots and links.
\newblock In {\em Floer homology, gauge theory, and low-dimensional topology},
  volume~5 of {\em Clay Math. Proc.}, pages 71--99. Amer. Math. Soc.,
  Providence, RI, 2006.

\bibitem[HR03]{HirasawaRudolph}
Mikami Hirasawa and Lee Rudolph.
\newblock Constructions of morse maps for knots and links, and upper bounds on
  the morse-novikov number, 2003.

\bibitem[Juh06]{juhasz}
Andr\'{a}s Juh\'{a}sz.
\newblock Holomorphic discs and sutured manifolds.
\newblock {\em Algebr. Geom. Topol.}, 6:1429--1457, 2006.

\bibitem[Lac08]{LackenbyAnAlgorithmtodetermine}
Marc Lackenby.
\newblock An algorithm to determine the {H}eegaard genus of simple 3-manifolds
  with nonempty boundary.
\newblock {\em Algebr. Geom. Topol.}, 8(2):911--934, 2008.

\bibitem[MG09]{CTPforKnots}
Fabiola Manjarrez-Guti\'{e}rrez.
\newblock Circular thin position for knots in {$S^3$}.
\newblock {\em Algebr. Geom. Topol.}, 9(1):429--454, 2009.

\bibitem[MGNnRL15]{freegenusone}
Fabiola Manjarrez-Guti\'{e}rrez, V\'{\i}ctor N\'{u}\~{n}ez, and Enrique
  Ram\'{\i}rez-Losada.
\newblock Circular handle decompositions of free genus one knots.
\newblock {\em Pacific J. Math.}, 275(2):361--407, 2015.

\bibitem[MP77]{MP}
William~H. Meeks, III and Julie Patrusky.
\newblock Representing codimension-one homology classes by embedded
  submanifolds.
\newblock {\em Pacific J. Math.}, 68(1):175--176, 1977.

\bibitem[Paj10]{P}
Andrei Pajitnov.
\newblock On the tunnel number and the {M}orse-{N}ovikov number of knots.
\newblock {\em Algebr. Geom. Topol.}, 10(2):627--635, 2010.

\bibitem[Sch89]{Scharlemann_SM}
Martin Scharlemann.
\newblock Sutured manifolds and generalized {T}hurston norms.
\newblock {\em J. Differential Geom.}, 29(3):557--614, 1989.

\bibitem[Sch93]{SchultensClassificationOfHeegSpl}
Jennifer Schultens.
\newblock The classification of {H}eegaard splittings for (compact orientable
  surface){$\,\times\, S^1$}.
\newblock {\em Proc. London Math. Soc. (3)}, 67(2):425--448, 1993.

\end{thebibliography}

\end{document}